\pgfplotsset{compat=1.14}
 \newtheorem{theorem}{Theorem}[section]
 \newtheorem{corollary}[theorem]{Corollary}
 \newtheorem{lemma}[theorem]{Lemma}
\theoremstyle{definition}
\theoremstyle{remark}
\newtheorem{fact*}{Fact}
\DeclareMathOperator{\IM}{Im}
\DeclareMathOperator{\dist}{dist}
\DeclareMathOperator\im{\mathrm {Im~}}
\newcommand\dd{\mathrm d}
\newcommand{\eps}{\varepsilon}
\newcommand{\T}{\mathbb{T}}
\newcommand{\D}{\mathbb{D}}
\newcommand{\C}{\mathbb{C}}
\newcommand{\R}{\mathbb{R}}
\newcommand{\cc}[1]{\overline{#1}}
\newcommand{\abs}[1]{\left\vert#1\right\vert}
\newcommand{\nt}{\stackrel{\mathrm {nt}}{\to}}
\newcommand{\til}{\raise.17ex\hbox{$\scriptstyle\mathtt{\sim}$}}
\newcommand{\ph}{\varphi}
\newcommand\ep{\varepsilon}
\newcommand\la{\lambda}
\newcommand\beq{\begin{equation}}
\newcommand\eeq{\end{equation}}
\newcommand{\bbm}{\left[ \begin{smallmatrix}}
\newcommand{\ebm}{\end{smallmatrix} \right]}
\newcommand{\bpm}{\left( \begin{smallmatrix}}
\newcommand{\epm}{\end{smallmatrix} \right)}
\numberwithin{equation}{section}
\newlength{\Mheight}
\newlength{\cwidth}
\newcommand{\dfn}[1]{{\bf #1}\index{#1}}
\newcommand{\lapl}{\bigtriangleup}
\title[Escaping nontangentiality]{Escaping nontangentiality: Towards a controlled tangential amortized Julia-Carath\'eodory theory}
\author{
J. E. Pascoe$^\dagger$
}
\address{Department of Mathematics\\
1400 Stadium Rd\\
  University of Florida\\
 Gainesville, FL 32611}
\email[J. E. Pascoe]{pascoej@ufl.edu}
\thanks{$\dagger$ Partially supported by National Science Foundation Mathematical
Science Postdoctoral Research Fellowship  
DMS 1606260}
\author{
Meredith Sargent
}
\address{
Department of Mathematical Sciences\\
309 SCEN\\
University of Arkansas\\
Fayetteville, AR 72701
}
\email[M. Sargent]{sargent@uark.edu}
\author{
Ryan Tully-Doyle$^\ddagger$
}
\address{Department of Mathematics and Physics \\
University of New Haven\\
West Haven, CT 06516 }
\email[R. Tully-Doyle]{rtullydoyle@newhaven.edu}
\thanks{$\ddagger$ Partially supported by University of New Haven SRG Grant}
\date{\today}
\subjclass[2010]{Primary 30E20, 47A10 Secondary 47A55, 47A57}
\begin{document}

%Boundary behavior of bounded analytic functions, Julia-Caratheodory theorem, Pick Functions, Moment Problems, gamma regularity
\begin{abstract}
	Let $f: D \rightarrow \Omega$ be a complex analytic function.
	The Julia quotient is given by the ratio between the distance of $f(z)$ to the boundary of $\Omega$ and the distance 
	of $z$ to the boundary of $D.$
	A classical Julia-Carath\'eodory type theorem states that if there is a sequence tending to $\tau$
	in the boundary of $D$ along which the Julia quotient is bounded, then the function $f$
	can be extended to $\tau$ such that $f$ is nontangentially continuous and differentiable
	at $\tau$ and $f(\tau)$ is in the boundary of $\Omega.$
	We develop an extended theory when $D$ and $\Omega$ are taken to be the upper half plane which corresponds to
	amortized boundedness of the Julia quotient on sets of controlled tangential approach,
	so-called $\lambda$-Stolz regions, and higher order regularity, including but not limited to
	higher order differentiability, which we measure using $\gamma$-regularity.
	Applications are given, including perturbation theory and moment problems.
\end{abstract}

\maketitle
\tableofcontents

\section{Introduction}

	Let $D$ and $\Omega$ be open proper subsets in $\C$.
	Let $f: D \to \Omega$ be a complex analytic function.
	The \dfn{Julia quotient} of a function 
	evaluated at $z \in D$ is the ratio of distances
	\[
		J_f(z) = \frac{\dist(f(z), \partial \Omega)}{\dist(z,\partial D)}. 
	\]

	When $D=\Omega= \Pi,$ where $\Pi$ denotes the upper half plane in $\C,$ the Julia quotient is given by the formula
		\beq \label{JCUHP} \frac{\im f(z)}{\im z},\eeq
	where $\im w$ denotes the imaginary part of a complex number $w.$
	Note that Equation \eqref{JCUHP} is linear in $f$ over $\mathbb{R}$.
	
	We define a \dfn{Stolz region} in a domain $D$ at a point $\tau \in \partial D$ with aperture $M,$ denoted $S_{\tau,M}$, to be the set
		$$S_{\tau ,M} = \{z\in D|\dist (z, \partial D) \geq M \dist(z, \tau)\}.$$
	Note that for the Stolz region $S_{\tau,M}$ to be nonempty, we must have $M\leq 1.$

	In many classically important domains, complex analytic functions $f$ with the property that the Julia quotient is bounded along some sequence approaching
	to $\tau$ possess certain strong regularity -- specifically  continuity and differentiability on the closure of each 
	Stolz region, $\cc S_{\tau,M}$. Moreover, the boundary value $f(\tau)$ must be in $\partial \Omega.$ 
	G. Julia \cite{ju20} and C. Carath\'eodory \cite{car29} classically developed the theory on the unit disk. On the upper half plane, the theory was developed
	by R. Nevanlinna \cite{nev22}. Numerous modern treatments exist on domains in several variables and more rigid forms of regularity, e.g.
	\cite{ab98, jaf93, lugarnedic, amy10a, pascoePEMS, mprevisit}.

	Let $\lambda:[0,\infty)\rightarrow \R^{\geq 0}$ be a function. 
	We define a \dfn{$\lambda$-Stolz region at $\tau$} to be the set 
	\begin{align*} 
		S^\lambda_{\tau}= \big\{z\in D |  &\dist (z, \partial D) \geq \lambda(C), \\
		&\sqrt{\dist(z, \tau)^2 -\dist (z, \partial D)^2} \leq C, \text{for some } C>0\big\}.
	\end{align*}
	Note that a classical Stolz region with parameter $0 < M \leq 1$
	is a $\lambda$-Stolz region where $\lambda(t) = \left(\sqrt{\frac{M^2}{1-M^2}}\right)t.$
	However, we will be particularly interested in the case where $\lambda(t)$ is $o(t).$
	
	A rudimentary tangential Fatou type theory has been developed for certain $\la$-Stolz regions by
	Nagel, Rudin, and Shapiro \cite{nrs1982} and Nagel and Stein \cite{ns84},
	in spite of a failure for the presumably tame class of bounded analytic functions as noted
	by Littlewood \cite{littlewood1927} and Zygmund \cite{zygmund49} classically. 
	For a survey, see \cite{ks17}.

\begin{figure}
	\caption{A classical Stolz region is depicted in blue overlayed on a $e^{-1/t}$-Stolz region depicted in red. Since the 
	$e^{-1/t}$-Stolz region contacts the boundary much more tightly than the classical Stolz region, (amortized) boundedness of the Julia quotient there implies significantly more regularity.}
	\begin{tikzpicture}[scale=.7]
    \begin{axis}[
        domain=-1:1, %here is where you mess with the domain. Change here and in the next line
        xmin=-.5, xmax=.5,
        ymin=-0, ymax=.13, %vhange the range here
        samples=401,
        axis y line=none,
        axis x line=middle,
	ticks=none
    	]
        %\addlegendentry{Stolz region}
        \addplot+[mark=none, black, fill=Red] {e^(-(1/abs(x)))};
        %\addlegendentry{$e^{-1/|t|}$-Stolz}
	\addplot+[mark=none, black, fill=Blue] {abs(x)/e};
    \end{axis}
\end{tikzpicture}
\end{figure}
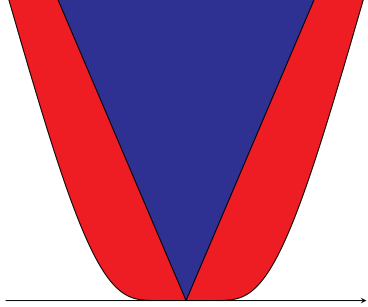

Let $f: \Pi \rightarrow \cc \Pi$ be an analytic function.
Define the \dfn{Nevanlinna measure} $\mu_f$ corresponding to $f$ as the weak limit
of the measures (in $x$) given by $\im f(x+iy)$ as $y \rightarrow 0.$ Where it is unambiguous,
we will often use drop the $f$ and write $\mu.$ The measure $\mu_f,$ with some additional scalar data, can be used to recover $f$ as in Theorem 
\ref{nevrep}.

Let $\gamma: [0,\infty)\rightarrow \mathbb{R}^{\geq 0}$ be a monotone increasing function such that $\gamma(t)$ is $O(t^2)$ as $t \to 0$.
We say an analytic function $f: \Pi \rightarrow \cc \Pi$
is \dfn{$\gamma$-regular at $\tau$} whenever there exists a $C>0$ such that $\frac{1}{\gamma(C|t-\tau|)}$ is integrable on a neighborhood of $\tau$
with respect to the Nevanlinna measure $\mu_f.$ Integrability of certain functions against $\mu_f$ is classically important, e.g. \cite{nev22}.
For any function which is $\gamma$-regular at $\tau$, there exists a $C>0$ such that $f$ must be bounded on sets of the form
$S_\tau^{D\gamma(Ct)}\cap B(\tau,1/D)$ for all $D > 0.$ Moreover, as $D \rightarrow \infty,$ the value on these sets must
go to the nontangential value $f(\tau),$ as is demonstrated in Theorem \ref{horotheorem}. We give an analysis and interpretation of $\gamma$-regularity in Section \ref{pitting}.

To understand the relationship between boundedness of the Julia quotient on $\la$-Stolz regions and $\gamma$-regularity,
one must tame the chaotic behavior of $f$ on the tangential part of the $\la$-Stolz region by averaging or \emph{amortizing} the Julia quotient. Given a $\la$-Stolz region $S_\tau^\la$ at $\tau$, we consider the behavior of the average value of $J_f$ along arcs of constant distance $\la(d)$ from $\partial D$ in $S_\tau^\la$ as $d \to 0$. 

\begin{figure}
\caption{An upper half plane $\la$-Stolz region with a representative curve $C_d$.}
\begin{tikzpicture}[scale = .7]
    \begin{axis}[
        domain=-1:1,
        xmin=-1, xmax=1,
        ymin=0, ymax=.3,
        samples=201,
        axis y line=center,
        axis x line=bottom,
        legend style={at={(1,.7)},anchor=north west}, ticks=none
    	]
        \addplot+[mark=none, thick, black, fill=Goldenrod, opacity = .2] {e^(-1/abs(x))};
        \node[label={0:{$(d,\lambda(d))$}},circle,fill,inner sep=2pt] at (axis cs:.5,e^-2) {};
        \draw [dashed, thick] (-.5,e^-2) -- (.5,e^-2);
        \node[label={90:{$C_d$}}] at (axis cs:-.25, e^-2) {};
    \end{axis}
\end{tikzpicture}
\end{figure}
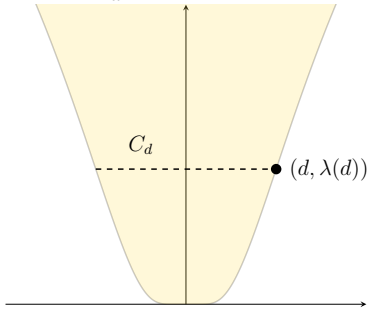

For a given distance $d > 0$, let 
$$C_d = \{z : \dist(z, \partial D) = \la(d), \sqrt{\dist(z, \tau)^2 -\dist (z, \partial D)^2} \leq d\}
.$$ (Essentially, the arcs $C_d$ decompose $S_\tau^\la$.) The \dfn{amortized Julia quotient} of $f$ with respect to $\lambda$ at $\tau$, denoted  $AJ_{f,\lambda}^\tau(d)$, is defined as
\begin{equation}\label{defAJ}
	AJ_{f,\lambda}^\tau(d) = \frac{1}{\abs{C_d}} \int_{C_d}  J_{f}(z) \, \dd|z|,	
\end{equation}
where $\abs{C_d}$ denotes arclength.

To determine $\gamma$-regularity, one needs to determine the existence
of a $\lambda$ such that the amortized Julia quotient $AJ_{f,\la}^\tau(d)$ is
well-behaved and also has the property of being a so-called \emph{$\gamma$-augury}.

We say a function $\lambda: [0,\infty) \rightarrow \mathbb{R}^{\geq 0}$ such that $\lambda(t)$ is $o(t)$ as $t\rightarrow 0$ is a \dfn{$\gamma$-augury} if there exists a $C>0$ such that
    $$\frac{t\lambda(Ct)\dd\gamma(t)}{\gamma(t)^2}$$
is integrable on $[0,1).$ (Here $\dd\gamma(t)$ denotes the distributional derivative of $\gamma.$)

In particular, a $t^n$-augury must have
    $$\frac{t\lambda(t)nt^{n-1}}{t^{2n}} = \frac{\lambda(t)}{t^n}$$
integrable. So, for example, $\lambda(t) = t^{n-1+\varepsilon}$ is a $t^n$-augury.

\begin{theorem}\label{mainthm}
Let $\gamma$ be  $O(t^2).$
\begin{enumerate}
\item An analytic function
$f: \Pi \rightarrow \cc \Pi$
is $\gamma$-regular
if and only if there exists 
a $\gamma$-augury $\lambda$ such that
$AJ_{f,\la}^\tau(d)$ is bounded as $d \rightarrow 0.$
\item In addition to (1), suppose that $\sqrt{\gamma(t)}$ is a 
        $\gamma$-augury.
	Let $f: \Pi \rightarrow \cc \Pi$ be an analytic function. The following are equivalent:
	\begin{enumerate}
		\item the function $f$ is $\gamma$-regular,
		\item $J_{f}(z)$ is bounded on 
                $S^{\sqrt{D\gamma(Ct)}}_{\tau}$ as $z\rightarrow 0$ for some $C,D>0,$
		\item
		$$\limsup_{t\rightarrow 0} J_{f}(\tau+ t+ iD\gamma(C|t|)) < \infty.$$
	\end{enumerate}
\item In addition to (1), suppose that $\gamma(t)/t$ is a 
        $\gamma$-augury.
            An analytic function
                $f: \Pi \rightarrow \cc \Pi$
                is $\gamma$-regular
                if and only if
                $AJ_{f,\gamma(Ct)/Ct}^\tau(d)$ is bounded
                %on 
                %$S^{\gamma(Ct)/Ct}_{\tau}$
                as $d\rightarrow 0$ for some $C>0.$
\end{enumerate}
\end{theorem}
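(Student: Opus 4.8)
The plan is to push everything through the Nevanlinna measure $\mu=\mu_f$ of $f$. After normalizing $\tau=0$ by a translation, the Nevanlinna representation (Theorem~\ref{nevrep}) gives
\[
J_f(x+iy)=b+\int_{\R}\frac{d\mu(t)}{(t-x)^2+y^2},\qquad b\ge 0,\ \int\tfrac{d\mu(t)}{1+t^2}<\infty,
\]
so that, since $C_d=\{x+i\lambda(d):|x|\le d\}$ has length $2d$, Tonelli's theorem yields
\[
AJ^{0}_{f,\lambda}(d)=b+\int_{\R}K_d(t)\,d\mu(t),\qquad K_d(t)=\frac{1}{2d\,\lambda(d)}\Big(\arctan\tfrac{d-t}{\lambda(d)}+\arctan\tfrac{d+t}{\lambda(d)}\Big).
\]
I would run the whole argument on three elementary kernel bounds: $K_d(t)\le\frac{\pi}{2d\lambda(d)}$ for all $t$; $K_d(t)\ge\frac{c_0}{d\lambda(d)}$ for $|t|\le d/2$ once $\lambda(d)/d$ is small (possible because $\lambda(d)=o(d)$); and $K_d(t)\le C/t^2$ for $|t|\ge 2d$ (from $\arctan s\le s$ and $t^2-d^2+\lambda(d)^2\ge\tfrac34 t^2$), hence $K_d(t)\le C_\varepsilon/(1+t^2)$ for $|t|\ge\varepsilon$, $d<\varepsilon/2$. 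The other ingredient is the layer-cake identity (Fubini applied to $\frac{1}{\gamma(C|t|)}=\int_{\gamma(C|t|)}^{\infty}u^{-2}\,du$), valid for increasing $\gamma$ with $\gamma(0)=0$ and $\nu(r):=\mu([-r,r])$:
\[
\int_{|t|<\varepsilon}\frac{d\mu(t)}{\gamma(C|t|)}=\frac{\nu(\varepsilon)}{\gamma(C\varepsilon)}+\int_0^\varepsilon\frac{\nu(r)\,d_r\gamma(Cr)}{\gamma(Cr)^2}.
\]

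For part (1), $(\Leftarrow)$: if $\lambda$ is a $\gamma$-augury with constant $C_\lambda$ and $AJ^0_{f,\lambda}$ is bounded, the lower kernel bound forces $\nu(r)\le C_4\,r\,\lambda(2r)$ for small $r$; inserting this into the layer-cake identity and substituting $v=Cr$ turns the tail integral into $\frac{C_4}{C}\int_0^{C\varepsilon}\frac{v\,\lambda(\frac{2}{C}v)\,d\gamma(v)}{\gamma(v)^2}$, finite once $C=2/C_\lambda$, so $f$ is $\gamma$-regular. $(\Rightarrow)$: $\gamma$-regularity (with constant $C_0$) forces $\mu(\{0\})=0$ and, from the identity or from $\nu(r)\le\gamma(C_0 r)\int_{|s|\le r}\frac{d\mu(s)}{\gamma(C_0|s|)}$, $\nu(r)=o(\gamma(C_0r))=o(r^2)$. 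If $\mu$ vanishes near $0$ the conclusion is immediate (take any positive $o(t)$ $\gamma$-augury, e.g.\ $\lambda=\gamma^2$); otherwise set $\lambda(d):=\nu(2d)/d$. Then $\lambda(d)=o(d)$; $\lambda$ is a $\gamma$-augury because $t\,\lambda(C_1t)=\nu(2C_1t)/C_1$ and, by monotonicity of $\nu$ and the substituted layer-cake identity, $\int_0 \frac{\nu(2C_1t)\,d\gamma(t)}{\gamma(t)^2}<\infty$ once $2C_1\le 1/C_0$; and, splitting $\int K_d\,d\mu=\int_{|t|\le 2d}+\int_{2d<|t|<\varepsilon}+\int_{|t|\ge\varepsilon}$, the first piece is $\le\frac{\pi}{2}\,\frac{\nu(2d)}{d\,\lambda(d)}=\frac{\pi}{2}$ by the very choice of $\lambda$, the second is $\lesssim\big(\sup_{|t|<\varepsilon}\tfrac{\gamma(C_0|t|)}{t^2}\big)\int_{|t|<\varepsilon}\tfrac{d\mu(t)}{\gamma(C_0|t|)}<\infty$, and the third is $\le C_\varepsilon\int\frac{d\mu(t)}{1+t^2}<\infty$.

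For parts (2) and (3) I would feed the \emph{prescribed} choices of $\lambda$ into this same machine. For (3): $\lambda(t)=\gamma(Ct)/(Ct)$ is $o(t)$ (the hypothesis forces $\gamma=o(t^2)$) and a $\gamma$-augury (precisely the hypothesis, after rescaling $C$), so the $(\Leftarrow)$ of (1) applies; the $(\Rightarrow)$ argument goes through with this $\lambda$ as soon as $C\ge 2C_0$, since then the near piece is $\le\frac{\pi C C_3}{2}\,\frac{\gamma(2C_0 t)}{\gamma(Ct)}\le\frac{\pi C C_3}{2}$, the other two pieces unchanged. For (2) the prescribed $\lambda$ is $\sqrt{D\gamma(Ct)}$, which is a $\gamma$-augury for every $C,D>0$ exactly because $\sqrt{\gamma}$ is one; this gives the chain
\[
(a)\ \Longrightarrow\ (b)\ \Longrightarrow\ \bigl[\,AJ^{0}_{f,\sqrt{D\gamma(C\cdot)}}(d)\ \text{bounded}\,\bigr]\ \Longrightarrow\ (a),
\]
where the first arrow is a three-region split of $\int\frac{d\mu(t)}{(t-x)^2+D\gamma(C|x|)}$ with $C=2C_0,\ D=1$ (bounding $\{|t|\le|x|/2\}$ via $\lesssim x^{-2}\nu(|x|/2)\lesssim1$, $\{|x|/2<|t|<2|x|\}$ via $\le\nu(2|x|)/\gamma(2C_0|x|)\le C_3$, $\{2|x|\le|t|<\varepsilon\}$ via $\lesssim\big(\sup\tfrac{\gamma(C_0|t|)}{t^2}\big)\int\tfrac{d\mu}{\gamma(C_0|\cdot|)}$, and the far tail via $\int\tfrac{d\mu}{1+t^2}<\infty$; this step uses only $\gamma=O(t^2)$), the second is immediate since $C_d\subset S^{\lambda}_0$, and the third is the $(\Leftarrow)$ of (1). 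Finally I would insert $(c)$: $(c)\Rightarrow(b)$ because $D\gamma(C|x|)\le\sqrt{D\gamma(C|x|)}$ for small $|x|$ and $J_f$ decreases in height, and $(b)\Rightarrow(c)$ by observing that $(b)$ bounds $\mu(\{|t-x|<\sqrt{D\gamma(C|x|)}\})$ by a constant times $\gamma(C|x|)$, so that splitting $\int\frac{d\mu(t)}{(t-x)^2+D'^2\gamma(C'|x|)^2}$ at $|t-x|=\sqrt{D\gamma(C|x|)}$ leaves a far piece controlled by $(b)$ itself and a near piece $\le\,\mathrm{const}\cdot\frac{\gamma(C|x|)}{\gamma(C'|x|)^2}$.

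The hard part will be this last bound: it requires producing $C'\gg C$ with $\gamma(C'|x|)^2\gtrsim\gamma(C|x|)$ as $x\to0$, i.e.\ $\gamma$ must increase by at least a bounded-below factor when its argument is scaled up (a mild anti-doubling). This fails for $\gamma\asymp t^\alpha$ — and those $\gamma$ indeed fail the hypothesis — but it is forced by ``$\sqrt{\gamma}$ is a $\gamma$-augury'': were $\gamma$ essentially doubling, integration by parts would give $\int_0^1\frac{t\sqrt{\gamma(Ct)}\,d\gamma(t)}{\gamma(t)^2}\asymp\int_0^1\frac{dt}{\sqrt{\gamma(t)}}=+\infty$ (since $\sqrt{\gamma(t)}=O(t)$), contradicting the hypothesis. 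Extracting the quantitative lower bound on $\gamma(C'|x|)/\sqrt{\gamma(C|x|)}$ from the augury integral — the same mechanism that legitimizes calling $\sqrt{D\gamma(C\cdot)}$ and $\gamma(C\cdot)/(C\cdot)$ auguries in the first place — is the real content; the kernel estimates, Tonelli, the layer-cake identity, and the region splits are routine.
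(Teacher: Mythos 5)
Your Parts (1) and (3), and most of Part (2), are correct and follow essentially the paper's own route: your kernel bounds for $K_d$ are Lemma \ref{JQint} plus the augur lemma (Lemma \ref{AJmuBound}), your layer-cake identity is the computation in Theorem \ref{mainthmPrime}, and your choice $\lambda(d)=\nu(2d)/d$ is exactly the paper's $\mu(-2d,2d)/d$ (your derivation that $\nu(r)=o(r^2)$ is in fact cleaner than the paper's Lemma \ref{isgammaaugury}, and keeping the full Nevanlinna measure with a $1/(1+t^2)$ tail bound instead of first splitting $f=f_{trivial}+f_{red}$ is an immaterial bookkeeping difference). Your three-region split for (a)$\Rightarrow$(b) and your bound $\nu(2d)/\gamma(Cd)\leq\mathrm{const}$ for the prescribed augury $\gamma(Ct)/Ct$ reproduce Parts (1) and (2) of Theorem \ref{pittingtheorem}.

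The genuine gap is the implication (b)$\Rightarrow$(c) in Part (2), which you yourself flag but do not close. Your reduction is right: at a point $x+iD'\gamma(C'|x|)$ the near-mass term is of size $\mu\bigl(x-y,x+y\bigr)/y^2\lesssim\gamma(C_0|x|)/\gamma(C'|x|)^2$, so boundedness on the low curve requires a comparability of the form $\liminf_{t\to0}\gamma(C't)^2/\gamma(C_0t)>0$ for some $C'$, and point-mass measures $\mu=\sum a_n\gamma(C_0t_n)\delta_{t_n}$ show this comparability is genuinely necessary, not an artifact of your estimates. What you offer in its place is only a heuristic (the integration-by-parts observation that an essentially doubling $\gamma$ would make $\int t\sqrt{\gamma(Ct)}\,\dd\gamma(t)/\gamma(t)^2$ diverge); until you extract the quantitative lower bound on $\gamma(C't)^2/\gamma(C_0t)$ from the hypothesis that $\sqrt{\gamma}$ is a $\gamma$-augury, your chain (a)$\Rightarrow$(b)$\Rightarrow$(c) is incomplete and Part (2) is not proved. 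For comparison, the paper does not supply such an estimate either: in Corollary \ref{supercor} the equivalence of (b) and (c) is dispatched with the single remark that $J_f$ is monotone on vertical lines, which is precisely your easy direction (c)$\Rightarrow$(b); so you have not overlooked a trick available in the paper's argument, but as a standalone proof yours (like any complete one) must actually establish the missing comparability lemma, or else prove (a)$\Rightarrow$(c) by some other mechanism.
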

Theorem \ref{mainthm} corresponds $\gamma$-regularity with boundedness on a certain
$\la$-Stolz region. In general, as is in the case of Part (1), the choice of $\la$ evidently depends intensely on the function being analyzed. 
    Part (2) immediately gives a strengthening of our main result for functions with rapid decay, for example $\gamma(t)=e^{-1/t}.$ For such $\gamma,$ the amortization procedure is unnecessary. We view this
as a non-amortized or ``perfect'' Julia-Carath\'eodory type theorem.
The ``perfect" nature of Part (2) allows it to be carried to the disk,
as in Corollary \ref{supercordisk}.
 In
Part (3), which represents a weaker regime than Part (2), we obtain the ability to preordain the choice of $\gamma$-augury, as opposed being forced to artisanally construct a $\gamma$-augury $\lambda$.

Theorem \ref{mainthm} is proven in several parts. Part (1) is proven as Theorem \ref{mainthmPrime}. Part (2) is given in Corollary \ref{supercor}. Part (3) is given in Corollary \ref{uniformcor}.

\section{Motivation and application}

	{
	When $D=\Omega= \D,$ the unit disk in $\C,$ the Julia quotient is given by the formula
		$$\frac{1-|\ph(z)|}{1-|z|}.$$
	}
	{
	A set $S \subset D$ is \dfn{nontangential} at $\tau \in \partial D$ if  $S \subseteq S_{\tau, M}$ for some value of $M.$
	A statement is said to hold \dfn{nontangentially} at $\tau$ if it is true for all $S_{\tau, M}.$}

{
The original theorems of Julia and Carath\'eodory extend Fatou's result on the existence of nontangential boundary limits to describe when an analytic function $\ph: \D \to \cc \D$ has a conformal linear approximation at a boundary point $\tau \in \D$. 
\begin{theorem}[Julia-Carath\'eodory]\label{jc}
Let $\ph: \D \to \cc\D$ be an analytic function. Let $\tau$ be a point in $\T = \partial \D$. The following are equivalent:
\begin{enumerate}
    \item There exists a sequence $\la_n \subset \D$ tending to $\tau$ such that $J_\ph(\la_n)$ is bounded as $\la_n \to \tau$;
    \item for every sequence $\la_n$ in $\D$ tending to $\tau$ nontangentially, the sequence $J_\ph(\la_n)$ is bounded;
    \item the function $\ph$ has a conformal linear approximation near $\tau$. That is, there exist an $\omega = \ph(\tau) \in \T$ and an $\eta = \ph'(\tau) \in \C$ so that 
    \[
    \ph(z) = \omega + \eta(z - \tau) + o(\abs{z - \tau}).
    \]
    as $\la \nt \tau$.
\end{enumerate}
\end{theorem}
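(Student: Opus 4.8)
The plan is to run the cycle $(3)\Rightarrow(1)$, $(2)\Rightarrow(1)$, $(1)\Rightarrow(2)$, $(1)\Rightarrow(3)$. The first two are one-liners: $(2)\Rightarrow(1)$ by picking any nontangential sequence (e.g.\ $\la_n=r_n\tau$), and $(3)\Rightarrow(1)$ by evaluating the expansion along $z=r\tau$ and using $\abs{1+w}=1+\re w+O(\abs w^2)$, which gives $1-\abs{\ph(r\tau)}=(1-r)\re(\cc\omega\eta\tau)+o(1-r)$, hence $J_\ph(r\tau)\to\re(\cc\omega\eta\tau)<\infty$. The engine for the two substantive implications, $(1)\Rightarrow(2)$ and $(1)\Rightarrow(3)$, is \emph{Julia's Lemma}, which I would derive by feeding the given sequence into the two-point Schwarz--Pick inequality; throughout I assume $\ph$ is non-constant, the case of a unimodular constant being trivial.

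For Julia's Lemma I would pass to a subsequence so that $J_\ph(\la_n)\to\alpha:=\liminf_{z\to\tau}J_\ph(z)<\infty$ and $\ph(\la_n)\to\omega$; since $1-\abs{\ph(\la_n)}=J_\ph(\la_n)(1-\abs{\la_n})\to0$, the point $\omega$ lies on $\T$. Dividing the symmetric Schwarz--Pick inequality $\tfrac{(1-\abs{\ph(z)}^2)(1-\abs{\ph(w)}^2)}{\abs{1-\cc{\ph(w)}\ph(z)}^2}\ge\tfrac{(1-\abs z^2)(1-\abs w^2)}{\abs{1-\cc w z}^2}$ by $1-\abs w^2$, setting $w=\la_n$, and letting $n\to\infty$ produces the horodisk bound
\[ u(z):=\frac{1-\abs{\ph(z)}^2}{\abs{\omega-\ph(z)}^2}\ \ge\ \frac{1}{\alpha}\,\frac{1-\abs z^2}{\abs{\tau-z}^2}=:\frac{1}{\alpha}\,P_\tau(z),\qquad z\in\D. \]
Since $P_\tau(z)\to\infty$ nontangentially, $u(z)\to\infty$ and hence $\ph(z)\to\omega$ nontangentially; and combining this bound with $\abs{\omega-\ph(z)}\ge1-\abs{\ph(z)}$ shows $1-\abs{\ph(z)}\le 2\alpha\,\abs{\tau-z}^2/(1-\abs z)$, which on any Stolz region $S_{\tau,M}$ (where $\abs{\tau-z}=\abs{1-\cc\tau z}$ and $\abs{\tau-z}\le M^{-1}(1-\abs z)$) forces $J_\ph$ to be bounded; this is $(1)\Rightarrow(2)$.

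The crux is $(1)\Rightarrow(3)$: I must promote the one-sided, real-part-only inequality of Julia's Lemma to the two-sided asymptotic $\tfrac{\omega-\ph(z)}{\tau-z}\to\eta$. Let $A^*$ be the sharp constant in $u\ge (A^*)^{-1}P_\tau$ (so $A^*\le\alpha$), and pass to the holomorphic functions $G=\tfrac{\omega+\ph}{\omega-\ph}$ and $H=\tfrac{\tau+z}{\tau-z}$ with $\re G=u$, $\re H=P_\tau$. The function $G-(A^*)^{-1}H$ has nonnegative real part, so the Herglotz--Riesz representation (the disk avatar of Theorem~\ref{nevrep}) gives
\[ G(z)=i\beta+\frac{1}{A^*}\,H(z)+\int_\T\frac{\zeta+z}{\zeta-z}\,d\nu(\zeta) \]
with $\beta\in\R$ and $\nu\ge0$ finite, and minimality of $A^*$ forces $\nu(\{\tau\})=0$. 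Dividing by $H(z)$ and letting $z\to\tau$ nontangentially, the $(A^*)^{-1}$-term survives and the rest dies: $\tfrac1{H(z)}\int_\T\tfrac{\zeta+z}{\zeta-z}\,d\nu=\int_\T\tfrac{(\tau-z)(\zeta+z)}{(\tau+z)(\zeta-z)}\,d\nu\to\nu(\{\tau\})=0$ by dominated convergence, the integrand being null pointwise for $\zeta\ne\tau$ and uniformly bounded because $\abs{\tau-z}/\abs{\zeta-z}$ is bounded on Stolz regions. Hence $G(z)/H(z)\to(A^*)^{-1}$ nontangentially, and unwinding $\omega-\ph=2\omega/(G+1)$ and $\tau-z=2\tau/(H+1)$ turns this into
\[ \frac{\omega-\ph(z)}{\tau-z}=\frac{\omega}{\tau}\cdot\frac{H(z)+1}{G(z)+1}\ \longrightarrow\ A^*\omega\cc\tau\ =:\ \eta,\qquad z\nt\tau, \]
which is the expansion of $(3)$, with $\abs\eta=A^*$ and (comparing along $z=r\tau$) $A^*=\alpha$. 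Finally, a Cauchy estimate on disks of radius $\tfrac12(1-\abs z)$ — legitimate since $\ph$ and $g(z):=\tfrac{\ph(z)-\omega}{z-\tau}-\eta$ are bounded on Stolz regions and $g\to0$ nontangentially — shows $(z-\tau)g'(z)\to0$, hence $\ph'(z)=\eta+g(z)+(z-\tau)g'(z)\to\eta$ nontangentially, so the $\eta$ in $(3)$ is literally the nontangential derivative.

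I expect the genuine obstacle to be exactly this promotion step: Julia's Lemma delivers only a ratio of real parts (a ratio of moduli), and recovering the full complex slope $\eta$ requires the integral representation together with the dominated-convergence estimate, whose domination hypothesis is precisely where nontangentiality enters and cannot be dropped. Phrased in half-plane coordinates, as in Nevanlinna's original treatment, this is the assertion that a self-map $F$ of $\Pi$ with $\liminf\im F(w)/\im w<\infty$ has $F(w)/w$ tending to that $\liminf$ along every Stolz approach to $\infty$ — the prototype for the amortized, controlled-tangential refinements developed in the rest of the paper.
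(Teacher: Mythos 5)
Your proposal is correct, and in fact the paper offers no proof of Theorem \ref{jc} to compare it with: the result is quoted as classical background, with the argument left to Julia \cite{ju20}, Carath\'eodory \cite{car29}, and Nevanlinna \cite{nev22}. What you have written is essentially that classical argument, and it dovetails with the paper's own toolkit: your horodisk bound obtained by passing to the limit in the two-point Schwarz--Pick inequality is exactly the Julia inequality \eqref{thejuliainequality} that the paper invokes before Corollary \ref{supercordisk}, and your Herglotz--Riesz step for $G-(A^*)^{-1}H$ is the disk avatar of the Nevanlinna representation, Theorem \ref{nevrep}, which drives the rest of the paper; so your write-up supplies precisely the two ingredients the paper outsources to the literature. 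The substantive steps check out: $\alpha>0$ automatically for nonconstant $\ph$, minimality of $A^*$ does force $\nu(\{\tau\})=0$, the domination bound $\abs{\tau-z}/\abs{\zeta-z}\le M^{-1}$ on $S_{\tau,M}$ is the correct point where nontangentiality enters, and the radial comparison correctly identifies $A^*=\alpha$, so $\abs{\eta}=\liminf J_\ph$. Two small loose ends, neither fatal: (i) a constant $\ph$ with $\abs{\ph}<1$ is not covered by your stated trivial case, but then (1), (2), (3) all fail and the equivalence is vacuous (and when (1) holds a constant must be unimodular); also the sequence realizing $\alpha$ need not be a subsequence of the one given in (1) --- just choose a new sequence attaining the $\liminf$ with $\ph(\la_n)$ convergent; (ii) the final Cauchy-estimate step silently uses the geometric fact that the disks $D\bigl(z,\tfrac12(1-\abs z)\bigr)$, $z\in S_{\tau,M}$, lie in a larger Stolz region $S_{\tau,M'}$ (with $M'=M/(2+M)$ in the paper's convention), on which $g$ is bounded and tends to $0$; stating that containment is what makes $\sup_{D(z,\frac12(1-\abs z))}\abs g\to 0$, hence $(z-\tau)g'(z)\to 0$, legitimate.
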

}

{
The set of analytic maps from the upper half plane into itself is called the \dfn{Pick class} (variously also known in the literature as the Carath\'eodory class or the Nevanlinna class). Functions $f$ in the Pick class are conformally related to functions on the disk (the Schur class) by the Cayley transform  $\D \to \Pi$ given by
\[
 z\in \Pi = i\frac{1 + \la}{1 - \la}, \hspace{1cm} \la  \in \D.
 \]
This frequently allows results on the disk to be brought to bear on the Pick class. 
}

{
Our main tool on the upper half plane is the classical \dfn{Nevanlinna representation}.
\begin{theorem}[Nevanlinna Representation]\label{nevrep}
	Let $f: \Pi \to \C.$
	The function $f$ is analytic and maps $\Pi$ to $\cc \Pi$ if and only if there exist $a \in \R$, $b \geq 0$ and a positive Borel measure $\mu$ on $\R$ where $\frac{1}{1+t^2}$ is integrable such that
	\beq\label{fullnev}
		f(z) = a + bz + \int_\R \frac{1}{t-z} - \frac{t}{1+t^2} \dd\mu(t)
	\eeq
	for all $z \in \Pi$.
\end{theorem}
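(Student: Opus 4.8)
The plan is to prove the two implications separately; the ``if'' direction is a direct verification, and the ``only if'' direction carries all the content. For ``if'', I would rewrite the kernel as $k_t(z) := \frac{1}{t-z} - \frac{t}{1+t^2} = \frac{1+tz}{(t-z)(1+t^2)}$. From this form one reads off that $|k_t(z)| \leq \frac{C_K}{1+t^2}$ uniformly for $t \in \R$ and $z$ in a compact $K \subset \Pi$ — the subtracted term $\frac{t}{1+t^2}$ is precisely the correction upgrading the $O(1/|t|)$ decay of $1/(t-z)$ to $O(1/t^2)$, while $|t-z|$ stays bounded below on $K$. Hence the hypothesis $\int \frac{d\mu(t)}{1+t^2} < \infty$ makes $\int_\R k_t(z)\,d\mu(t)$ converge locally uniformly on $\Pi$, and Morera's theorem (via Fubini on a triangle) gives analyticity. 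Since $\im k_t(x+iy) = \frac{y}{(t-x)^2+y^2} \geq 0$, integrating against the positive measure $\mu$, adding the real constant $a$, and adding $bz$ with $b \geq 0$ yields an analytic function with nonnegative imaginary part.

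For ``only if'', let $f:\Pi\to\cc \Pi$ be analytic and put $u = \im f \geq 0$, a nonnegative harmonic function on $\Pi$. The crux is the Herglotz-type representation of such functions: there are $b\geq 0$ and a positive Borel measure $\mu$ on $\R$ with $\int\frac{d\mu(t)}{1+t^2}<\infty$ and $u(x+iy) = by + \int_\R \frac{y}{(t-x)^2+y^2}\,d\mu(t)$. I would derive this by pulling $u$ back to $\D$ through the Cayley transform $\la\mapsto i\frac{1+\la}{1-\la}$, where it becomes nonnegative harmonic and hence, by the classical Riesz--Herglotz theorem on the disk, the Poisson integral of a finite positive measure $\nu$ on $\T$; pushing $\nu$ forward under the boundary change of variables $e^{i\theta}\mapsto -\cot(\theta/2)=t$ yields $\mu$, with the atom of $\nu$ at $\la=1$ (the preimage of $\infty$) producing the term $by$ and the finiteness of $\nu$ becoming exactly $\int\frac{d\mu}{1+t^2}<\infty$. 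Then the function $g(z) = bz + \int_\R k_t(z)\,d\mu(t)$ built above is analytic on $\Pi$ with $\im g = u$, so $f-g$ is analytic with vanishing imaginary part, hence equal to a real constant $a$; this is the asserted formula.

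I expect the main obstacle to be the half-plane Herglotz representation, specifically the bookkeeping of the Cayley transfer: checking that the change of variables $\theta\mapsto-\cot(\theta/2)$ converts the disk Poisson kernel into the half-plane Poisson kernel up to the Jacobian weight $(1+t^2)$, that the atom at $\la=1$ contributes $by$ with the right constant, and that the integrability thresholds match. The remaining points are routine: uniqueness follows with $b=\lim_{y\to\infty}u(iy)/y$, with $\mu$ recovered as the weak-$\ast$ limit of $\im f(x+iy)\,dx$ as $y\to 0^+$ (the Nevanlinna measure $\mu_f$, up to the universal normalizing constant), and with $a=\re f(i)-\re g(i)$. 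Alternatively, one can run the entire argument on the disk by applying the Riesz--Herglotz representation of functions with positive real part to $-if$ composed with the Cayley transform, deferring the same computational burden to the final change of variables.
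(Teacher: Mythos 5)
Your proposal is correct, and it is the standard classical argument. Note, though, that the paper offers no proof of Theorem \ref{nevrep} at all: it is quoted as the classical Nevanlinna representation (going back to \cite{nev22}), so there is no in-paper proof to compare against. Your two directions are exactly the textbook route: the ``if'' half by the kernel identity $\frac{1}{t-z}-\frac{t}{1+t^2}=\frac{1+tz}{(t-z)(1+t^2)}$, which gives the uniform bound $C_K/(1+t^2)$ on compacta and hence local uniform convergence, analyticity, and $\im \geq 0$; the ``only if'' half by applying the Riesz--Herglotz theorem to the nonnegative harmonic function $\im f$ pulled back to $\D$ through the Cayley transform, pushing the finite boundary measure forward (the atom at $1$ producing $by$, the Jacobian producing the weight $1+t^2$), and then recovering $f$ from its imaginary part up to a real constant $a$. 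Your closing remarks on uniqueness are consistent with how the paper uses the theorem: the paper's Nevanlinna measure $\mu_f$ is precisely the weak limit of $\im f(x+iy)\,dx$ and agrees with the representing measure up to the factor $\pi$ coming from $\int_\R \frac{y}{(t-x)^2+y^2}\,dx=\pi$, and $b=\lim_{y\to\infty}\im f(iy)/y$ as you say. The only part you flag as a burden, the bookkeeping of the boundary change of variables $e^{i\theta}\mapsto -\cot(\theta/2)$, is indeed where the computation lives, but it is routine and your outline of it is accurate.
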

The representing measure in Theorem \ref{nevrep} is exactly the Nevanlinna measure $\mu_f$ up to a factor of $\pi$. The expression $\int \frac{1}{t -z} \, d\mu(t)$ in the integral above is the well-known \emph{Cauchy transform} of $\mu$.

{
We call the existence of a conformal linear approximation near $\tau$ ``regularity to order $1$ at $\tau$."
We say a function $f$ is \dfn{regular to order $n$} at $\tau \in \partial D$ if $$f(z) = \tilde{f}(z) + o(d(z,\tau)^{n}) \text{ nontangentially}$$
where $\tilde{f}$ is a map defined on a neighborhood of $\tau$ which takes 
$\partial D$ to the boundary $\partial \Omega.$ (On the disk, $\tilde{f}$ can be taken to be a Blaschke product. On the upper half plane,
$\tilde{f}$ can be taken to be a polynomial with real coefficients.). The following corollary characterizes higher order regularity in terms of the measure $\mu$ arising in the Nevanlinna representation.}
\begin{corollary}\label{nevcor}
An analytic function
$f: \Pi \rightarrow \cc \Pi$
with Nevanlinna representation 
$$f(z) = a + bz + \int_\R \frac{1}{t-z} -\frac{t}{1+t^2} \dd\mu(t)$$
is regular to order $2n-1$ at $\tau$ if and only if 
$\frac{1}{(t-\tau)^{2n}}$ is integrable with respect to $\mu.$
\end{corollary}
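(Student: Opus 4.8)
The plan is to reduce the statement to the $n=1$ case of Theorem \ref{mainthm} (Part (2), or equivalently the ``perfect'' Julia--Carath\'eodory statement) applied not to $f$ itself but to an auxiliary Pick function built from $f$ by a finite iteration of \emph{Schur-type complementation at the boundary point} $\tau$. Concretely, write $\gamma(t)=t^{2n}$; then the asserted equivalence is exactly ``$f$ is $\gamma$-regular iff $f$ is regular to order $2n-1$,'' since $\frac{1}{(t-\tau)^{2n}}=\frac{1}{\gamma(|t-\tau|)}$ being $\mu$-integrable near $\tau$ is the definition of $\gamma$-regularity with $C=1$ (and rescaling $C$ only changes the neighborhood, not integrability). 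So it suffices to prove: $f$ is regular to order $2n-1$ at $\tau$ if and only if $1/(t-\tau)^{2n}\in L^1(\mu)$ near $\tau$. By a real translation we may assume $\tau=0$.

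First I would establish the ``only if'' direction by a direct moment computation. If $f$ is regular to order $2n-1$, then nontangentially $f(z)=p(z)+o(|z|^{2n-1})$ with $p$ a real polynomial; feeding $z=iy$ into the Nevanlinna representation \eqref{fullnev} and isolating $\im f(iy)=by+\int \frac{y}{t^2+y^2}\,\dd\mu(t)$, one sees that the hypothesis forces $\int \frac{y}{t^2+y^2}\,\dd\mu(t)=o(y^{2n-1})$, i.e. $\int\frac{y^2}{t^2+y^2}\,\frac{\dd\mu(t)}{y^{2n-1}}\to 0$. Splitting the integral over $|t|\le y$ and $|t|>y$ and using monotone convergence as $y\downarrow 0$ on the tail gives $\int_{|t|\le 1}\frac{\dd\mu(t)}{t^{2n}}<\infty$. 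This is the standard Nevanlinna-type argument (as in \cite{nev22}) and should be routine; the powers work out because regularity to order $2n-1$, not $2n$, is what pairs with the even power $t^{2n}$ in $\im f$.

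For the ``if'' direction, assuming $\int_{|t|\le 1} t^{-2n}\,\dd\mu(t)<\infty$, I would split $f=f_{\mathrm{near}}+f_{\mathrm{far}}$ according to a decomposition $\mu=\mu|_{[-1,1]}+\mu|_{\R\setminus[-1,1]}$; $f_{\mathrm{far}}$ is analytic across $0$ and contributes only to the polynomial $\tilde f$, so attention is on $f_{\mathrm{near}}$. Here the key step is to expand the Cauchy kernel as
\[
\frac{1}{t-z}=\sum_{k=0}^{2n-1}\frac{z^k}{t^{k+1}}+\frac{z^{2n}}{t^{2n}(t-z)},
\]
valid for $t$ in the support of $\mu_{\mathrm{near}}$ minus a neighborhood of $0$, and then to show that the integrated tail term $R(z):=\int \frac{z^{2n}}{t^{2n}(t-z)}\,\dd\mu_{\mathrm{near}}(t)$ is $o(|z|^{2n-1})$ nontangentially: factoring out $z^{2n}$, one bounds $\bigl|\int \frac{\dd\mu_{\mathrm{near}}(t)}{t^{2n}(t-z)}\bigr|$ using $|t-z|\ge c|t|$ on a Stolz region when $|t|\gtrsim|z|$, together with $\int t^{-2n-1}\dd\mu$ near $0$ — wait, that last bound is not available, so instead one uses the Cauchy transform of the \emph{finite} measure $t^{-2n}\dd\mu_{\mathrm{near}}$ and invokes the classical Julia--Carath\'eodory/Fatou theorem (Theorem \ref{jc}, transported to $\Pi$) for \emph{that} measure to conclude its Cauchy transform tends to a finite nontangential limit, hence $R(z)=z^{2n}\cdot O(1)=o(|z|^{2n-1})$. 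The first $2n$ terms $\sum_k z^k\int t^{-k-1}\dd\mu_{\mathrm{near}}(t)$ are a polynomial of degree $2n-1$ with real coefficients (the coefficients are real because $\mu$ is a real measure and the odd/even symmetry is handled by also absorbing the $-t/(1+t^2)$ correction term, which contributes only lower-order polynomial pieces near $0$), and together with $a$, $bz$, and the analytic $f_{\mathrm{far}}$ this assembles the required $\tilde f$. The main obstacle, and the place requiring genuine care rather than bookkeeping, is exactly this control of the remainder $R(z)$: one must ensure the nontangential approach is used honestly so that $|t-z|$ is comparable to $\max(|t|,|z|)$, and one must correctly match the ``order $2n-1$'' in the definition of regularity with the ``$2n$'' in the integrability condition — an off-by-one here would break the equivalence, and it is ultimately dictated by the fact that only the \emph{imaginary part} of the Cauchy kernel, which is even in $t$ at $z=iy$, is pinned down by the measure.

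Finally I would note that this is consistent with, and indeed a special case of, Theorem \ref{mainthm}(2) with $\gamma(t)=t^{2n}$: there $\sqrt{\gamma(t)}=t^n$ must be a $\gamma$-augury, which it is since $\lambda(t)/t^{2n}=t^{-n}$ is \emph{not} integrable — so in fact the purely measure-theoretic argument above is the appropriate route, and the corollary is best proven directly from the Nevanlinna representation as sketched rather than by quoting the amortized machinery.
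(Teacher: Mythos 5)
Your ``if'' direction is, at its core, the paper's own argument: expanding $\frac{1}{t-z}=\sum_{k=0}^{2n-1}\frac{z^k}{t^{k+1}}+\frac{z^{2n}}{t^{2n}(t-z)}$ against $\mu|_{[-1,1]}$ is the same algebra as the paper's identity $f(z)=a+bz+\int\bigl[\frac{t^{2n}-z^{2n}}{t-z}-\frac{t^{2n+1}}{1+t^2}\bigr]\frac{\dd\mu(t)}{t^{2n}}+z^{2n}\int\frac{1}{t-z}\frac{\dd\mu(t)}{t^{2n}}$. However, your justification that $R(z)=z^{2n}\int\frac{\dd\mu(t)}{t^{2n}(t-z)}$ is $o(|z|^{2n-1})$ does not work as stated: the Cauchy transform $G$ of the finite measure $\dd\nu=t^{-2n}\dd\mu|_{[-1,1]}$ need not have a finite nontangential limit at $\tau=0$. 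Fatou gives limits only almost everywhere, and Julia--Carath\'eodory applied to $\nu$ would require $\int t^{-2}\dd\nu=\int t^{-2n-2}\dd\mu<\infty$, which is more than the hypothesis (and would give $R=O(|z|^{2n})$, i.e.\ more regularity than the corollary asserts --- a sign the step proves too much). What is true, and all that is needed, is $G(z)=o(1/|z|)$ nontangentially: since $\nu(\{0\})=0$ and $|t-z|\geq c(|t|+|z|)$ on a Stolz region, dominated convergence gives $|z|\,|G(z)|\leq c^{-1}\int\frac{|z|}{|t|+|z|}\,\dd\nu(t)\to 0$. With that substitution the forward implication is sound and coincides with the paper's key calculation.

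The ``only if'' direction contains a genuine error. Regularity to order $2n-1$ gives $\im f(iy)=\im\tilde f(iy)+o(y^{2n-1})$, and $\im\tilde f(iy)$ is an odd real polynomial in $y$, not merely $by$; so the hypothesis does \emph{not} force $\int\frac{y}{t^2+y^2}\,\dd\mu(t)=o(y^{2n-1})$. For $n\geq 2$ your claim would give $\int\frac{\dd\mu(t)}{t^2+y^2}=o(y^{2n-2})\to 0$ and hence $\mu=0$; already for $n=1$ the example $\dd\mu(t)=t^2\,dt$ on $[-1,1]$ satisfies the moment condition (so is regular to order $1$) yet has $\im f(iy)\sim 2y$, not $o(y)$. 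The standard repair is to subtract the polynomial and bootstrap: order-$1$ regularity gives $\limsup_{y\to 0}\im f(iy)/y<\infty$, hence $\int t^{-2}\dd\mu<\infty$ by monotone convergence; then write $f=p_1+f_{\mathrm{triv}}+z^2G_1$ with $G_1$ the Cauchy transform of the finite measure $t^{-2}\dd\mu|_{[-1,1]}$, observe that regularity of $f$ to order $2n-1$ forces regularity of $G_1$ to order $2n-3$ (the comparison germ must vanish to second order at $0$ because $z^2G_1(z)=o(|z|)$ nontangentially), and iterate $n-1$ times; this induction is where the ``off-by-one'' matching of $2n-1$ with $2n$ is actually settled, and it is presumably the ``basic exercise'' the paper leaves implicit. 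Finally, a small correction to your closing aside: $\sqrt{\gamma(t)}=t^{n}$ is \emph{not} a $t^{2n}$-augury precisely because $t^{-n}$ fails to be integrable, so the logic of that sentence is inverted, although your conclusion --- that Theorem \ref{mainthm}(2) does not apply with $\gamma(t)=t^{2n}$ and the corollary should be proved directly from the Nevanlinna representation --- is correct.
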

Corollary \ref{nevcor} follows from Theorem \ref{nevrep} as a basic exercise in measure theory and algebraic manipulation of integrals. The key step is the following calculation, which writes $f$ as a polynomial plus an object that vanishes nontangentially to order $2n-1.$
%{\blue fix this}
\begin{align*}
        f(z) &= a + bz + \int_\R \frac{1}{t-z} - \frac{t}{1+t^2} \dd\mu(t) \\
		f(z) & = a + bz + \int_\R \left[\frac{1}{t-z} - \frac{t}{1+t^2}\right]t^{2n} \frac{\dd\mu(t)}{t^{2n}} \\
		f(z) & = a + bz + \int_\R \frac{t^{2n}-z^{2n}}{t-z} - \frac{t^{2n+1}}{1+t^2} \frac{\dd\mu(t)}{t^{2n}}
		+z^{2n}\int_\R \frac{1}{t-z} \frac{\dd\mu(t)}{t^{2n}}.
\end{align*}
Note that $ \int_\R \frac{t^{2n}-z^{2n}}{t-z} - \frac{t^{2n+1}}{1+t^2} \frac{\dd\mu(t)}{t^{2n}}$ is a polynomial of order $2n -1$ in disguise.
{ Working on rational functions in two variables on $\mathbb{D}^2$, Bickel, Pascoe and Sola \cite[Theorem 7.1]{bps1} applied the method of Hankel vector moment sequences as developed in \cite{amchvms, pascoePEMS} to give a concrete relationship between regularity to order $n$ and confinement of singular behavior to regions which approach the boundary with comparable rate. We demonstrate a concrete relationship between boundedness of the Julia quotient on sets which approach the boundary with some rate and regularity of the function nontangentially, which served as foundational inspiration for the current enterprise.}

Regularity to order $n$ was analyzed in terms of the iterated Laplacian of the Julia quotient by Bolotnikov and Kheifets in \cite{bk06}.
We give an extended theory along the lines of Theorem \ref{mainthm} in Section \ref{iterated}.

\subsection{Extended Julia-Carath\'eodory theorems on the disk}

For simply connected domains $D, \Omega \subsetneq \mathbb{C},$ we say $f:D\rightarrow \Omega$ is \dfn{transform $\gamma$-regular}
whenever there are conformal maps $\psi,\chi$ such that $\psi \circ f \circ \chi$ is $\gamma$-regular.
The classical Julia inequality \cite{ju20} for maps $\ph$ from $\mathbb{D}$ to $\cc{\mathbb{D}}$ states:
\beq \label{thejuliainequality}
\frac{|\varphi(z) - \omega |^2}
{1-|\varphi(z)|^2}
\leq
\alpha \frac {|z - \tau|^2} 
{1-|z|^2},\eeq
where $\alpha$ is given by the formula
$\liminf_{z\to\tau} \frac{1-|\varphi(z)|^2}{1-|z|^2} = \alpha.$
One obtains that 
	$$ 
\frac{|\varphi(z) - \omega |^2}{|z - \tau|^2}
\leq
\alpha \frac {1 - |\varphi(z)|^2} {1-|z|^2} \leq 2 \alpha J_{f}(z),$$
and so the function $\ph$ restricted to a subset $S$ is continuous at $\tau$ whenever the Julia quotient for $\ph$ is bounded on $S.$
Thus, one may suitably M\"obius transform between the disk and upper half plane freely and obtain the following form of Theorem \ref{mainthm} Part 2.
\begin{corollary}\label{supercordisk}
 Let $\gamma$ be  $O(t^2)$ such that $\sqrt{\gamma(t)}$ is a 
        $\gamma$-augury.
	Let $\varphi: \mathbb{D} \rightarrow \cc{\mathbb{D}}$ be an analytic function. The following are equivalent:
	\begin{enumerate}
		\item the function $\ph$ is transform $\gamma$-regular,
		\item $J_{\ph}(z)$ is bounded on 
                $S^{\sqrt{D\gamma(Ct)}}_{\tau}$ as $z\rightarrow 0$ for some $C,D>0,$
		\item
		$$\limsup_{t\rightarrow 0} J_{\ph}(\tau e^{-D\gamma(C|t|)+it}) < \infty.$$
	\end{enumerate}
    \end{corollary}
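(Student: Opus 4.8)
The plan is to deduce the corollary from Theorem \ref{mainthm} Part (2) by conjugating with M\"obius maps between $\Pi$ and $\mathbb{D}$; the point is that all three conditions in Theorem \ref{mainthm}(2), as well as the property of becoming $\gamma$-regular after a conformal change of variable, are stable under such conjugation up to adjusting the constants $C$ and $D$.

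First I would set up the transfer. Fix $\tau \in \mathbb{T}$. In each of the three cases the classical Julia--Carath\'eodory theorem (Theorem \ref{jc}), applied to a sequence along which the Julia quotient is bounded --- or, in case (1), to the boundary value guaranteed by $\gamma$-regularity of a conformal conjugate --- produces a unimodular nontangential value $\omega = \varphi(\tau) \in \mathbb{T}$. Choose M\"obius maps $\chi: \Pi \to \mathbb{D}$ and $\psi: \mathbb{D} \to \Pi$ carrying boundary to boundary with $\chi(p) = \tau$, $\psi(\omega) = q$ for finite $p, q \in \mathbb{R} = \partial\Pi$, and set $f = \psi \circ \varphi \circ \chi : \Pi \to \overline{\Pi}$. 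By the definition of transform $\gamma$-regularity together with the invariance of $\gamma$-regularity under the M\"obius action on $\Pi$ (routine from the transformation rule for the Nevanlinna measure, since near a finite boundary point a M\"obius map distorts distances by bounded factors and $\gamma$ is $O(t^2)$), condition (1) for $\varphi$ at $\tau$ is equivalent to $\gamma$-regularity of $f$ at $p$, i.e.\ to condition (a) of Theorem \ref{mainthm}(2) for $f$ at $p$.

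It remains to transfer conditions (2) and (3), which is where the actual work is. The key fact, already recorded just before the statement, is that boundedness of the Julia quotient is conformally stable: for a M\"obius map between $\mathbb{D}$ and $\Pi$, the Julia inequality \eqref{thejuliainequality} shows that both $\dist(\cdot, \partial D)$ and the distance to the distinguished boundary point are distorted by comparable (bounded above and below) factors near that point, so $J_\varphi(z)$ and $J_f(\chi^{-1}(z))$ are comparable up to a multiplicative constant, uniformly near $\tau$. Thus it suffices to check that the $\gamma$-Stolz region $S^{\sqrt{D\gamma(Ct)}}_\tau \subset \mathbb{D}$ is, under $\chi^{-1}$, trapped between two regions of the form $S^{\sqrt{D'\gamma(C't)}}_{p} \subset \Pi$ with $C', D'$ depending only on $C, D$ and the fixed M\"obius data, and that the curves $t \mapsto \tau e^{-D\gamma(C|t|) + it}$ correspond, after reparametrization, to curves $t \mapsto p + t + iD'\gamma(C'|t|)$. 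Both reduce to expanding $\chi$ to first order at $p$, where it is affine plus a higher-order correction, together with the facts that $\gamma$ is monotone and $O(t^2)$ --- so $\sqrt{\gamma(t)}$ is $O(t)$ and a bounded rescaling of the argument of $\gamma$ leaves us inside a region of the same shape --- and that $\sqrt{\gamma}$ being a $\gamma$-augury survives such bounded changes of constant (immediate, since the defining integrability condition only requires \emph{some} constant and $\gamma$ is monotone). Combining, (2) and (3) for $\varphi$ at $\tau$ are equivalent to conditions (b) and (c) of Theorem \ref{mainthm}(2) for $f$ at $p$, and the corollary follows from that theorem.

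The main obstacle is precisely the geometric bookkeeping in the last paragraph: verifying that the one-parameter family of approach curves and the shape of the $\gamma$-Stolz region survive the conformal change of variables with only a change of the constants $C, D$, and that nothing degenerates as $t \to 0$. Since every relevant quantity --- distance to the boundary, distance to the distinguished point, and the value of the Julia quotient --- is comparable up to constants under a M\"obius map, and since Theorem \ref{mainthm}(2) is phrased with the quantifier ``for some $C, D > 0$,'' this comparability is exactly what is needed and no finer control is required.
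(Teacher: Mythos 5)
Your proposal is correct and follows essentially the same route as the paper: reduce to the upper half plane version (Theorem \ref{mainthm} Part (2), i.e.\ Corollary \ref{supercor}) by conjugating with M\"obius maps, using the Julia inequality \eqref{thejuliainequality} to pin down the boundary value $\omega$ and to guarantee that the function stays near $\omega$ on the relevant approach regions, so that the Julia quotient, the $\gamma$-Stolz-type regions, and the approach curves transfer with only bounded changes of the constants $C,D$. The paper's own argument is exactly this (stated more tersely), so no further comparison is needed.
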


\subsection{Perturbation theory}

{
The connection between the function and properties of representing measure $\mu_f$ can be turned into a detailed analysis of the spectrum of a self-adjoint operator via the spectral theorem using a combination of classical and modern techniques. See N. Nikolski's \cite{nikolski1}, particularly Chapter 3, for an extensive survey of the subject background. 

In \cite{lt2016, lt2017}, for example, C. Liaw and S. Treil use Cauchy-type transforms (essentially Nevanlinna representations with restricted measures) to explore rank one perturbations of self-adjoint and unitary operators, recovering information about the operators from the functions that arise from their representing measures. 

Given a self-adjoint operator $A$ and a positive rank one operator $P,$
one may want to understand the spectrum and moreover the spectral measures of $A + \alpha P.$ Specifically, the rank one perturbations give rise to a family of analytic functions $F_\alpha: \Pi \rightarrow \cc \Pi$ such that 
$F_\alpha = \frac{F}{1+\alpha F}$ (the so-called \emph{Aronszajn-Krein formula}), and the structure of the representing measure for $F_\alpha$
can be understood to be somewhat robust under perturbation. For example, the so-called Kato-Rosenblum and Aronszajn-Donoghue theorems say that
the absolutely continuous and singular spectrum are essentially preserved.
Our Theorem \ref{perturbationtheorem} shows that for most reasonable $\gamma$,
$\gamma$-regularity is conformally invariant, and therefore, for example, factors through the Aronszajn-Krein formula.

  The notion of $\gamma$-regularity as developed here gives a detailed analysis of the ``edge of the support" of a measure $\mu.$ Much of the fundamental algebraic structure, which is essentially established in Lemma \ref{AJmuBound}, can (in principle) be adapted to understand other parts (absolutely continuous, singular spectrum etc.) with a modified Julia type quotient of the form $\frac{\dist(\ph(z), \partial \Omega)}{\dist(z, \partial D)^k}$ for $-1 \leq k \leq 1.$
}

\subsection{Applications to moment problems and a problem} \label{momentSection}
We now give several examples and applications of our results to moment problems. First and most obviously, we see that
$t^n$-regularity is equivalent to the existence of absolute moments
$\int |t|^{-n}\dd\mu.$ { More broadly, the notion of $\gamma$-regularity is connected to the classical Hamburger moment problem and the related finite moment problem. See, e.g. \cite{akhiezer}, Theorem 2.1.1 and Theorem 3.2.1 for a discussion of the classical problem and the developments related to its solution, as well as \cite{amchvms, pascoePEMS} for a two-variable generalization. The general approach of analyzing nontangential regularity by  examining a reduction of the representation in Equation \eqref{fullnev} to a Cauchy transform is reminiscent of the approach we take in the present work.
}

Perhaps more interesting is the connection to the so-called \dfn{moment determinacy problem}. Specifically,
when do the moments $m_n = \int t^{n}\dd\mu$ determine the measure $\mu$ uniquely? In our case, we will be interested in the negative moment problem determinacy, i.e. when does the sequence  $\int t^{-n}\dd\mu$ determine $\mu$ uniquely? Certain moment determinacy conditions translate directly into certain forms of $\gamma$-regularity.

We say a measure $\mu$ is \dfn{analytically determined} if the sequence of moments satisfies $|m_{n}|< AB^nn!.$
The condition of analytic determinacy implies that the moments of $\mu$ uniquely determine $\mu$. (See, for example, the discussion of the classical Hamburger moment problem in Sections $X.1$ and $X.6$ of \cite{reedsimon}, particularly Example 4 of Section $X.6$.)
Additionally, we see that the condition is equivalent to the condition that the series 
$\sum \frac{\int t^n \dd \mu(t)}{n!}x^n$ has positive radius of convergence,
which is precisely equivalent to saying that $e^{C/t}$ is $\mu$-integrable for some $C>0.$
Tracing through the definitions, we see the following.
\begin{theorem}
    A function $f$ is $e^{-1/t}$-regular at $0$ if and only if
    the inverse moment problem for $\mu_f$ is analytically determined.
\end{theorem}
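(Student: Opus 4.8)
The plan is to unwind both sides of the claimed equivalence into a single integrability statement about the Nevanlinna measure $\mu_f$ and then observe they coincide. On the ``$\gamma$-regularity'' side, apply the definition with $\gamma(t) = e^{-1/t}$: the function $f$ is $e^{-1/t}$-regular at $0$ if and only if there is a constant $C>0$ such that $\frac{1}{\gamma(C|t|)} = e^{1/(C|t|)}$ is $\mu_f$-integrable on a neighborhood of $0$. On the ``moment'' side, the paper has already done the bookkeeping: analytic determinacy of the inverse moment problem means $|m_{-n}| = |\int t^{-n}\,\dd\mu(t)| < AB^n n!$ for some $A,B$, which is equivalent to the series $\sum \frac{\int t^{-n}\,\dd\mu(t)}{n!}x^n$ having positive radius of convergence, which (by monotone convergence applied to the positive measure $\mu$, after possibly splitting into the positive and negative parts of the support) is in turn equivalent to $e^{C/|t|}$ being $\mu$-integrable for some $C>0$ — that is, to integrability of $\frac{1}{\gamma(C|t|)}$ near $0$. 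So both conditions reduce to the same statement, and the theorem follows.

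First I would state the reduction of $e^{-1/t}$-regularity precisely, plugging into the definition of $\gamma$-regularity from the introduction, and note that the relevant integrability is local at $0$ because away from $0$ the weight $e^{1/(C|t|)}$ is bounded while $\frac{1}{1+t^2}$ is already $\mu_f$-integrable by the Nevanlinna representation (Theorem \ref{nevrep}); hence only the behavior of $\mu_f$ near $0$ matters. Next I would record the chain of equivalences on the moment side, which the excerpt essentially hands us: analytic determinacy $\iff$ the Borel-type estimate $|m_{-n}| < AB^n n!$ $\iff$ positive radius of convergence of $\sum \frac{m_{-n}}{n!}x^n$ $\iff$ $\int e^{C/|t|}\,\dd\mu(t) < \infty$ for some $C>0$. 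The last equivalence is where one invokes the positivity of $\mu$: expanding $e^{C/|t|} = \sum \frac{C^n}{n! |t|^n}$ and using Tonelli/monotone convergence, finiteness of the integral is equivalent to $\sum \frac{C^n m_{-n}}{n!} < \infty$, which holds for some $C>0$ exactly when the power series has positive radius of convergence.

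Finally I would match the two reductions: $e^{-1/t}$-regularity at $0$ asks for some $C>0$ with $\int \frac{1}{\gamma(C|t|)}\,\dd\mu_f(t) = \int e^{1/(C|t|)}\,\dd\mu_f(t) < \infty$ near $0$, which after relabeling the constant is precisely $\int e^{C'/|t|}\,\dd\mu_f(t) < \infty$, i.e. analytic determinacy of the inverse moment problem. Since $\mu_f$ and the representing measure $\mu$ agree up to the harmless factor of $\pi$, this completes the proof.

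The main obstacle is not conceptual but bookkeeping: one must be careful that ``inverse moments'' $\int t^{-n}\,\dd\mu$ make sense, which implicitly presupposes $0 \notin \operatorname{supp}\mu$ (or at least that $\mu$ puts no mass too near $0$ in a way that would make the negative moments diverge) — indeed, $e^{-1/t}$-regularity forces exactly this kind of decay of $\mu$ near $0$, so the two sides are consistent. I would also take a moment to handle the sign: $t^{-n}$ for odd $n$ changes sign across $0$, so strictly one works with $\int |t|^{-n}\,\dd\mu$ or splits the real line into $t>0$ and $t<0$ pieces; since $\mu \geq 0$, the absolute-value version is what controls the exponential integral, and this is the version implicitly intended by ``analytically determined.'' Beyond these points of care, every step is a routine application of monotone convergence and the definitions already laid out in the excerpt.
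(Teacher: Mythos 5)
Your proposal is correct and follows essentially the same route as the paper, which simply traces the definitions: analytic determinacy of the inverse moments $\iff$ positive radius of convergence of $\sum \frac{m_{-n}}{n!}x^n$ $\iff$ $\int e^{C/|t|}\,\dd\mu_f(t)<\infty$ for some $C>0$, which is precisely the definition of $e^{-1/t}$-regularity at $0$. Your added remarks on working with $\int |t|^{-n}\,\dd\mu$ and on localizing to a neighborhood of $0$ (in the spirit of the paper's $f_{red}$ decomposition) only make explicit the bookkeeping the paper leaves implicit.
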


We say a measure $\mu$ is \dfn{quasi-analytically determined} if the sequence of moments satisfies $\sum m_{2n}^{-2n}$ diverges.
The condition of quasi-analytic determinacy implies that the moments of $\mu$ uniquely determine $\mu$. (This is the so-called Carleman condition. Again, see  \cite{reedsimon}.)
In this case, test functions of the form 
$g_p(x) =\sum \frac{\int t^{2n} \dd \mu(t)}{p_{2n}}x^{2n}$,
where $p_n$ satisfies the condition, serve the role of $e^{-1/x}.$
That is: if a function $f$ is $e^{g_p(1/t)}$-regular at $0,$ then
    the inverse moment problem for $\mu$ is quasi-analytically determined.
Whenever  $p_n = (nd(n))^n$ where $d$ is monotone 
and  $e^{Ax/d(Bx)} \leq g_{nd(n)}(x) \leq  e^{A'x/d(B'x)}$, we say $p_n$ is \dfn{pseudo-Denjoy}.
One can show, in a somewhat involved exercise, for example, that $p_n = n\ln n$ is pseudo-Denjoy.
Moreover, whenever $p_n$ is pseudo-Denjoy, we have that $\sum m_{2n}^{-2n}$
diverges if and only if $x[(d(1/x)/x)']$ is not integrable on $[0,1]$, via a calculation using Riemann sums.

One can ask, for a given $\gamma$ and $\gamma$-augury $\la$ with the additional property that $\frac{t \la(t)}{\gamma(t)^2} \dd\gamma(t)$ is integrable (that is, $C=1$ in the definition of $\gamma$-augury), \emph{when can the quotient $\la/\gamma$ be bounded?} What follows is a somewhat informal heuristic.
Let $\gamma(t) = e^{\kappa(t)}.$
Let $\lambda(t) = \eta(t) e^{\kappa(t)}$
So,
    $$\frac{t \la(t)}{\gamma(t)^2} \dd\gamma(t) = t \eta(t) \dd \kappa(t)$$
Formally, we could let $\eta(t) = \frac{\zeta(t)}{t \dd \kappa(t)}.$
So we need that $\zeta(t)$ is integrable.
Now, $\la(t)/\gamma(t) = \frac{\zeta(t)}{t \dd \kappa(t)}.$
So we see that if the quotient is to be bounded, then $t \,\dd \kappa(t)$
should be integrable.

The parallel with the pseudo-Denjoy case would give a na\"ive conjecture that
the moment problem is determined whenever $f$ is $e^{\kappa(t)}$-regular
for some $\kappa$ such that $t \, \dd \kappa(t)$ is non-integrable on $[0,1].$ (Moreover, that perhaps some partial or whole converses are true.)

\section{Elementary properties of the Julia quotient for Pick functions}

We now establish some elementary facts about maps from the upper half plane into itself. 
The Pick class has the useful property (notably absent for self-maps of the disk) that it forms a cone.
In the following discussion, we will show that the cone structure carries
through to the associated Julia quotients and derived objects that we use in our proof of the main result.

Thus, we often work at the boundary point $\tau = 0$ without any loss of generality. On the upper half plane, $\Pi,$
the Julia quotient is of the form
\beq\label{pickjulia} J_f(z) = \frac{\im f(z)}{\im z}.\eeq

The lemma follows immediately from Equation \ref{pickjulia}.
\begin{lemma}\label{jadd} \label{ajadd}
Let $f, g: \Pi \to \cc\Pi$ be analytic functions. Then,
\[
J_{f+g}(z) = J_f(z) + J_g(z).
\]
Consequently,
\[
AJ^\tau_{f+g} = AJ^\tau_f + AJ^\tau_g.
\]
\end{lemma}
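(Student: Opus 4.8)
The plan is to reduce both assertions to the explicit formula \eqref{pickjulia} for the Julia quotient on $\Pi$ and then invoke the $\R$-linearity of the imaginary part together with the linearity of the integral. Before doing anything I would record that, because $\cc\Pi$ is closed under addition (the Pick class forms a cone, as emphasized above), the sum $f+g$ is again an analytic map $\Pi \to \cc\Pi$; hence $J_{f+g}$ and $AJ^\tau_{f+g}$ are in fact defined.

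First I would establish the pointwise identity. For $z \in \Pi$, using \eqref{pickjulia} and $\im(w_1+w_2) = \im w_1 + \im w_2$,
\[
J_{f+g}(z) = \frac{\im\big(f(z)+g(z)\big)}{\im z} = \frac{\im f(z)}{\im z} + \frac{\im g(z)}{\im z} = J_f(z) + J_g(z).
\]
This is literally the assertion that \eqref{pickjulia}, which is linear in $f$ over $\R$, is additive.

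Next I would deduce the amortized statement. The key observation is that the arc $C_d$ entering \eqref{defAJ}, as well as its arclength $\abs{C_d}$, depends only on the domain $\Pi$, the boundary point $\tau$, and the weight $\lambda$ — never on the function — so the same $C_d$ is used for $f$, $g$, and $f+g$. Thus, for any fixed $\lambda$ and $d>0$, linearity of the Lebesgue integral and the pointwise identity give
\[
AJ^\tau_{f+g,\lambda}(d) = \frac{1}{\abs{C_d}}\int_{C_d} J_{f+g}(z)\,\dd|z| = \frac{1}{\abs{C_d}}\int_{C_d}\big(J_f(z)+J_g(z)\big)\,\dd|z| = AJ^\tau_{f,\lambda}(d) + AJ^\tau_{g,\lambda}(d).
\]

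There is no real obstacle here: the only point deserving a moment's attention is the well-definedness of the objects — that is, that $f+g$ lies in the Pick class — which is exactly the cone property of $\cc\Pi$ noted in the surrounding discussion. Everything else is immediate from linearity of $\im(\cdot)$ over $\R$ and of the integral, which is why the lemma follows at once from Equation \eqref{pickjulia}.
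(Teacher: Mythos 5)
Your proof is correct and follows exactly the route the paper intends: the paper simply remarks that the lemma ``follows immediately from Equation \eqref{pickjulia},'' i.e.\ additivity of $\im(\cdot)$ gives the pointwise identity, and linearity of the integral over the function-independent arc $C_d$ gives the amortized one. Your added observation that the cone property of the Pick class makes $J_{f+g}$ and $AJ^\tau_{f+g}$ well defined is a reasonable, if minor, elaboration.
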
 

 On the upper half plane the Nevanlinna representation is also additive (prima facie, there is no reason to believe that the sum of representations should be the representation of the sum). 
\begin{lemma}
    Let $f,g: \Pi \to \cc\Pi$ be analytic functions with Nevanlinna representations against $\mu_f$ and $\mu_g$ respectively. Then 
    $f+g$ has a Nevanlinna representation against the measure $\mu_f + \mu_g$.
\end{lemma}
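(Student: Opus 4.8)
The plan is to verify the Nevanlinna representation of $f+g$ directly, exploiting uniqueness of the data $(a,b,\mu)$ in Theorem \ref{nevrep}. Suppose $f$ has representation data $(a_f, b_f, \mu_f)$ and $g$ has data $(a_g, b_g, \mu_g)$. First I would observe that $f+g$ is again analytic from $\Pi$ to $\cc\Pi$, since $\cc\Pi$ is closed under addition (the imaginary parts add and each is $\geq 0$), so by Theorem \ref{nevrep} it has \emph{some} Nevanlinna representation with data $(a, b, \mu)$. The goal is then to identify this data as $(a_f + a_g,\, b_f + b_g,\, \mu_f + \mu_g)$.

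The cleanest route is to add the two integral formulas termwise:
\begin{align*}
f(z) + g(z) &= (a_f + a_g) + (b_f + b_g)z \\
&\quad + \int_\R \left[\frac{1}{t-z} - \frac{t}{1+t^2}\right]\dd\mu_f(t) + \int_\R \left[\frac{1}{t-z} - \frac{t}{1+t^2}\right]\dd\mu_g(t).
\end{align*}
Since $\frac{1}{1+t^2}$ is integrable against both $\mu_f$ and $\mu_g$, it is integrable against $\mu_f + \mu_g$, and the two integrals combine into a single integral against $\mu_f + \mu_g$ by linearity of the integral with respect to the measure (valid because the integrand is, for fixed $z \in \Pi$, in $L^1(\mu_f)$ and in $L^1(\mu_g)$ — this uses the standard estimate $\left|\frac{1}{t-z} - \frac{t}{1+t^2}\right| \leq \frac{C_z}{1+t^2}$ for $z$ in a fixed compact subset of $\Pi$). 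This exhibits $(a_f + a_g,\, b_f + b_g,\, \mu_f + \mu_g)$ as representation data for $f+g$.

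The remaining point is uniqueness: I would invoke the uniqueness clause implicit in Theorem \ref{nevrep} (the data $a \in \R$, $b \geq 0$, and the positive Borel measure $\mu$ are uniquely determined by $f$) to conclude that the measure appearing in \emph{the} Nevanlinna representation of $f+g$ is exactly $\mu_f + \mu_g$, not merely some measure. In particular, this shows $\mu_{f+g} = \mu_f + \mu_g$, consistent with the earlier remark that the Nevanlinna measure is the weak limit of $\im(f+g)(x+iy)\,\dd x = \im f(x+iy)\,\dd x + \im g(x+iy)\,\dd x$.

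There is no real obstacle here; the only thing requiring a word of care is the interchange of sum and integral, which is immediate once one notes the integrand is absolutely integrable against each measure separately for fixed $z$. If the paper's convention does not build uniqueness into the statement of Theorem \ref{nevrep}, one can alternatively recover $\mu_{f+g}$ directly from the Stieltjes inversion / weak-limit definition of the Nevanlinna measure, using Lemma \ref{jadd} (additivity of $\im f / \im z$, hence of $\im f(x+iy)$) to get $\mu_{f+g} = \mu_f + \mu_g$ at the level of the defining weak limits.
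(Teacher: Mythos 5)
Your proof is correct, but it takes a different route from the paper's. The paper defines the Nevanlinna measure $\mu_f$ as the weak limit of the measures $\im f(x+iy)\,\dd x$ as $y\to 0$, and its entire proof is the one-line observation that the weak limit of $\im f(x+iy)+\im g(x+iy)$ is the sum of the two weak limits — exactly the ``fallback'' you sketch in your final paragraph via Lemma \ref{jadd}. You instead add the integral representations termwise and then invoke uniqueness of the data $(a,b,\mu)$ to identify the representing measure of $f+g$. Two remarks on the comparison. First, the termwise addition alone already proves the lemma as literally stated (``$f+g$ has a Nevanlinna representation against $\mu_f+\mu_g$''): you exhibit such a representation explicitly, so no uniqueness is needed for that. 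Uniqueness (or, more in the spirit of the paper, the weak-limit/Stieltjes-inversion characterization) only enters if you want the stronger identification $\mu_{f+g}=\mu_f+\mu_g$ with the Nevanlinna measure as the paper defines it — and since Theorem \ref{nevrep} as stated in the paper does not assert uniqueness, the weak-limit argument is the cleaner way to close that gap, as you correctly note. Second, your route has the minor virtue of also recording that the affine data add ($a_f+a_g$, $b_f+b_g$) and of checking the $L^1$ estimate $\left|\frac{1}{t-z}-\frac{t}{1+t^2}\right|\leq \frac{C_z}{1+t^2}$ justifying the combination of integrals, which the paper's argument never needs; the paper's approach buys brevity and stays entirely at the level of the defining weak limits. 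Either way the result stands.
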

\begin{proof}
This follows directly from the definition of Nevanlinna measure, as the weak limit of $\im f(x+iy) + \im g(x+iy)$ is the sum of the limits.
\end{proof}

The following Lemma, though apparently somewhat vacuous, codifies an important fact about $AJ^\tau_{f, \la}(d)$ away from $0$.
\begin{lemma}\label{vacuous}
    Let $f: \Pi \to \cc\Pi$ be an analytic function with Nevanlinna representation measure $\mu$ such that $(-1,1)$
    is not in the support of $\mu.$
    Then, $J_f$ is continuous on $\D$.
    Consequently,
    $AJ^0_{f,\lambda}(d)$ is bounded as $d \rightarrow 0.$
\end{lemma}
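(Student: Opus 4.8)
The plan is to exploit the Nevanlinna representation of $f$ directly. Since $(-1,1)$ is disjoint from $\operatorname{supp}\mu$, the Cauchy transform $z \mapsto \int_\R \frac{1}{t-z}\,\dd\mu(t)$ extends analytically across the open interval $(-1,1)$ on the real axis, and hence defines an analytic function on a neighborhood of the closed disk $\cc\D$; indeed for $z \in \D$ we have $|t - z| \geq \operatorname{dist}(z, \operatorname{supp}\mu) \geq 1 - |z| > 0$, so all the integrands in Theorem \ref{nevrep} are bounded uniformly on compact subsets of $\D$ and the integral converges nicely. Since $a + bz$ is entire, $f$ itself extends analytically to a neighborhood of $\cc \D$. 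In particular $\im f$ is continuous on $\cc\D$, and since $\im z > 0$ on the interior and $\im z$ does not vanish on $C_d$ for $d$ small, $J_f(z) = \im f(z)/\im z$ is continuous on $\D$; this is the first assertion.

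For the consequence, recall $\tau = 0$ and that the arcs $C_d = \{z : \dist(z, \partial \Pi) = \lambda(d),\ \sqrt{\dist(z,0)^2 - \dist(z,\partial\Pi)^2} \leq d\}$ are, in the upper half plane, horizontal segments at height $\lambda(d)$ with real part bounded by $d$ in absolute value, so $C_d \subset \D$ once $d$ is small enough (recall $\lambda(d) = o(d)$, so in particular $\lambda(d) \to 0$ and $\lambda(d)^2 + d^2 < 1$ eventually). On these segments $J_f$ is continuous, and as $d \to 0$ the segments shrink toward the origin, so $\sup_{z \in C_d} |J_f(z)| \to J_f(0)$, which is finite. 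Since $AJ^0_{f,\lambda}(d)$ is an average of $J_f$ over $C_d$, we get $|AJ^0_{f,\lambda}(d)| \leq \sup_{z\in C_d}|J_f(z)|$, which is bounded (in fact convergent) as $d \to 0$.

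I do not anticipate a serious obstacle here: the content is essentially that the singular behavior of a Pick function is carried entirely by its Nevanlinna measure, so removing the measure from a neighborhood of $\tau$ forces local analyticity. The only point requiring minor care is the bookkeeping that the arcs $C_d$ genuinely lie inside $\D$ for small $d$ — this uses the hypothesis $\lambda(t) = o(t)$ built into the definition of a $\lambda$-Stolz region, together with the fact that points of $C_d$ satisfy $|z|^2 = \dist(z,\partial\Pi)^2 + (\text{tangential part})^2 \leq \lambda(d)^2 + d^2$. Once that is observed, continuity of $J_f$ on $\D$ does all the work, and the boundedness (indeed existence of a limit) of the amortized Julia quotient is immediate.
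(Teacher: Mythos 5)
Your overall route is the same as the paper's --- use the Nevanlinna formula to continue $f$ analytically across $(-1,1)$ into the disk --- but there is a genuine gap exactly at the point where the lemma has content. Continuity of $\im f$ together with the non-vanishing of $\im z$ off the real axis only gives continuity of $J_f$ on $\D \cap \Pi$, which is true for \emph{every} Pick function and implies nothing; the first assertion of the lemma is that $J_f$ extends continuously to the real diameter $(-1,1)\subset\D$, where $\im z = 0$ and $J_f$ is a $0/0$ indeterminate form. Your proposal never confronts this: you neither observe that $f$ is real-valued on $(-1,1)$ (so that $\im f$ vanishes there), nor control the quotient $\im f(z)/\im z$ as $z$ approaches the real segment. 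As a result the step ``$\sup_{z\in C_d}|J_f(z)| \to J_f(0)$, which is finite'' is unjustified --- $J_f(0)$ has not been defined, and its finiteness is precisely what must be proved, since $\lambda(d)\to 0$ pushes the arcs $C_d$ onto the real axis. The paper closes this gap by noting that $\im f$ is real analytic on $\D$ and vanishes on $\R\cap\D$, hence factors as $\im f = (\im z)\,u$ with $u$ real analytic on $\D$, so that $J_f = u$ is continuous up to the real segment and in particular bounded near $0$; alternatively, after establishing that $\im f$ vanishes on $(-1,1)$, a mean value or Taylor estimate of the form $|\im f(x+iy)| \le y\,\sup|\partial_y \im f|$ near the origin would serve the same purpose. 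Without some such step the ``consequently'' does not follow from what you wrote.

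A secondary inaccuracy: the Cauchy transform need not extend analytically to a neighborhood of $\cc\D$, since the support of $\mu$ may meet $\{\pm 1\}$ (for $\mu = \delta_1$ the function has a pole at $z=1$), so ``$\im f$ is continuous on $\cc\D$'' is not available in general. This is harmless, because only analyticity on $\D$ (indeed near $0$) is needed, and your estimate $|t-z| \ge 1-|z|$ for $t\notin(-1,1)$ does give that; likewise your bookkeeping that $C_d\subset\D$ for small $d$, using $|z|^2 \le \lambda(d)^2 + d^2$, is correct and is implicitly needed in the paper's proof as well.
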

\begin{proof}
A quick analysis of Equation \eqref{fullnev} gives that $f$ is analytic on $\mathbb{D}.$ The function $\im f$ is a real analytic function of two variables on $\D$,
and $\im f |_{\mathbb{R}\cap \D}$ is equal to $0,$ and in fact the power series at $0$ converges on all of $\D.$ Therefore, $\im f = (\im z)u$
for some real analytic $u.$ Now note that $J_f = u.$
\end{proof}

The Nevanlinna representation of a Pick function $f$ allows a decomposition of $f$ into the sum of Pick functions, concentrated near the point of interest, in this case $\tau = 0$. The core of this decomposition is a restricted Pick function $f_{red}$ that encodes the behavior of $f$ near $0$ in an algebraically simpler form (in fact a Cauchy transform). The argument about the decomposition of $f$ that follows is in the spirit of the proof given for Theorem 5 in Ch. 32 of \cite{lax02}.

\begin{lemma}\label{splitupterms}
    Let $f: \Pi \to \cc\Pi$ be an analytic function.
    There are analytic functions 
    $f_{trivial}, f_{red}: \Pi \to \cc\Pi$ such that:
    \begin{enumerate}
        \item $f = f_{trivial} + f_{red},$
        \item $f_{trivial}$ is analytic on $\mathbb{D}$ and
        real valued on $(-1,1),$
        \item
            $f_{red} = \int \frac{1}{t-z} d\tilde{\mu}(t)$
        where $\tilde{\mu} = \mu|_{[-1,1]}.$ 
    \end{enumerate}
    Moreover,
    \begin{itemize}
        \item[(A)] $AJ^0_{f, \lambda}(d)$ is bounded as $d \rightarrow 0$ if and only if 
        $AJ^0_{f_{red}, \lambda}(d)$ is bounded as $d\rightarrow 0.$
        \item[(B)] If $\gamma: [0, \infty) \to \R^{\geq 0}$ is monotone increasing and $O(t^2)$, then $\frac{1}{\gamma(|t|)}$ is integrable with respect to $d \mu(t)$
        if and only if $\frac{1}{\gamma(|t|)}$ is integrable with respect to $d \tilde{\mu}(t).$ That is, $f$ is $\gamma$-regular
	if and only if $f_{\text{red}}$ is.
    \end{itemize}
\end{lemma}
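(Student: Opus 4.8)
The plan is to read the decomposition straight off the Nevanlinna representation of $f$ from Theorem \ref{nevrep}, namely $f(z) = a + bz + \int_\R \big(\tfrac{1}{t-z} - \tfrac{t}{1+t^2}\big)\,\dd\mu(t)$, and to cut the integral at $\abs t = 1$. Because $\tfrac{1}{1+t^2}\geq\tfrac12$ on $[-1,1]$, the restricted measure $\tilde\mu = \mu|_{[-1,1]}$ has finite total mass; hence the Cauchy transform $f_{red}(z) = \int \tfrac{1}{t-z}\,\dd\tilde\mu(t)$ converges absolutely for every $z\in\Pi$, and $\im f_{red}(z) = \int \tfrac{\im z}{\abs{t-z}^2}\,\dd\tilde\mu(t)\geq 0$, so $f_{red}:\Pi\to\cc\Pi$; this is (3). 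I would then set $f_{trivial} := f - f_{red}$ and fold the finite constant $\int_{[-1,1]}\tfrac{t}{1+t^2}\,\dd\mu(t)$ into $a$; a direct computation gives $f_{trivial}(z) = a' + bz + \int_{\R\setminus[-1,1]}\big(\tfrac{1}{t-z} - \tfrac{t}{1+t^2}\big)\,\dd\mu(t)$ with $a'\in\R$, and since $\tfrac{1}{1+t^2}$ is still integrable against $\nu := \mu|_{\R\setminus[-1,1]}$ this is again a Nevanlinna representation, so $f_{trivial}:\Pi\to\cc\Pi$ is analytic and (1) holds, with $\nu$ the representing measure of $f_{trivial}$.

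For (2) I would use the identity $\tfrac{1}{t-z}-\tfrac{t}{1+t^2} = \tfrac{1+tz}{(t-z)(1+t^2)}$. For each $t$ with $\abs t\geq 1$ this is holomorphic in $z$ on $\mathbb{D}$, and on any disk $\abs z\leq r<1$ it is bounded by a fixed $\nu$-integrable function: a constant multiple of $\tfrac{1}{1+t^2}$ once $\abs t\geq 2$, and a constant on the annulus $1\leq\abs t\leq 2$, where $\nu$ has finite mass. The standard dominated-convergence (Morera) argument then shows $f_{trivial}$ is holomorphic on $\mathbb{D}$ — in fact on $\C\setminus\mathrm{supp}(\nu)\supseteq\mathbb{D}$ — and for real $x\in(-1,1)$ every kernel value $\tfrac{1}{t-x}-\tfrac{t}{1+t^2}$ is real, as are $a'$ and $bx$, so $f_{trivial}(x)\in\R$, giving (2).

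Item (A) is where the cone structure of the Pick class does the work. By Lemma \ref{ajadd} we have $AJ^0_{f,\la}(d) = AJ^0_{f_{trivial},\la}(d) + AJ^0_{f_{red},\la}(d)$; both $f_{trivial}$ and $f_{red}$ are Pick functions, so their Julia quotients $\tfrac{\im(\cdot)}{\im z}$ are nonnegative on $\Pi$, hence so are both amortized quotients, and a sum of two nonnegative quantities is bounded as $d\to0$ if and only if each summand is. But $\mathrm{supp}(\nu)\subseteq(-\infty,-1]\cup[1,\infty)$ misses $(-1,1)$, so Lemma \ref{vacuous} gives that $AJ^0_{f_{trivial},\la}(d)$ is bounded as $d\to0$; therefore $AJ^0_{f,\la}(d)$ is bounded if and only if $AJ^0_{f_{red},\la}(d)$ is. Item (B) is then immediate: for $\delta\leq 1$ the measures $\mu$ and $\tilde\mu$ agree on $(-\delta,\delta)$, so $\tfrac{1}{\gamma(C\abs t)}$ is $\mu$-integrable on a neighborhood of $0$ exactly when it is $\tilde\mu$-integrable there, and since $f_{red}$ has representing measure $\tilde\mu$ this says precisely that $f$ is $\gamma$-regular if and only if $f_{red}$ is (monotonicity of $\gamma$ makes $\tfrac{1}{\gamma(\abs t)}$ bounded away from $0$, so only the behavior near $0$ is relevant to $\gamma$-regularity).

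The only steps that require genuine care are the domination estimate underlying (2) and the observation that $\tilde\mu$ is finite, which is what allows $f_{red}$ to be written as a pure Cauchy transform with no counterterm; everything else is bookkeeping, with (A) falling out cleanly once one records that Julia quotients of Pick functions are nonnegative and invokes Lemmas \ref{ajadd} and \ref{vacuous}.
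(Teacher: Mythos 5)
Your proposal is correct and takes essentially the same route as the paper: the identical cut of the Nevanlinna integral at $|t|=1$ with the counterterm $\int_{[-1,1]}\frac{t}{1+t^2}\,d\mu$ absorbed into the constant, the same invocation of Lemma \ref{ajadd} together with nonnegativity of Julia quotients and of Lemma \ref{vacuous} applied to $f_{trivial}$ for part (A), and the same $O(t^2)$/monotonicity considerations for part (B). The only differences are cosmetic: you spell out the Morera/domination details behind (2) and handle (B) by noting that $\mu$ and $\tilde\mu$ agree on a neighborhood of $0$ (where $\gamma$-regularity is decided), which is, if anything, slightly cleaner than the paper's domination of $\frac{1}{\gamma(|t|)}$ by $\frac{1}{t^2}$ off $[-1,1]$.
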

\begin{proof}

Let $f: \Pi \to \cc\Pi$ be analytic. Then $f$ has Nevanlinna representation $a + bz + \int_\R \frac{1}{t -z} - \frac{t}{1+t^2} \, d \mu(t)$
such that $\frac{1}{1+t^2}$ is integrable with respect to $\mu$
as in Theorem \ref{nevrep}.

Define the measure $\tilde{\mu}$ on $[-1,1]$ by \beq\label{goodmeasure} \,d\tilde{\mu(t)} = \,d\mu(t).\eeq 

Make the following definitions:
\[
f_{trivial}(z): = a + bz + \int_{\R / [-1,1]} \frac{1}{t - z}-\frac{t}{1+t^2} \, d \mu(t)
+  \int_{[-1,1]} -\frac{t}{1+t^2} \, d \mu(t).
\]
\[
f_{red}(z) := \int_{[-1,1]} \frac{1}{t - z} \, d\mu(t).
\]
That $f = f_{trivial} + f_{red}$ is obvious.

Thus (1) and (3) hold. Part (2) follows immediately from the fact that $a + bz$ is real-valued on $\R$ and that the integrals in $f_{trivial}$ represent a Pick function against the measure $\mu\big|_{\R / [-1,1]}$.

Part (A, $\Rightarrow$) follows from Lemma \ref{ajadd} and part (1). Part (A, $\Leftarrow$) follows from Lemma \ref{vacuous} and the hypothesis that $AJ_{f_{red}, \la}^0$ is bounded.

Part (B) follows from the fact that $\frac{1}{\gamma(|t|)}$ must be integrable on $\R \setminus [-1,1]$ because it is dominated by 
$\frac{1}{t^2}$ since it was assumed that $\gamma(t)$ is $O(t^2).$
\end{proof}

We now provide a simple formula for the Julia quotient, essentially as convolution of the measure with a Poisson kernel.
\begin{lemma} \label{JQint}
	Let $f: \Pi \rightarrow \overline{\Pi}$ be an analytic function of the form
	$f = \int_{[-1,1]} \frac{1}{t-z} d\mu(t)$ for some finite positive measure $\mu.$
	Then,
	\beq \label{JQintformula} J_{f} (x+iy) = \int \frac{1}{(t - x)^2 + y^2} d\mu(t).\eeq

	Note that the algebraic form of \eqref{JQintformula} implies that, $J_f(x+iy)$ is monotone for each fixed $x$ as $y$ goes to $0.$
\end{lemma}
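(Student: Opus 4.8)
The plan is to compute $\im f$ directly from the given integral formula and then divide by $\im z$. Write $z = x+iy$ with $y>0$. Since $\mu$ is a finite positive measure supported in $[-1,1]$ and $|t-z|\ge \im z = y > 0$ for every real $t$, the Cauchy kernel $t\mapsto \frac{1}{t-z}$ is bounded on $\operatorname{supp}\mu$; hence the integral defining $f(z)$ converges absolutely and I may take the imaginary part under the integral sign.

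Next, I would expand the kernel as $\frac{1}{t-z} = \frac{\overline{t-z}}{|t-z|^2} = \frac{(t-x)+iy}{(t-x)^2+y^2}$, so that $\im\frac{1}{t-z} = \frac{y}{(t-x)^2+y^2}$. Integrating against $\mu$ gives $\im f(x+iy) = \int \frac{y}{(t-x)^2+y^2}\,d\mu(t)$, which is precisely $y = \im z$ times the right-hand side of \eqref{JQintformula}. Dividing through by $\im z$ yields the claimed identity.

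For the concluding monotonicity remark, I would fix $x$ and observe that for each fixed $t$ the map $y\mapsto \frac{1}{(t-x)^2+y^2}$ is monotone increasing as $y$ decreases to $0$; the monotone convergence theorem then transfers this monotonicity to the integral, so $J_f(x+iy)$ increases monotonically (with limit $\int \frac{1}{(t-x)^2}\,d\mu(t) \in [0,\infty]$) as $y\to 0^+$.

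There is essentially no obstacle here: the only step requiring even momentary care is the interchange of the imaginary part — and, for the remark, of the limit — with the integral, and both are immediate from the boundedness of the Cauchy kernel on $\operatorname{supp}\mu$ for fixed $z\in\Pi$ together with the finiteness of $\mu$.
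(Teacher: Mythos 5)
Your proposal is correct and follows essentially the same route as the paper: compute $\im \frac{1}{t-z} = \frac{y}{(t-x)^2+y^2}$, integrate against $\mu$, and divide by $\im z$. The extra remarks on interchanging $\im$ (and the limit) with the integral and on the monotonicity are harmless elaborations of steps the paper leaves implicit.
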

\begin{proof}
The following computation proves the claim:
    \begin{align*}
			J_f(z)&=\frac{\IM f}{\IM z} \\
			&= \frac{1}{\IM z} \IM \int \frac{1}{t - z} \, d\mu(t) \\
			&= \frac{1}{\IM z} \int \frac{-\IM (t-z)}{|t-z|^2} \,d\mu(t) \\
			&= \int \frac{1}{ (t - x)^2 + y^2} \, d\mu(t).
	 \end{align*}
\end{proof}

\section{The augur lemma and the amortized Julia-Carath\'eodory theorem}
We now prove the central estimate relating the amortized Julia quotient to the density of the measure, which we call the \emph{augur lemma.}
Although $\gamma$-regularity is classically motivated, it is imminently plausible that the ``true theorem" is in fact the augur lemma. 
\begin{lemma}[The augur lemma]\label{AJmuBound}
    Let $\la(t)$ be $o(t).$
	Let $f: \Pi \rightarrow \overline{\Pi}$ be an analytic function of the form
	$f = \int_{[-1,1]} \frac{1}{t-z} d\mu(t)$ for some finite positive measure $\mu.$ Then there exist constants $L_1, L_2$ such that
	\begin{equation}\label{ordinarybound}
		L_1 \cdot \frac{\mu(-\ep,\ep)}{\ep \lambda(\ep)}
			\leq AJ_{f,\lambda}^0(\ep)
			\leq \frac{\mu(-2\ep,2\ep)}{\ep \lambda(\ep)}\cdot L_2 +
			4\int_{[-1,1]} \frac{1}{t^2} d\mu(t).
	\end{equation}
	We remark that the function $\frac{1}{t^2}$ will be integrable with respect to $\mu$ whenever there is any sequence going to $0$ with the Julia quotient bounded. 
\end{lemma}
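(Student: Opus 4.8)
The first move is to make the geometry on $\Pi$ with $\tau = 0$ completely explicit. Since $\dist(z,\partial\Pi) = \im z$ and $\dist(z,0) = |z|$, we have $\sqrt{\dist(z,0)^2 - \dist(z,\partial\Pi)^2} = |\re z|$, so the arc $C_d$ is simply the horizontal segment $\{x + i\lambda(d) : |x| \le d\}$, of arclength $|C_d| = 2d$, and $\dd|z| = \dd x$ along it. Hence
\begin{equation*}
    AJ^0_{f,\lambda}(\ep) = \frac{1}{2\ep}\int_{-\ep}^{\ep} J_f\bigl(x + i\lambda(\ep)\bigr)\,\dd x.
\end{equation*}
Plugging in the formula of Lemma \ref{JQint} for $J_f$ and applying Tonelli's theorem (the integrand is nonnegative), this becomes $AJ^0_{f,\lambda}(\ep) = \frac{1}{2\ep}\int_{[-1,1]} K_\ep(t)\,\dd\mu(t)$, where $K_\ep(t) := \int_{-\ep}^{\ep} \frac{\dd x}{(t-x)^2 + \lambda(\ep)^2}$ is an integrated Poisson-type kernel. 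Both inequalities in \eqref{ordinarybound} will follow from pointwise estimates on $K_\ep(t)$ in three regimes: $|t| < \ep$, $\ep \le |t| < 2\ep$, and $|t| \ge 2\ep$.

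For the lower bound, I restrict the integral to $|t| < \ep$ (discarding the rest, which is nonnegative) and bound $K_\ep(t)$ from below there. For $|t| \le \ep$ the segment $\{x : |t - x| \le \lambda(\ep)\} \cap (-\ep,\ep)$ has length at least $\lambda(\ep)$ once $\ep$ is small enough that $\lambda(\ep) \le \ep$ (here is where $\lambda(t) = o(t)$ enters), and on it $(t-x)^2 + \lambda(\ep)^2 \le 2\lambda(\ep)^2$; hence $K_\ep(t) \ge \tfrac{1}{2\lambda(\ep)}$. This gives $AJ^0_{f,\lambda}(\ep) \ge \tfrac14 \cdot \frac{\mu(-\ep,\ep)}{\ep\lambda(\ep)}$, so $L_1 = \tfrac14$ works for all sufficiently small $\ep$. (Equivalently one can compute $K_\ep(t)$ exactly as a sum of arctangents.) For the upper bound I split $[-1,1]$ at $|t| = 2\ep$. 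On $|t| < 2\ep$ I use the crude global bound $K_\ep(t) \le \int_{-\infty}^{\infty} \frac{\dd x}{(t-x)^2 + \lambda(\ep)^2} = \frac{\pi}{\lambda(\ep)}$, contributing $\frac{\pi}{2}\cdot\frac{\mu(-2\ep,2\ep)}{\ep\lambda(\ep)}$. On $|t| \ge 2\ep$, for $|x| \le \ep$ we have $|t - x| \ge |t| - \ep \ge |t|/2$, so $K_\ep(t) \le \int_{-\ep}^{\ep} \frac{4}{t^2}\,\dd x = \frac{8\ep}{t^2}$; this part contributes at most $\frac{1}{2\ep}\int_{|t|\ge 2\ep} \frac{8\ep}{t^2}\,\dd\mu(t) \le 4\int_{[-1,1]} \frac{1}{t^2}\,\dd\mu(t)$. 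Adding the two pieces yields the claimed upper bound with $L_2 = \tfrac{\pi}{2}$.

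Finally, the closing remark: if $J_f(z_n)$ stays bounded along some sequence $z_n = x_n + iy_n \to 0$, then since $J_f(z_n) = \int_{[-1,1]} \frac{\dd\mu(t)}{(t-x_n)^2 + y_n^2}$ by Lemma \ref{JQint}, Fatou's lemma gives $\int_{[-1,1]} \frac{1}{t^2}\,\dd\mu(t) \le \liminf_n J_f(z_n) < \infty$, so the error term in \eqref{ordinarybound} is finite exactly in the situation of interest.

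The computations here are all elementary; the only thing requiring care is bookkeeping around the hypothesis $\lambda(t) = o(t)$, which is needed to guarantee $\lambda(\ep) \le \ep$ (so that the subinterval in the lower bound genuinely has length $\gtrsim \lambda(\ep)$) for all small $\ep$, and hence that $L_1$, $L_2$ can be chosen uniformly as $\ep \to 0$. I do not expect any serious obstacle beyond this; the content is really the identification of $AJ$ with $\frac{1}{2\ep}\int K_\ep\,\dd\mu$ and the observation that $K_\ep(t)$ behaves like $1/\lambda(\ep)$ for $|t|\lesssim\ep$ and like $\ep/t^2$ for $|t|\gtrsim\ep$.
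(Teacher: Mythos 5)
Your proposal is correct and follows essentially the same route as the paper: both rewrite $AJ^0_{f,\lambda}(\ep)$ via Lemma \ref{JQint} and Tonelli, bound the inner $x$-integral (you by direct kernel estimates, the paper by evaluating it exactly as a difference of arctangents), and split the measure at $|t|=2\ep$ with the same $|x-t|\ge |t|/2$ tail bound giving the $4\int t^{-2}\,\dd\mu$ term. The only additions are cosmetic: explicit constants $L_1=\tfrac14$, $L_2=\tfrac{\pi}{2}$, and a Fatou-lemma justification of the closing remark, which the paper leaves as an assertion.
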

\begin{proof}

	By Lemma \ref{JQint}, 
		\begin{equation}
		AJ_{f,\lambda}^0(\ep) = \frac{1}{2\ep} \int_{-\ep}^{\ep} \int_{[-1,1]} \frac{1}{(x-t)^2 + y^2} \, d\mu(t) dx,
	\end{equation}
   	    where $y= \lambda(\ep).$ We can simplify the formula by changing the order of integration and evaluating an integral via freshman calculus:
	\begin{align*}
		 AJ_{f,\lambda}^0(\ep) 
		 	&\geq \frac{1}{2\ep} \int_{-\ep}^{\ep} \int_{-\ep}^{\ep} \frac{1}{(x-t)^2 + y^2} d\mu(t) d x\\
		 	&= \frac{1}{2\ep} \int_{-\ep}^{\ep} \int_{-\ep}^{\ep} \frac{1}{(x-t)^2 + y^2} d  x d\mu(t) \\
		 	&= \frac{1}{2\ep y} \int_{-\ep}^{\ep} \arctan\left(\frac{\ep-t}{y}\right) - \arctan\left(\frac{-\ep-t}{y}\right) d\mu(t).
	\end{align*}
	Note that 
		$$\left|\frac{\ep-t}{y} - \frac{-\ep-t}{y}\right| = \frac{2\ep}{y},$$
	so for any $t \in (-\ep,\ep)$, one of $\frac{\ep-t}{y},\,\frac{-\ep-t}{y}$ is positive and one is negative, and either $\left|\frac{\ep-t}{y} \right|\geq \frac{\ep}{y}$ or $\left|\frac{-\ep-t}{y} \right|\geq \frac{\ep}{y}.$
	 Since $y=\lambda(\ep)$ is $o(\ep),$ the quantity $\frac{\ep}{y}>1$ for $\ep$ small enough. So,
	\begin{equation*}
		\arctan\left(\frac{\ep-t}{y}\right) - \arctan\left(\frac{-\ep-t}{y}\right)
			> \arctan\left(1\right)-\arctan(0).
	\end{equation*}
	Therefore,
	\begin{equation}
		L_1 \cdot \frac{\mu(-\ep,\ep)}{\ep \lambda(\ep)} \leq AJ_{f,\lambda}^0(y).
	\end{equation}

	For the upper bound consider
	\begin{align*}
		AJ_{f,\lambda}^0(\ep) 
		 	&= \frac{1}{2\ep} \int_{-\ep}^{\ep} \int_{-2\ep}^{2\ep} \frac{1}{(x-t)^2 + y^2} \, d\mu(t) dx\\
		 		&\qquad+ \frac{1}{2\ep} \int_{-\ep}^{\ep} \int_{(-2\ep, 2\ep)^{\mathsf{c}}} \frac{1}{(x-t)^2 + y^2} \, d\mu(t) dx.
	\end{align*}
	The first term can be bounded using essentially the same method as the lower bound, since the diameter of the range of arctangent is $2\pi:$
	\begin{align*}
		&\frac{1}{2\ep} \int_{-\ep}^{\ep} \int_{-2\ep}^{2\ep} \frac{1}{(x-t)^2 + y^2} \, d\mu(t) dx\\
		 	&\qquad= \frac{1}{2\ep y} \int_{-2\ep}^{2\ep} \arctan\left(\frac{\ep-t}{y}\right) - \arctan\left(\frac{-\ep-t}{y}\right) \, d\mu(t)\\
		 	&\qquad \leq \frac{1}{\ep y} \cdot \mu(-2\ep, 2\ep) \cdot \pi.
	\end{align*}

	Examining the second term, we see that $|x-t|\geq t/2$ on $(-2\ep, 2\ep)^{\mathsf{c}}\times(-\ep,\ep)$. So,
	\begin{align}
		\frac{1}{(x-t)^2 +y^2} 
			&\leq \frac{4}{t^2}  \label{integrandBound}
	\end{align}
	and we are done.
\end{proof}

We now need a technical measure theoretic lemma.
\begin{lemma}\label{isgammaaugury}
If $\frac{1}{t^2}$ is integrable with respect to $\mu,$ then
$\frac{\mu(-2t,2t)}{t}$ is $o(t).$
\end{lemma}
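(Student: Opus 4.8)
The plan is to combine a trivial pointwise lower bound for $1/s^2$ on short intervals with the absolute continuity of the integral. First note that the assertion ``$\frac{\mu(-2t,2t)}{t}$ is $o(t)$'' unwinds to $\frac{\mu(-2t,2t)}{t^2}\to 0$ as $t\to 0^+$. For $s\in(-2t,2t)$ we have $s^2\le 4t^2$, hence $\frac{1}{s^2}\ge\frac{1}{4t^2}$ on that interval, and integrating against $\mu$ gives
\[
\int_{(-2t,2t)}\frac{1}{s^2}\,\dd\mu(s)\ \ge\ \frac{\mu(-2t,2t)}{4t^2},
\qquad\text{equivalently}\qquad
\frac{\mu(-2t,2t)}{t^2}\ \le\ 4\int_{(-2t,2t)}\frac{1}{s^2}\,\dd\mu(s).
\]
So it suffices to show the right-hand side tends to $0$ as $t\to 0^+$.

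For that, I would observe that integrability of $1/s^2$ against $\mu$ forces $\mu(\{0\})=0$, since $1/s^2$ equals $+\infty$ at $s=0$. The intervals $(-2t,2t)$ decrease to the $\mu$-null set $\{0\}$ as $t\downarrow 0$, so by dominated convergence — with the hypothesised integrable majorant $1/s^2$ — the tail integrals $\int_{(-2t,2t)}\frac{1}{s^2}\,\dd\mu(s)$ converge to $\int_{\{0\}}\frac{1}{s^2}\,\dd\mu=0$. Combined with the displayed inequality this yields $\frac{\mu(-2t,2t)}{t^2}\to 0$, which is the claim.

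The only genuine subtlety is the singularity of $1/s^2$ at the origin: it is precisely the integrability hypothesis that excludes an atom at $0$ and makes the tail integrals vanish, after which the argument is just the elementary comparison above. No quantitative rate is produced, which is consistent with the $o$-statement; indeed the same reasoning gives the slightly cleaner fact $\mu(-2t,2t)=o(t^2)$.
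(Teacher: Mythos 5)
Your proof is correct, and it takes a genuinely different route from the paper. You use the trivial pointwise comparison $\tfrac{1}{s^2}\ge\tfrac{1}{4t^2}$ on $(-2t,2t)$ to get $\tfrac{\mu(-2t,2t)}{t^2}\le 4\int_{(-2t,2t)}\tfrac{1}{s^2}\,\dd\mu(s)$, and then let the tail integral vanish by dominated convergence (equivalently, continuity from above of the finite measure $A\mapsto\int_A s^{-2}\,\dd\mu$), the only subtlety being that integrability of $1/s^2$ forces $\mu(\{0\})=0$ so the limit set $\{0\}$ contributes nothing --- all of which you handle correctly. The paper instead argues by contradiction: it extracts a sparse sequence $t_n\to 0$ with $\mu(-t_n,t_n)/t_n^2>C$ and $t_n/t_{n+1}>2$, uses the layer cake principle to compare $\int_{-1}^1 t^{-2}\,\dd\mu(t)$ with $\int_0^1 \mu(-t,t)t^{-3}\,\dd t$, and shows a Riemann-sum lower bound over the intervals $[t_n,2t_n]$ diverges. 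Your argument is shorter and more elementary, sidestepping the subsequence extraction and the layer-cake identity entirely; the paper's version has the advantage that the layer-cake comparison it sets up is the same computation reused in the proof of Theorem \ref{mainthmPrime}, so it keeps the quantitative integral bookkeeping that the surrounding arguments rely on, whereas your proof yields only the limit statement (which is all the lemma asserts). Your closing remark that the conclusion can be phrased as $\mu(-2t,2t)=o(t^2)$ is just a restatement of the claim, not a strengthening, but it is harmless.
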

\begin{proof}
	To see that  $\frac{\mu(-2t,2t)}{t}$ is $o(t)$, suppose there were a sequence $t_n \rightarrow 0$ such that 
	$\frac{\mu(-t_n,t_n)}{t_n^2}> C,$ and $t_n / t_{n+1}>2.$
	Now, 
	    $$\int^1_{0} \frac{\mu(-2t,2t)}{t^3} dt$$
	must be integrable. (We have assumed $1/t^2$ is integrable with respect to $d\mu(t),$ and so, by the layer 
cake principle, $\int^1_{0} \frac{\mu(-t,t)}{t^3} dt + 2 = \int^1_{-1} \frac{1}{t^2}\,d\mu(t),$ which is comparable to the desired integral.)
	However, taking a partial Riemann sum type estimate
	with intervals $[t_n, 2t_n],$ where the integrand must be bounded below by
	$\frac{\mu(-t_n,t_n)}{8t_n^3},$
	 we see that
	    $$\int^1_{0} \frac{\mu(-2t,2t)}{t^3} dt
	    \geq \sum \frac{t_n\mu(-t_n,t_n)}{8t_n^3}
	    \geq \sum \frac{\mu(-t_n,t_n)}{8t_n^2} \geq \sum C = \infty.$$
\end{proof}

We now prove the amortized Julia-Carath\'eodory theorem, Part (1) of Theorem \ref{mainthm}.
\begin{theorem}\label{mainthmPrime}
Let $\gamma$ be  $O(t^2).$
An analytic function
$f: \Pi \rightarrow \cc \Pi$
is $\gamma$-regular
if and only if there exists 
a $\gamma$-augury $\lambda$ such that
$AJ_{f,\la}^\tau(d)$ is bounded as $d \rightarrow 0.$
\end{theorem}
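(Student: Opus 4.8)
The plan is to reduce everything to a Cauchy transform supported near $0$, recast both sides of the equivalence as statements about the distribution function $\nu(u):=\mu((-u,u))$, and then let the augur lemma together with a layer-cake identity do the work. Without loss of generality take $\tau=0$, and by Lemma \ref{splitupterms} (parts (A) and (B)) we may assume $f=f_{red}=\int_{[-1,1]}\frac{1}{t-z}\,d\mu(t)$ for a finite positive measure $\mu$ on $[-1,1]$; under this reduction $\gamma$-regularity of $f$ is exactly the statement that $\int_{[-1,1]}\gamma(C|t|)^{-1}\,d\mu(t)<\infty$ for some $C>0$, and since $\gamma=O(t^2)$ this in turn forces $\int_{[-1,1]}t^{-2}\,d\mu<\infty$.

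The computational core is a single layer-cake / integration-by-parts identity: for $\gamma$ monotone increasing with $\gamma(0)=0$ and $\mu$ as above with no atom at $0$,
\[
\int_{[-1,1]}\frac{d\mu(t)}{\gamma(C|t|)} \;=\; \frac{\mu([-1,1])}{\gamma(C)} \;+\; C\int_0^1 \frac{\nu(u)}{\gamma(Cu)^2}\,d\gamma(Cu),
\]
(with $d\gamma$ read as a Stieltjes measure when $\gamma$ is not $C^1$). After the substitution $v=Cu$, the right-hand integral has exactly the shape occurring in the definition of a $\gamma$-augury once one inserts $\nu(u)\asymp u\lambda(u)$. I would pair this with the augur lemma, Lemma \ref{AJmuBound}, which sandwiches $AJ^0_{f,\lambda}(\ep)$ between a multiple of $\frac{\nu(\ep)}{\ep\lambda(\ep)}$ and $\frac{\mu((-2\ep,2\ep))}{\ep\lambda(\ep)}L_2+4\int_{[-1,1]}t^{-2}\,d\mu$, whose last term is a finite constant precisely when $f$ is $\gamma$-regular.

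For the forward implication, assume $f$ is $\gamma$-regular. Then $\int t^{-2}\,d\mu<\infty$, so Lemma \ref{isgammaaugury} gives $\mu((-2u,2u))/u=o(u)$; set $\lambda(u):=\mu((-2u,2u))/u$ (if this vanishes near $0$, $\mu$ is trivial near $0$ and Lemma \ref{vacuous} finishes after adding any fixed $\gamma$-augury). Then $\lambda$ is $o(u)$, and since $u\lambda(C_0u)=\nu(2C_0u)/C_0$, running the layer-cake identity in reverse — with the constants matched by taking $C_0=1/(2C_1)$, where $C_1$ witnesses $\gamma$-regularity — shows $\lambda$ is a $\gamma$-augury. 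Feeding this $\lambda$ into the upper bound of the augur lemma collapses the first term to the constant $L_2$, because $\ep\lambda(\ep)=\mu((-2\ep,2\ep))$, so $AJ^0_{f,\lambda}(d)$ is bounded. For the converse, suppose $\lambda$ is a $\gamma$-augury with $AJ^0_{f,\lambda}(d)$ bounded; the lower bound of the augur lemma gives $\nu(\ep)\le K\ep\lambda(\ep)$ for small $\ep$, and substituting this into the layer-cake identity — matching constants by $C=1/C_0$ with $C_0$ witnessing that $\lambda$ is a $\gamma$-augury, and absorbing the harmless tail over $[\delta,1]$ — bounds $\int\gamma(C|t|)^{-1}\,d\mu$ by a multiple of $\int_0^1 \frac{t\lambda(C_0t)}{\gamma(t)^2}\,d\gamma(t)<\infty$, so $f$ is $\gamma$-regular.

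The main obstacle is the bookkeeping around this identity: getting the Stieltjes change of variables $v=Cu$ correct when $d\gamma$ is only a measure, and — more tediously — lining up the three constants in play (the one in $\gamma$-regularity, the one in the definition of $\gamma$-augury, and the factor $2$ arising from the $\mu((-2\ep,2\ep))$ in the upper half of the augur sandwich). These are genuinely routine, since every relevant hypothesis is of the form ``for some $C>0$'' and is local at $0$, so one is always free to shrink the interval of integration and rescale; the factor $2$ is in fact exploited, not fought, by building it into the definition of $\lambda$ in the forward direction.
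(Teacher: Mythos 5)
Your proposal is correct and follows essentially the same route as the paper's proof: the layer-cake/integration-by-parts identity reducing $\gamma$-regularity to integrability of $\mu(-t,t)\,\gamma(Ct)^{-2}\,\dd\gamma(Ct)$, the augur lemma (Lemma \ref{AJmuBound}) sandwich, the choice $\lambda(t)=\mu(-2t,2t)/t$ in the forward direction, and Lemma \ref{isgammaaugury} to confirm $\lambda(t)=o(t)$. The only differences are cosmetic: you make the reduction via Lemma \ref{splitupterms} and the constant-matching explicit, and you treat the degenerate case $\mu\equiv 0$ near $0$, which the paper passes over.
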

\begin{proof}
    Let $\gamma(t)$ be  $O(t^2).$

    First, by assumption $\gamma$ is monotonically increasing and $\gamma(0)=0,$ which gives that $\frac{1}{\gamma}$ is monotonically decreasing and is unbounded at zero.
	Applying the layer cake principle, we see that 
	$$\int^1_{-1}\frac{1}{\gamma(C|t|)}d\mu(t) = - \int^1_{0} \mu(-t,t) \dd \frac{1}{\gamma(Ct)} + \frac{2}{\gamma(C)}.$$
	Here $\dd g$ denotes the distributional derivative of a monotone function.
    Note,
        $$-\dd \frac{1}{\gamma} = \frac{1}{\gamma^2} \dd \gamma.$$

	So, we must understand the integrability of
	$\int^1_{0} \frac{ \mu(-t,t)}{\gamma(Ct)^2} \dd \gamma(Ct).$
    	Lemma \ref{AJmuBound} implies that
	\beq \label{auguryestimate} \int^1_{0} \frac{ \mu(-t,t)}{\gamma(Ct)^2} \dd \gamma(Ct) \leq 
	\int^1_{0} \frac{t\la(t)}{\gamma(Ct)^2} \dd \gamma(Ct).\eeq
	A substitution gives that we must understand when the quantity
	$\frac{t\la(C't)}{\gamma(t)^2} \dd \gamma(t)$ integrable for some $C'>0.$ (In fact, $C' = 1/C.$)

	Therefore, if $\la$ is a $\gamma$-augury, then 
	there is a $C>0$ such that our analyzed quantity is integrable.
	Thus, $f$ is $\gamma$-regular.
	
	On the other hand, if the quantity is integrable,
	assigning $\la(t) = \frac{\mu(-2t,2t)}{t}$ gives a $\gamma$-augury.
	(Note that $\frac{t\la(t/2)\dd\gamma(t)}{\gamma(t)^2}$ is integrable using the same layer cake calculation,
	since we obtain equality up to a constant multiple in Equation \eqref{auguryestimate}.)
	By Lemma \ref{isgammaaugury}, $\la(t)$ is $o(t).$
	    
	 Now,
	       $\frac{\mu(-2\ep, 2\ep)}{\ep \la(\ep)} = 1.$
	 So, by Lemma \ref{AJmuBound}
	       $AJ^0_{f, \la}(\ep) \leq L\frac{\mu(-2\ep,2\ep)}{\ep\la(\ep)} +C = L+C,$ which is bounded.
\end{proof}

One should note that the construction of $\la$ in the converse direction gives that for a given specific function, to test any level of $\gamma$-regularity, the \emph{same} $\la$-Stolz region can be chosen. Note that the lower bound of the augur lemma, Lemma \ref{AJmuBound}, means that on any essentially larger $\la$-Stolz region (up to constants) the amortized Julia quotient will be unbounded.

\section{Interpretations of $\gamma$-regularity and pitting irregularities} \label{pitting}

    We show that $\gamma$-regularity gives regularity on certain (probably small) $\lambda$-Stolz regions. (Where regularity is interpreted variously as having boundedness of the Julia quotient, boundedness of the amortized Julia quotient, and boundedness of the function.)
 However, outside of these good $\lambda$-Stolz regions, there are families of examples arising from functions where the measure $\mu$ in the representation in Equation \eqref{fullnev} is a sum of point masses. Directly above the point masses in $\Pi$, the behavior of $J_f$ is very bad, which helps to explain why we need amortization in cases where $\gamma(t)/t$ is $o(\sqrt{\gamma(Ct)})$ for all $C>0.$ Essentially, above a point mass near the boundary, the value very briefly fluctuates wildly, creating a narrow but deep ``pit" which is filled in by the amortization process. 

\subsection{Boundedness of the Julia quotient}
The following omnibus theorem gives various places where the Julia quotient and its amortizations are bounded.
    \begin{theorem} \label{pittingtheorem}
	Let $\gamma$ be  $O(t^2)$ be a monotone increasing function.
        \begin{enumerate}
        \item  For any $\gamma$-regular function $f: \Pi \rightarrow \cc \Pi,$
        there exists a $C>0$ such that
        $J_{f}(z)$ is bounded on $S^{\sqrt{D\gamma(Ct)}}_{\tau}$ as $z\rightarrow 0$
        for every $D>0.$
        \item  For any $\gamma$-regular function $f: \Pi \rightarrow \cc \Pi,$
        there exists a $C>0$ such that
        $AJ^\tau_{f,\gamma(Ct)/Ct}(d)$ is bounded as $d\rightarrow 0.$
        (Note that this implies that if
        we can find a $\gamma$-augury $\lambda$ such that 
        the amortized Julia quotient is bounded near $\tau$ on the $\lambda$-Stolz region,
        then it is also bounded on the $\min(\gamma(Ct)/Ct,\lambda)$-Stolz region.)
         \item Moreover, for any $\lambda$ which is  $o(\sqrt{C\gamma})$ for some $C>0$,
        there is a function $f: \Pi \rightarrow \cc \Pi$ that  
        is $\gamma$-regular, and 
        $J^0_{f}(z)$ is unbounded on $S^{\lambda}_{\tau}$ as $z\rightarrow 0.$
        \end{enumerate}
    \end{theorem}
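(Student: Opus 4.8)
\textbf{Proof plan for Theorem \ref{pittingtheorem}.}

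The plan is to reduce each part to the augur lemma (Lemma \ref{AJmuBound}) applied to $f_{red}$, using Lemma \ref{splitupterms} to discard $f_{trivial}$, which contributes nothing to either $\gamma$-regularity or (by Lemma \ref{vacuous}) boundedness on any $\lambda$-Stolz region near $\tau=0$. For Part (1): if $f$ is $\gamma$-regular then $\frac{1}{\gamma(C|t|)}$ is $\mu$-integrable for some $C$, and I would translate this (via the layer-cake identity already used in the proof of Theorem \ref{mainthmPrime}) into a bound of the form $\mu(-t,t) \leq K t \gamma(Ct) \cdot h(t)$ for a suitable summable remainder, or more directly use the pointwise estimate from Lemma \ref{JQint}: on $S^{\sqrt{D\gamma(Ct)}}_\tau$ a point $z = x+iy$ satisfies $y \geq \sqrt{D\gamma(C'|x|)}$ for an appropriate $C'$ (absorbing the $\dist(z,\tau)$ vs.\ $|x|$ discrepancy, which is comparable since $y$ is $o(|x|)$), so that
\[
J_f(z) = \int \frac{d\mu(t)}{(t-x)^2 + y^2} \leq \int_{|t-x| < |x|/2} \frac{d\mu(t)}{y^2} + \int_{|t-x| \geq |x|/2} \frac{4\,d\mu(t)}{t^2}.
\]
The second integral is bounded by $4\int \frac{d\mu}{t^2}$, which is finite since $\gamma$-regular functions have $1/t^2$ integrable. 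The first is at most $\mu(-\tfrac32|x|, \tfrac32|x|)/(D\gamma(C'|x|))$, and $\gamma$-regularity forces $\mu(-s,s)/\gamma(C's) \to 0$ — this last implication is the analogue of Lemma \ref{isgammaaugury} and I would prove it by the same layer-cake/Riemann-sum contradiction argument. For Part (2): apply the upper bound of Lemma \ref{AJmuBound} with $\lambda(t) = \gamma(Ct)/Ct$, giving
\[
AJ^0_{f,\lambda}(\ep) \leq L_2 \frac{\mu(-2\ep,2\ep)}{\ep \lambda(\ep)} + 4\int_{[-1,1]} \frac{d\mu(t)}{t^2} = L_2 C \frac{\mu(-2\ep,2\ep)}{\gamma(2C\ep)}\cdot\frac{2\ep}{2C\ep} \cdot \frac{1}{2\ep} \cdot (\cdots),
\]
— more carefully, $\ep\lambda(\ep) = \gamma(C\ep)/C$, so the first term is $L_2 C \mu(-2\ep,2\ep)/\gamma(C\ep)$, which is bounded (indeed $\to 0$) exactly because $f$ is $\gamma$-regular, by the same estimate used in Part (1). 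The parenthetical remark then follows since $J_f \geq 0$ monotone in $y$ and the amortized quotient over the smaller region $\min(\gamma(Ct)/Ct,\lambda)$ is dominated by the sum of the two bounded pieces.

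For Part (3), the construction: given $\lambda$ which is $o(\sqrt{C\gamma})$ for every $C>0$, I would build $f = \int \frac{d\mu}{t-z}$ with $\mu = \sum_n c_n \delta_{t_n}$ a carefully chosen sum of point masses with $t_n \to 0$. The masses $c_n$ should be small enough that $\sum c_n/t_n^2 < \infty$ and $\sum c_n/\gamma(Ct_n) < \infty$ for some $C$ (securing $\gamma$-regularity via Lemma \ref{splitupterms}(B) and the layer-cake criterion), yet large enough relative to the width of the pit that directly above $t_n$ at height $\lambda(\text{something}\cdot t_n)$ we get $J_f(t_n + i\lambda(\cdot)) \geq c_n/\lambda(\cdot)^2 \to \infty$. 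The point is that on $S^\lambda_\tau$ the admissible height above $t_n$ is forced down to $\lambda(C'|t_n|)$, and since $\lambda^2 = o(\gamma)$ the ratio $c_n/\lambda(C't_n)^2$ beats $c_n/\gamma(C''t_n)$ by an unbounded factor; choosing $c_n \sim \gamma(t_n)$ (which is summable against $1/t_n^2$ after thinning the $t_n$, e.g.\ $t_n$ geometric, using $\gamma = O(t^2)$) makes $c_n/\gamma \sim 1$ summable while $c_n/\lambda^2 \to \infty$. I would take the $t_n$ lying on the sequence that witnesses $\lambda(C't_n)^2/\gamma(t_n) \to 0$.

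The main obstacle is Part (3): one must simultaneously satisfy the competing integrability/divergence constraints and verify that the pit above $t_n$ is not ``smoothed out'' by contributions from the other point masses — i.e.\ that $\sum_{m\neq n} c_m/((t_m - t_n)^2 + \lambda^2)$ stays bounded while the single term $c_n/\lambda(C't_n)^2$ blows up. Geometric spacing of the $t_n$ should make the cross terms a convergent geometric-type series dominated by $\sum_m c_m/t_m^2 < \infty$, but pinning down the constants and confirming the point $t_n + i\lambda(C't_n)$ genuinely lies in $S^\lambda_\tau$ (the definition requires $\sqrt{\dist(z,\tau)^2 - \dist(z,\partial D)^2} \leq C$ for the relevant $C$, which here is automatic, and $\dist(z,\partial D) = \lambda(C)$ with $C = \sqrt{t_n^2 - \lambda(C't_n)^2} \approx t_n$) requires care with the precise form of the $\lambda$-Stolz region definition. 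The other two parts are essentially bookkeeping on top of Lemma \ref{AJmuBound} and the already-established layer-cake machinery.
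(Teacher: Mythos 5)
Your treatment of Parts (1) and (2) is essentially the paper's own proof: reduce to $f_{red}=\int_{[-1,1]}\frac{d\mu(t)}{t-z}$ via Lemma \ref{splitupterms}, write $J_f$ by Lemma \ref{JQint}, split into a near piece controlled by $y^2\geq D\gamma(C|x|)$ together with a bound of the form $\mu(-s,s)\leq K\gamma(Cs)$, and a far piece controlled by $\int t^{-2}\,d\mu<\infty$ (available because $\gamma$ is $O(t^2)$); and in (2) apply the upper bound of Lemma \ref{AJmuBound} with $\lambda(t)=\gamma(Ct)/Ct$, so that $\ep\lambda(\ep)=\gamma(C\ep)/C$ and the leading term is $\mu(-2\ep,2\ep)/\gamma(C\ep)$ up to constants. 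One simplification: the needed estimate $\mu(-s,s)\leq K\gamma(Cs)$ is a one-line consequence of monotonicity of $\gamma$ and $\mu$-integrability of $1/\gamma(C|t|)$ (this is exactly how the paper argues in its Part (2)); the Riemann-sum contradiction in the style of Lemma \ref{isgammaaugury} that you propose is not needed, though it would also work.

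Part (3) follows the paper's construction, but your concrete normalization is off. You correctly list the constraints ($\sum c_n/\gamma(Ct_n)<\infty$ for some $C$, yet $c_n/\lambda(\cdot)^2\to\infty$), but the proposed choice $c_n\sim\gamma(t_n)$ does not satisfy the first one in general: for $\gamma(t)=t^2$ every term of $\sum c_n/\gamma(Ct_n)$ equals $1/C^2$, and the phrase ``$c_n/\gamma\sim1$ summable'' is self-contradictory. The repair is the paper's damping: choose $t_n\to0$ with $\gamma(t_n)/\lambda(t_n)^2\geq 2^n n^2$ (possible since $\lambda$ is $o(\sqrt{\gamma})$) and set $\mu=\sum\frac{\gamma(t_n)}{n^2}\delta_{t_n}$, so $\int\gamma^{-1}\,d\mu=\sum n^{-2}<\infty$ while $J_f(t_n+i\lambda(t_n))\geq\gamma(t_n)/(n^2\lambda(t_n)^2)\geq 2^n$. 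Also, the ``main obstacle'' you flag is not one: every term of $\int\frac{d\mu(t)}{(t-x)^2+y^2}$ is nonnegative, so the single mass at $t_n$ already gives the required lower bound and no control of the cross terms $\sum_{m\neq n}c_m/((t_m-t_n)^2+\lambda^2)$ is needed, since you only need unboundedness on $S^{\lambda}_{\tau}$, not an upper bound. Membership of $t_n+i\lambda(t_n)$ in $S^{\lambda}_0$ is immediate from the definition by taking the parameter $C=t_n$, which avoids the bookkeeping you were worried about.
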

    \begin{proof}
            (1): Without loss of generality, assume
                $\frac{1}{\gamma(t/2)}$ is integrable with respect to $\mu(t).$
            Let $z\in S^{\sqrt{D\gamma(t)}}_{\tau}.$
            Without loss of generality, by Lemma \ref{splitupterms},
            $$J_{f}(z) = \int_{[-1,1]} \frac{1}{(x-t)^2 + y^2} \, d\mu$$
            So, since $y \geq \sqrt{\gamma(x)}$, and using the bound of \eqref{integrandBound}, we have
                $$J_{f}(z) \leq
                \int_{[-1,1]} \frac{1}{(t/2)^2}\, d\mu(t) + 
                \int_{[-2x,2x]} \frac{\gamma(t/2)}{\gamma(x)} \frac{d \mu(t)}{\gamma(t/2)}.$$
            Note,
                $$\frac{d\mu(t)}{\gamma(t/2)}$$
            is a finite measure. So we see that 
            $J_{f}$ is bounded on $S^{\sqrt{D\gamma(t)}}_{\tau}.$
            
            \smallskip
            
            (2):
            Without loss of generality,
            $\int_{[-1,1]} \frac{1}{\gamma(|t|)}\dd \mu(t)$ exists  and is finite.
            Now consider
            $$AJ^\tau_{\lambda, f}(t) \leq D\frac{\mu(2t,-2t)}{t\gamma(2t)/2t} + C$$
            by Lemma \ref{AJmuBound}.
            Since $1/\gamma(|t|)$ is integrable with respect to $\mu,$ it must be that
            $\mu(-t,t) / \gamma(t)$ is bounded, since it gives a lower bound for the integral $\int_{[-1,1]} \frac{1}{\gamma(|t|)}\dd \mu(t).$
            
            \smallskip
            
            (3): Consider a function $\lambda$ which for some $C>0$ is $o(\sqrt{C\gamma}).$
	Without loss of generality $C=1.$
            We want to construct an $f$ which is $\gamma$-regular and that has
            $\frac{1}{\gamma(t)}$ is integrable with respect to the corresponding $\mu(t).$
            Take a sequence $t_n\rightarrow 0$ such that for every $C>0,$
            $\gamma(t_n)/\lambda(t_n)^2 \geq 2^n n^2.$
            Define a measure $\mu = \sum \frac{\gamma(t_n)}{n^2}\delta_{t_n}.$
		Note that $$\int \frac{1}{\gamma(t)} \dd \mu(t) = \frac{\pi^2}{6}<\infty.$$
	    So, the corresponding $f$ is, in fact, $\gamma$-regular.
		Further note:
            $$J_{f}(z) \geq \int_{[-1,1]} \frac{1}{(x-t)^2 + y^2} d \mu(t).$$
            So,
            $$J_{f}(t_n+ i\lambda(t_n)) \geq \frac{\gamma(t_n)}{\lambda(t_n)^2n^2} \geq 2^n,$$
            which tends to infinity, and we are done.
    \end{proof}
    
    We immediately obtain Part (2) of Theorem \ref{mainthm}, a ``perfect"
	Julia-Carath\'eodory type theorem.
    \begin{corollary}\label{supercor}
        Let $\gamma$ be  $O(t^2)$ such that $\sqrt{\gamma(t)}$ is a 
        $\gamma$-augury.
	Let $f: \Pi \rightarrow \cc \Pi$ be an analytic function. The following are equivalent:
	\begin{enumerate}
		\item the function $f$ is $\gamma$-regular,
		\item $J_{f}^\tau(z)$ is bounded on 
                $S^{\sqrt{D\gamma(Ct)}}_{\tau}$ as $z\rightarrow 0$ for some $C,D>0,$
		\item
		$$\limsup_{t\rightarrow 0} J_{f}^\tau(t+ iD\gamma(C|t|)) < \infty.$$
	\end{enumerate}
    \end{corollary}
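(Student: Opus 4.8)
The plan is to establish the cycle $(2)\Rightarrow(1)\Rightarrow(3)\Rightarrow(2)$; the remaining implication $(1)\Rightarrow(2)$ is already a special case of Theorem~\ref{pittingtheorem}(1). Throughout I take $\tau=0$. First I would use Lemma~\ref{splitupterms} and Lemma~\ref{vacuous} to reduce to $f=f_{red}=\int_{[-1,1]}\frac{1}{t-z}\,\dd\mu(t)$: since $J_f=J_{f_{trivial}}+J_{f_{red}}$ by Lemma~\ref{jadd}, since $J_{f_{trivial}}$ is continuous, hence bounded, on a neighborhood of $0$, and since $f$ is $\gamma$-regular iff $f_{red}$ is, all three conditions transfer between $f$ and $f_{red}$. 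In the reduced setting Lemma~\ref{JQint} gives $J_f(x+iy)=\int_{[-1,1]}\frac{\dd\mu(t)}{(t-x)^2+y^2}$, which is decreasing in $y$ for fixed $x$, and $\gamma$-regularity together with $\gamma=O(t^2)$ makes $\int_{[-1,1]}\frac{\dd\mu(t)}{t^2}<\infty$.

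For $(2)\Rightarrow(1)$: the property of being a $\gamma$-augury survives the substitution $t\mapsto Ct$ and multiplication by a positive constant, so the standing hypothesis makes $\lambda(t):=\sqrt{D\gamma(Ct)}$ a $\gamma$-augury. The arcs $C_d$ that decompose $S^\lambda_0$ all lie in $S^\lambda_0$ and shrink to $0$, so if $J_f$ is bounded on $S^\lambda_0$ near $0$ then $AJ^0_{f,\lambda}(d)=\frac{1}{|C_d|}\int_{C_d}J_f\,\dd|z|$ is bounded as $d\to0$, and Theorem~\ref{mainthmPrime} gives that $f$ is $\gamma$-regular.

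For $(1)\Rightarrow(3)$: fix $C'$ with $\int_{[-1,1]}\frac{\dd\mu(t)}{\gamma(C'|t|)}<\infty$ and, for a large constant $C$ to be chosen, split $[-1,1]=\{|t|\ge2|x|\}\cup\{|t|\le|x|/2\}\cup\{|x|/2<|t|<2|x|\}$ in the integral for $J_f(x+i\gamma(C|x|))$. On the first two pieces $|t-x|$ dominates the denominator and the contributions are bounded by fixed multiples of $\int\frac{\dd\mu(t)}{t^2}$. On the annulus the denominator is controlled only by $\gamma(C|x|)^2$, but there $\mu(\{|x|/2<|t|<2|x|\})\le\gamma(2C'|x|)\int_{\{0<|t|<2|x|\}}\frac{\dd\mu(t)}{\gamma(C'|t|)}=\gamma(2C'|x|)\,\varepsilon(|x|)$ with $\varepsilon(s)\to0$ as $s\to0$, so the annular contribution is at most $\frac{\gamma(2C'|x|)}{\gamma(C|x|)^2}\,\varepsilon(|x|)$. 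It then remains to check that $\gamma(2C'|x|)/\gamma(C|x|)^2$ stays bounded (indeed tends to $0$) for $C$ large relative to $C'$, and this is exactly where the hypothesis enters: unwinding the statement that $\sqrt\gamma$ is a $\gamma$-augury via $\frac{\dd\gamma}{\gamma^2}=-\dd\frac1\gamma$ yields the quantitative decay $\gamma(C_0t)=o(\gamma(t)^2)$ for the augury constant $C_0$, so $C=2C'/C_0$ works. One concludes $\limsup_{x\to0}J_f(x+i\gamma(C|x|))\le 5\int_{[-1,1]}\frac{\dd\mu(t)}{t^2}<\infty$, which is $(3)$ with $D=1$.

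Finally $(3)\Rightarrow(2)$ is easy: monotonicity of $y\mapsto J_f(x+iy)$ upgrades boundedness along the curve $x\mapsto x+iD\gamma(C|x|)$ near $0$ to boundedness of $J_f$ on $\{x+iy:y\ge D\gamma(C|x|)\}$ near $0$, and since $\gamma(C|x|)\to0$ this region contains $S^{\sqrt{\gamma(Ct)}}_0$ near $0$, giving $(2)$ with the same $C$ and $D=1$. The main obstacle is the middle step: the piecewise estimate of the Cauchy integral is routine, but extracting the pointwise decay $\gamma(C_0t)=o(\gamma(t)^2)$ from the integral augury condition — the mechanism that makes amortization superfluous in this regime — requires care, especially when the relevant quotients are not themselves monotone.
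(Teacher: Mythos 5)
Your outer implications are essentially right and match the intended route: for $(2)\Rightarrow(1)$, the arcs $C_d$ lie in $S^{\lambda}_0$ for $\lambda(t)=\sqrt{D\gamma(Ct)}$, this $\lambda$ is again a $\gamma$-augury (rescaling in $C$ and multiplication by $\sqrt D$ are harmless), so boundedness of $J_f$ there bounds $AJ^0_{f,\lambda}$ and Theorem~\ref{mainthmPrime} applies; for $(3)\Rightarrow(2)$, monotonicity of $y\mapsto J_f(x+iy)$ from Lemma~\ref{JQint} (after the reduction to $f_{red}$) does the job, exactly as in the paper's one-line justification. The problem is your middle step $(1)\Rightarrow(3)$, and it is a genuine gap, not a technicality you can wave at. Your annulus estimate correctly reduces everything to a pointwise comparison of the form $\gamma(2C'|x|)\le K\,\gamma(C|x|)^2$ for some large $C$, equivalently $\gamma(ct)=O(\gamma(t)^2)$ near $0$ for some small $c>0$; but the claim that this ``follows by unwinding'' the hypothesis that $\sqrt{\gamma}$ is a $\gamma$-augury, in the strong form $\gamma(C_0t)=o(\gamma(t)^2)$ with the augury constant itself, is asserted and never proved, and it does not come out of the unwinding you describe. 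What the integral condition $\int_0^1 t\sqrt{\gamma(C_0t)}\,\dd\gamma(t)/\gamma(t)^2<\infty$ yields directly (integrate over $(t,T(t)]$, where $T(t)$ is the first point at which $\gamma$ doubles, and use that this is a tail of a convergent integral) is $t\sqrt{\gamma(C_0t)}/\gamma(t)\to 0$, i.e.\ $\gamma(C_0t)=o\bigl(\gamma(t)^2/t^2\bigr)$. That is short of what you need by exactly the factor $t^2$, and that factor is precisely what your annular term $\gamma(2C'|x|)\varepsilon(|x|)/\gamma(C|x|)^2$ cannot absorb; combining with $\gamma=O(t^2)$ or iterating the crude bound only gives statements like $\gamma(C_0t)=o(\gamma(t))$, still far from $\gamma(t)^2$. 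Since $\gamma$ is only assumed monotone, the quotient $\gamma(ct)/\gamma(t)^2$ need not behave monotonically, so there is no soft upgrade from the integral condition to the pointwise one: ruling out, say, near-plateaus or slowly (geometrically) growing stretches of $\gamma$ at a sparse sequence of scales requires a real argument that exploits both the augury integral and $\gamma=O(t^2)$ at the left ends of such stretches.

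So as written your cycle breaks at $(1)\Rightarrow(3)$, and with it $(2)\Rightarrow(3)$. You have, in fact, correctly located the crux: the paper's own proof of Corollary~\ref{supercor} disposes of $(2)\Leftrightarrow(3)$ with the monotonicity remark, which settles $(3)\Rightarrow(2)$ but leaves the converse resting on the containment of the curve $t+iD\gamma(C|t|)$ in some region $S^{\sqrt{D'\gamma(C't)}}_0$ near $0$ --- the same comparison $\gamma(C't)\lesssim\gamma(C''t)^2$ that you need. For the motivating examples such as $\gamma(t)=e^{-1/t}$ this comparison is immediate, which is presumably why the paper does not dwell on it; but a complete proof along your lines (or the paper's) requires an explicit lemma of the form ``if $\sqrt\gamma$ is a $\gamma$-augury and $\gamma=O(t^2)$, then $\gamma(ct)=O(\gamma(t)^2)$ for some $c>0$,'' and your proposal neither states it carefully nor proves it.
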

	The equivalence of (1) and (2) follows directly from part (1) of Theorem \ref{pittingtheorem}, and the equivalence of (2) and (3) follows from
	the fact that $J_f(z)$ is monotone on vertical lines as we approach the real axis for functions as in Lemma \ref{JQint}.

	Moreover, we also obtain Part (3) of Theorem \ref{mainthm} as a corollary of Theorem \ref{pittingtheorem}, which grants us the ability to preordain the choice of $\gamma$-augury.
    \begin{corollary}\label{uniformcor}
        Let $\gamma$ be  $O(t^2)$ such that $\gamma(t)/t$ is a 
        $\gamma$-augury.
            An analytic function
                $f: \Pi \rightarrow \cc \Pi$
                is $\gamma$-regular
                if and only if
                $AJ_{f,\gamma(Ct)/Ct}^\tau(d)$ is bounded on 
                $S^{\gamma(Ct)/Ct}_{\tau}$ as $d\rightarrow 0$ for some $C>0.$
    \end{corollary}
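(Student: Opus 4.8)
The plan is to obtain this corollary as a repackaging of two results already established: Theorem \ref{mainthmPrime} (Part (1) of Theorem \ref{mainthm}) and Part (2) of Theorem \ref{pittingtheorem}. As usual we first reduce to $\tau = 0$ by the cone structure, and then, via Lemma \ref{splitupterms}, to the case $f = f_{red} = \int_{[-1,1]} \frac{1}{t-z}\,d\mu(t)$, so that the integral formula of Lemma \ref{JQint} and the augur lemma (Lemma \ref{AJmuBound}) apply verbatim; both $\gamma$-regularity and boundedness of the relevant amortized Julia quotient are insensitive to this reduction by parts (A) and (B) of Lemma \ref{splitupterms}.

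For the forward implication, suppose $f$ is $\gamma$-regular. Then Part (2) of Theorem \ref{pittingtheorem} directly produces a $C > 0$ such that $AJ^\tau_{f,\gamma(Ct)/Ct}(d)$ is bounded as $d\to 0$; since the arcs $C_d$ decompose the region $S^{\gamma(Ct)/Ct}_\tau$, as noted when $AJ$ was defined, the qualifier ``on $S^{\gamma(Ct)/Ct}_\tau$'' carries no extra content. So this direction is essentially a citation.

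For the converse, the one point that needs checking is that the profile $\lambda_C(t) := \gamma(Ct)/(Ct)$ is itself a $\gamma$-augury whenever $\gamma(t)/t$ is. Unwinding the definition, $\gamma(t)/t$ being a $\gamma$-augury means it is $o(t)$ and that $t\cdot\frac{\gamma(C't)}{C't}\cdot\frac{\dd\gamma(t)}{\gamma(t)^2}$ is integrable on $[0,1)$ for some $C' > 0$; replacing $C'$ by $C'/C$ gives integrability of the analogous integrand built from $\lambda_C$, and $\lambda_C(t) = \gamma(Ct)/(Ct)$ is $o(t)$ because, after the substitution $s = Ct$, it is the $o(s)$ function $\gamma(s)/s$ (with $\gamma$ being $O(t^2)$ keeping everything controlled near $0$). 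Granting this, if $AJ^\tau_{f,\lambda_C}(d)$ is bounded as $d\to 0$ for some $C$, then $\lambda = \lambda_C$ is a $\gamma$-augury along which the amortized Julia quotient is bounded, and Theorem \ref{mainthmPrime} immediately yields that $f$ is $\gamma$-regular.

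The \textbf{main obstacle}, such as it is, is purely bookkeeping: tracking the multiplicative constants through the substitution showing $\lambda_C$ is a $\gamma$-augury and through the ``for some $C$'' in the statements of Theorems \ref{mainthmPrime} and \ref{pittingtheorem}. There is no new analytic input here; the genuine content of the corollary is that in this regime one may \emph{preordain} $\gamma(Ct)/Ct$ as a universal augury profile, rather than having to artisanally construct a $\gamma$-augury as in Part (1) of Theorem \ref{mainthm}.
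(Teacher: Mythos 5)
Your proof is correct and follows essentially the same route as the paper, which obtains the corollary by citing Theorem \ref{pittingtheorem} Part (2) for the forward direction and Theorem \ref{mainthmPrime} for the converse. The only detail you add beyond the paper's one-line derivation is the (correct) verification that $\gamma(Ct)/(Ct)$ remains a $\gamma$-augury, which the paper leaves implicit since the ``for some $C$'' in the augury definition absorbs the rescaling.
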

  
\subsection{$\gamma$-horocyclic continuity}
Along the lines of Theorem \ref{pittingtheorem}, one can establish the $\gamma$-analogue of \emph{horocyclic continuity} of the function.
We say a function is \dfn{$\gamma$-horocyclically continuous} whenever
for each $D>0,$
		\[\sup_{S^{D\gamma(Ct)}_{\tau}\cap B(\tau, 1/D)} |f(z)-f(\tau)|<\infty,\]
and, moreover,
 $$\lim_{D\rightarrow \infty} \sup_{S^{D\gamma(Ct)}_{\tau}\cap B(\tau, 1/D)} |f(z)-f(\tau)| = 0$$
for some $C>0.$
We will show that any $\gamma$-regular function is $\gamma$-horocyclically continuous, and moreover that
for any $\la(t)$ which is $o(\gamma(Ct))$ for every $C>0$ { there can be no such guarantee.}
Classical horocyclic continuity, where $\gamma(t)=t^2$ was established using the Julia inequality \ref{thejuliainequality} in \cite{ju20}.

\begin{theorem}\label{horotheorem}
Let $\gamma$ be  $O(t^2)$ be a monotone increasing function.
\begin{enumerate}
\item  For any $\gamma$-regular function $f: \Pi \rightarrow \cc \Pi,$ $f$ is $\gamma$-horcyclically continuous. That is, 
        there exists a $C>0$ such that
	for each $D>0,$
		\[\sup_{S^{D\gamma(Ct)}_{\tau}\cap B(\tau, 1/D)} |f(z)-f(\tau)|<\infty,\]
	and, moreover,
        $$\lim_{D\rightarrow \infty} \sup_{S^{D\gamma(Ct)}_{\tau}\cap B(\tau, 1/D)} |f(z)-f(\tau)| = 0.$$
\item For any $\lambda$ which is $o(\gamma(Ct))$ for some $C>0$, there is a $\gamma$-regular function $f: \Pi \rightarrow \cc \Pi,$
such that
        $$\sup_{S^\lambda_{\tau}} |f(z)-f(\tau)| = \infty.$$
\end{enumerate}
\end{theorem}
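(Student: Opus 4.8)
The plan is to follow the template of Theorem \ref{pittingtheorem}. By Lemma \ref{splitupterms} we may take $\tau = 0$ and $f = f_{red} = \int_{[-1,1]} \frac{1}{t-z}\,d\mu(t)$: the summand $f_{trivial}$ is analytic at $0$, hence uniformly continuous on $B(0,1/D)$ as $D \to \infty$, and $f$ is $\gamma$-regular exactly when $f_{red}$ is. Since $\gamma$-regularity gives a $C_0 > 0$ with $1/\gamma(C_0|t|)$ integrable against $\mu$, after replacing $\gamma(t)$ by $\gamma(2C_0 t)$ (still monotone increasing and $O(t^2)$) we may assume $1/\gamma(|t|/2)$ is $\mu$-integrable and prove the statement for $S^{D\gamma(t)}_0 \cap B(0,1/D)$, the constant $C$ reappearing when we rescale back. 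In particular $1/|t|$ and $1/t^2$ are $\mu$-integrable, $\mu$ has no atom at $0$, and $f(0) := \int t^{-1}\,d\mu(t)$ is the finite nontangential boundary value (dominated convergence along any nontangential sequence).

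For Part (1), observe that $z = x + iy \in S^{D\gamma(t)}_0$ means $y \geq D\gamma(|x|)$, while $z \in B(0,1/D)$ gives $|z| < 1/D$. Writing
\[
f(z) - f(0) = \int_{[-1,1]} \left( \frac{1}{t-z} - \frac{1}{t} \right) d\mu(t),
\]
I split at $|t| = 2|x|$. On $\{|t| > 2|x|\}$ one has $|t - z| \geq |t - x| \geq |t|/2$, so the integrand is at most $\frac{|z|}{|t||t-z|} \leq \frac{2|z|}{t^2}$ and this piece is bounded by $\frac{2}{D}\int_{[-1,1]} t^{-2}\,d\mu$. On $\{|t| \leq 2|x|\}$ I use the crude bound $\big|\frac{1}{t-z} - \frac{1}{t}\big| \leq \frac{1}{|t-z|} + \frac{1}{|t|} \leq \frac{1}{y} + \frac{1}{|t|}$; the first term integrates to $\frac{\mu(\{|t| \leq 2|x|\})}{y} \leq \frac{\mu(\{|t| \leq 2|x|\})}{D\gamma(|x|)} \leq \frac{1}{D}\int_{\{|t| \leq 2|x|\}} \frac{d\mu(t)}{\gamma(|t|/2)}$, using $\gamma(|t|/2) \leq \gamma(|x|)$ there, and the second integrates to $\int_{\{|t| \leq 2/D\}} |t|^{-1}\,d\mu$. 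All three bounds are uniform in $z$ over the region and tend to $0$ as $D \to \infty$ (the first and third by the integrability arranged above together with dominated convergence and $\mu(\{0\}) = 0$; the middle by the $1/D$ factor), which is exactly $\gamma$-horocyclic continuity.

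For Part (2), write $\lambda(t) = o(\gamma(Ct))$; rescaling $\gamma$ reduces us to $\lambda(t)/\gamma(t) \to 0$. Pick a decreasing sequence $t_n \to 0$ in $(0,1]$ with $\gamma(t_n)/\lambda(t_n) \geq n^3$ (the ratio tends to $\infty$ along every null sequence; if $\lambda(t_n) = 0$ just set $\ell_n := \gamma(t_n)/n^3$, otherwise $\ell_n := \lambda(t_n)$), and put $\mu = \sum_n \frac{\gamma(t_n)}{n^2}\delta_{t_n}$. This is a finite positive measure on $(0,1]$ with $\int \gamma(|t|)^{-1}\,d\mu = \sum 1/n^2 < \infty$, so the associated $f$ is $\gamma$-regular. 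The point $z_n = t_n + i\ell_n$ lies in $S^\lambda_0$ (take $C = t_n$ in the definition), and since $f(0) \in \R$ and every term of $\IM f(z) = \sum_n \frac{\gamma(t_n)}{n^2}\cdot\frac{y}{(t_n - x)^2 + y^2}$ is nonnegative,
\[
|f(z_n) - f(0)| \geq \IM f(z_n) \geq \frac{\gamma(t_n)}{n^2}\cdot\frac{\ell_n}{\ell_n^2} = \frac{\gamma(t_n)}{n^2 \ell_n} \geq n,
\]
so $\sup_{S^\lambda_0} |f(z) - f(0)| = \infty$.

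The step I expect to be the main obstacle is the $\{|t| \leq 2|x|\}$ estimate in Part (1): factoring out $|z|$ and writing $|f(z) - f(0)| \leq |z|\int \frac{d\mu}{|t||t-z|}$ fails, because $|z|/y$ is unbounded on a horocyclic region once $\gamma$ decays faster than $t^2$. The remedy is to feed the Stolz constraint $y \geq D\gamma(|x|)$ directly into $\frac{1}{|t-z|} \leq \frac{1}{y}$ and exploit monotonicity of $\gamma$ to bound $\mu(\{|t| \leq 2|x|\})/\gamma(|x|)$ by the integrable quantity $\int \frac{d\mu(t)}{\gamma(|t|/2)}$; this mechanism is precisely what pins down the shape $S^{D\gamma(Ct)}_\tau$ of the horocyclic regions. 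The remaining ingredients — the cone decomposition of Lemma \ref{splitupterms}, the convolution formula of Lemma \ref{JQint}, and routine dominated convergence — are already in hand.
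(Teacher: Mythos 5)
Your argument is correct, and Part (2) is essentially the paper's own construction: a lacunary sequence $t_n$ with $\gamma(t_n)/\lambda(t_n)$ growing, the atomic measure $\mu=\sum \frac{\gamma(t_n)}{n^2}\delta_{t_n}$, and the lower bound coming from the single atom's contribution to $\IM f(t_n+i\ell_n)$; your handling of the degenerate case $\lambda(t_n)=0$ and the rescaling in the inner constant (harmless, since $\gamma$-regularity is defined only up to that constant) are fine. Part (1), however, is organized differently from the paper. The paper normalizes $\int \gamma(|t|)^{-1}\,\dd\mu=1$, observes the statement is convex in $\mu$, reduces to the extreme points $\mu=\gamma(t_0)\delta_{t_0}$ (i.e.\ to the single functions $\gamma(t_0)/(t_0-z)$), proves a bound $M_D(t_0)$ uniform in $t_0$ by a three-case analysis in $(x,y)$, and then integrates $M_D$ against $\mu$ with a convergence argument to recover the limit as $D\to\infty$. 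You instead estimate $\int\bigl(\frac{1}{t-z}-\frac{1}{t}\bigr)\dd\mu$ directly, splitting the $t$-integral at $|t|=2|x|$: on the far range the kernel bound $|t-z|\geq|t|/2$ gives an $O(1/D)\int t^{-2}\dd\mu$ term, and on the near range the horocyclic constraint $y\geq D\gamma(|x|)$ together with monotonicity of $\gamma$ converts $\mu(\{|t|\leq 2|x|\})/\gamma(|x|)$ into the integrable quantity $\int \gamma(|t|/2)^{-1}\dd\mu$, with the leftover $\int_{\{|t|\leq 2/D\}}|t|^{-1}\dd\mu$ killed by dominated convergence since $\mu(\{0\})=0$. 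Both proofs rest on the same two mechanisms (the near/far dichotomy relative to $x$ and the inequality $y\geq D\gamma(|x|)$ plus $\gamma$ monotone and $O(t^2)$), but your version avoids the extreme-point reduction and the uniform-in-$t_0$ sup computation, and it yields slightly more: two of your three error terms are explicitly $O(1/D)$, while the paper's route only gives the qualitative limit via monotone convergence. The reductions you invoke (Lemma \ref{splitupterms} to pass to $f_{red}$, translation to $\tau=0$, absorption of the regularity constant into $\gamma$) are all legitimate and match the paper's use of the same lemmas.
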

\begin{proof}
 Without loss of generality, let $\tau =0$.

(1):
We will show that for an analytic function $f: \Pi \rightarrow \cc \Pi$ such that 
$\int \frac{1}{\gamma(t)} \dd\mu(t) = 1$ that the claim holds, where $\mu$ is the measure corresponding to $f$.
The condition is convex, so it is sufficient to check that the claim holds at the extreme points $\mu = \gamma(t_0)\delta_{t_0}$
which give rise to the corresponding functions $\frac{\gamma(t_0)}{t_0-z}.$
It is obvious that
		$$\lim_{D\rightarrow \infty} \sup_{S^{D\gamma(Ct)}_{0}\cap B(0, 1/D)} \abs{\frac{\gamma(t_0)}{t_0-z}-\frac{\gamma(t_0)}{t_0}} = 0$$
for each such $t_0$. However, one must show that \[M_{D}(t_0) = \sup_{S^{D\gamma(Ct)}_{0}\cap B(0, 1/D)} \abs{\frac{\gamma(t_0)}{t_0-z}-\frac{\gamma(t_0)}{t_0}}\]
is uniformly bounded in $t_0$ for each $D$ for that statement to have content. (Namely,
in general,
$ \sup_{S^{D\gamma(Ct)}_0\cap B(0, 1/D)} |\int \frac{1}{t-z} - \frac{1}{t}\dd \mu(t)|\leq \int M_{D}(t)\dd \mu(t),$ and so, if the
right hand side is integrable, we win by an application of the monotone convergence theorem.)

Choose $C = 2.$
Consider 
$$\sup_{S^{D\gamma(2t)}_{0}\cap B(0, 1/D)} \abs{\frac{\gamma(t_0)}{t_0-z} - \frac{\gamma(t_0)}{t_0}}.$$
We want to maximize $\abs{\frac{\gamma(t_0)z}{(t_0-z)t_0}}.$
Write $z = x+iy$ so that
$$\abs{\frac{\gamma(t_0)z}{(t_0-z)t_0}}^2 = \frac{\gamma(t_0)^2}{t_0^2} \frac{x^2 + y^2}{(t_0-x)^2+y^2}.$$
The argumentation will go by cases.
If $y>|x|,$ then the quantity is obviously bounded.
If $y \leq |x|$ and $|x| \leq \frac{3}{2}t_0,$ then, since $y>D\gamma(t_0),$ as $z \in S^{D\gamma(2t)}_{0},$ we see that
$$\abs{\frac{\gamma(t_0)z}{(t_0-z)t_0}}^2\leq \frac{\gamma(t_0)^2}{t_0^2} \frac{\frac{9}{2}t_0^2}{D^2\gamma(t_0)^2}
\leq \frac{9}{2D^2}.$$
If $y \leq |x|$ and $|x| > \frac{3}{2}t_0,$ then, we see that
$$\abs{\frac{\gamma(t_0)z}{(t_0-z)t_0}}^2\leq \frac{\gamma(t_0)^2}{t_0^2} \frac{2x^2}{(x/2)^2} \leq  2\frac{\gamma(t_0)^2}{t_0^2}$$
which is bounded by the assumption that $\gamma(t)$ is $O(t^2).$
 
\smallskip

(2): { The construction is similar to the construction proving Theorem \ref{pittingtheorem} Part (3).}
Let $\la$ be $o(\gamma(Ct))$ for some $C> 0$. Fix a sequence $t_n\rightarrow 0$ such that $\gamma(t_n)/\lambda(t_n) > 2^n n^2$.
Now construct the measure $\mu = \sum \frac{\gamma(t_n)}{n^2}\delta_{t_n}$, and let 
$f(z) = \int \frac{1}{t-z} \dd \mu(t).$
Then \[\abs{f(t_n+ i\lambda(t_n))} \geq \frac{\gamma(t_n)}{\lambda(t_n)n^2} > 2^n\]
since  $$\frac{\gamma(t_n)}{\lambda(t_n)n^2} = \text{Im } \left(\frac{1}{t_n-z}  \frac{\gamma(t_n)}{n^2}\right)\bigg|_{z = t_n+ i\lambda(t_n)}.$$
\end{proof}

Equipped with the horocyclic continuity theorem one can prove the following, which says that $\gamma$-regularity is preserved under reasonable compositions, including automorphisms of the upper half plane which do not move $f(\tau)$ to infinity.
\begin{theorem}\label{perturbationtheorem}
Let $\gamma$ be  $O(t^2)$ be a monotone increasing function such that $\gamma$ is a $\gamma$-augury.
Let $f: \Pi \rightarrow \cc \Pi$ which has a nontangential limit $f(\tau).$
Let $U$ be a neighborhood of $f(\tau).$
Let $g: \Pi \cup U \rightarrow \mathbb{C}$ be a nonconstant analytic function such that
$g(\Pi) \subseteq \cc \Pi$ and $g|_{\mathbb{R}\cap U}$ is real-valued.
Then, $f$ is $\gamma$-regular if and only if $g\circ f$ is $\gamma$-regular.
\end{theorem}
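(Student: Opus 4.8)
The plan is to reduce the statement to a local comparison between $f$ and $g \circ f$ near the boundary point $\tau$, exploiting $\gamma$-horocyclic continuity (Theorem \ref{horotheorem}) to control $f$ on a small $\lambda$-Stolz region, and then to transfer $\gamma$-regularity across the analytic map $g$ by comparing Nevanlinna measures. Since $\gamma$ is a $\gamma$-augury by hypothesis, Part (1) of Theorem \ref{horotheorem} applies: there is a $C > 0$ so that $f$ maps $S^{D\gamma(Ct)}_\tau \cap B(\tau, 1/D)$ into an arbitrarily small neighborhood of $f(\tau)$ as $D \to \infty$. Choosing $D$ large, we may assume $f$ maps this set into the neighborhood $U$ on which $g$ is defined and analytic, so that $g \circ f$ makes sense and is bounded there. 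Without loss of generality take $\tau = 0$ and $f(\tau) = 0$ (pre- and post-composing by real translations, which preserve everything in sight by the cone structure and Lemma \ref{jadd}); note $g$ real-valued on $\mathbb{R} \cap U$ forces $g(\Pi \cap U) \subseteq \cc\Pi$ locally, so $g$ is itself (locally) a Pick function fixing $0$, and $g(0) = 0$.

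The key estimate is a two-sided comparison of Julia quotients: on the region where $f(z) \in U$, we have
\[
J_{g \circ f}(z) = \frac{\im g(f(z))}{\im z} = \frac{\im g(f(z))}{\im f(z)} \cdot \frac{\im f(z)}{\im z} = J_g(f(z)) \cdot J_f(z).
\]
Because $g$ is analytic and nonconstant on a neighborhood of $0$ with $g(0) = 0$ and $g$ real on the real axis, the quotient $\im g(w)/\im w = J_g(w)$ is bounded above and below by positive constants for $w$ in a sufficiently small neighborhood of $0$ (it extends to a positive real-analytic function there, as in the proof of Lemma \ref{vacuous}, with value $g'(0) > 0$ at the origin — here we use nonconstancy to rule out $g'(0) = 0$, or if $g'(0) = 0$ one argues with the first nonvanishing derivative and the local degree, still getting comparability on a horocyclic region after shrinking). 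Feeding this into the displayed identity, $J_{g\circ f}(z)$ and $J_f(z)$ are comparable (up to multiplicative constants) on $S^{D\gamma(Ct)}_\tau \cap B(\tau,1/D)$ once $D$ is large enough that $f$ lands in the good neighborhood. Part (1) of Theorem \ref{mainthm} / Part (2) of Theorem \ref{pittingtheorem} then shows that boundedness of the (amortized) Julia quotient on such a region is equivalent to $\gamma$-regularity, so $f$ is $\gamma$-regular iff $g\circ f$ is.

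The main obstacle is the case $g'(\tau) = 0$: then $J_g(w) = \im g(w)/\im w$ is \emph{not} bounded below near $0$, so the comparison $J_{g\circ f} \asymp J_f$ fails and one only gets $J_{g\circ f} \le \mathrm{const} \cdot J_f$, giving one implication for free but not the converse. To handle this I would observe that $g$ real-valued on $\mathbb{R}\cap U$ and nonconstant means $g(w) = c\, w^m(1 + o(1))$ near $0$ with $c \in \mathbb{R}\setminus\{0\}$ and $m \ge 1$ the order of the zero, and since $g$ maps $\Pi$ into $\cc\Pi$ locally one must in fact have $m = 1$ unless $g$ is not a local self-map — so the hypotheses secretly force $g'(0) \ne 0$. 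Making this rigorous (that a Pick-type function real on an interval and mapping the upper half plane in has nonvanishing derivative at an interior real point where it vanishes, equivalently that $g$ is a local conformal map at $\tau$) is the one genuinely delicate point; it follows from the local mapping theorem together with the reflection principle applied across $\mathbb{R}\cap U$, which shows $g$ extends to a full neighborhood of $\tau$ in $\C$ as an analytic function, and a nonconstant analytic map sending $\Pi$-side to $\cc\Pi$-side and $\mathbb{R}$-side to $\mathbb{R}$-side cannot have a critical point on the real axis. Once that is in hand, the comparability of $J_g$ near $\tau$ and hence the equivalence follow as above.
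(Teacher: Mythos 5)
Your overall strategy is the same as the paper's: use $\gamma$-horocyclic continuity (Theorem \ref{horotheorem}) to force $f$ into a small neighborhood $V$ of $f(\tau)$ on a $\gamma$-type region, note that there $J_{g\circ f}(z)=J_g(f(z))\,J_f(z)$ with $J_g$ pinched between positive constants (equivalently $g(w)\approx b\,w$ near $f(\tau)$, $b=g'(f(\tau))$), and transfer boundedness of the (amortized) Julia quotient via Theorem \ref{mainthm}(1). Your justification that $g'(f(\tau))\neq 0$ (reflection across $\mathbb{R}\cap U$ plus the local mapping theorem) is correct and is a point the paper leaves implicit, so that portion is a genuine improvement in rigor.

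The gap is at the transfer step. You assert that ``boundedness of the (amortized) Julia quotient on such a region [$S^{D\gamma(Ct)}_{\tau}\cap B(\tau,1/D)$] is equivalent to $\gamma$-regularity,'' citing Theorem \ref{mainthm}(1) and Theorem \ref{pittingtheorem}(2). That equivalence is false, already in the classical case $\gamma(t)=t^2$: by the augur lemma (Lemma \ref{AJmuBound}), $AJ_{f,\gamma}(\ep)\gtrsim \mu(-\ep,\ep)/\bigl(\ep\,\gamma(\ep)\bigr)$, and for the point-mass measures $\mu=\sum \frac{\gamma(t_n)}{n^2}\delta_{t_n}$ (as in Theorem \ref{pittingtheorem}(3), with $t_n\to 0$ fast) the function is $\gamma$-regular while $AJ_{f,\gamma}$ blows up along $\ep=2t_n$. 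What $\gamma$-regularity does give is boundedness of $AJ$ for the larger augury $\gamma(Ct)/Ct$ (Theorem \ref{pittingtheorem}(2)), i.e.\ on a strictly smaller region; but to return from boundedness of $AJ_{g\circ f,\gamma(Ct)/Ct}$ to $\gamma$-regularity of $g\circ f$ via Theorem \ref{mainthm}(1) you would need $\gamma(t)/t$ to be a $\gamma$-augury, which is not among the hypotheses. Relatedly, you spend the hypothesis ``$\gamma$ is a $\gamma$-augury'' on justifying Theorem \ref{horotheorem}, which does not need it; its real role is exactly to repair this step, as in the paper: take the $\gamma$-augury $\lambda$ with $AJ_{f,\lambda}$ bounded furnished by Theorem \ref{mainthm}(1) and replace it by $\max(\lambda,\gamma)$ --- still a $\gamma$-augury because $\gamma$ is one, and $AJ_{f,\max(\lambda,\gamma)}$ is still bounded by Lemma \ref{AJmuBound} since enlarging $\lambda$ shrinks $\mu(-\ep,\ep)/(\ep\lambda(\ep))$ --- so that the amortization arcs $C_d$ lie inside $S^{\gamma}_{\tau}\cap B(\tau,1/D)$, where your pointwise comparison $J_{g\circ f}\asymp J_f$ holds; then Theorem \ref{mainthm}(1) applied to $g\circ f$ with this same augury finishes. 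With that repair your argument coincides with the paper's. (A smaller issue, which the paper's terse proof also elides, is that in the direction ``$g\circ f$ regular $\Rightarrow f$ regular'' one must know $f$ lands in $V$ on the relevant region before $f$ is known to be $\gamma$-regular; some additional remark, e.g.\ using local invertibility of $g$ at $f(\tau)$ together with the assumed nontangential limit, is needed there as well.)
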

\begin{proof}
	Without loss of generality $\tau=f(\tau)=g\circ f(\tau ) =0.$

	Note that if $f$ is $\gamma$-regular then there exists a $\gamma$-augury $\lambda$ such that 
	$AJ^\tau_{f,\lambda}(d)$ is bounded as $d \rightarrow 0$ by Theorem \ref{mainthm}, and that since $\gamma$ is a $\gamma$-augury
	we can assume $\lambda \geq \gamma.$
	
	There is a neighborhood $V\subset U$ of $f(\tau)$ such that $g(z) \approx bz.$
	Therefore, whenever $f(z)\in V,$ $bJ_{f}(z) \approx J_{g\circ f}(z)$ by an algebraic calculation.
	
	By Theorem \ref{horotheorem}, there exists $D$ and $C$ such that $f(S^{D\gamma(Ct)}_{\tau}\cap B(\tau, 1/D))\subset V.$
	Without loss of generality $D=C =1.$
	Now $S^{\lambda}_0 \subseteq S^{\gamma}_{0}$ so we are done, since for small
	$d,$ $bAJ^\tau_{f,\lambda}(z) \approx AJ^\tau_{g\circ f,\lambda}(z).$
\end{proof}

\begin{figure}[H]
	\caption{An illustration of Theorem \ref{pittingtheorem} and Theorem \ref{horotheorem}. A $t^3$-regular function is bounded on the orange region bounded by $y=|x|^3,$ which is overlayed with a periwinkle region where the
	amortized Julia quotient is bounded, with boundary curve $y=|x|^{2},$ which is itself overlayed by a blue region where the Julia quotient is properly bounded, with boundary curve $y=|x|^{3/2}.$}
\begin{tikzpicture}[scale=.7]
    \begin{axis}[
        domain=-1.25:1.25,
        xmin=-.5, xmax=.5,
        ymin=-.2, ymax=.5,
        samples=401,
        axis y line=none,
        axis x line=middle,
	ticks =none,
        legend style={at={(1,.7)},anchor=north west},
    	]
        \addplot+[mark=none, black, fill=Apricot] {abs(x)^3};
        \addplot+[mark=none, black, fill=Periwinkle] {x^2};
	\addplot+[mark=none, black, fill=Aquamarine] {abs(x)^(3/2)};
    \end{axis}
\end{tikzpicture}
\end{figure}
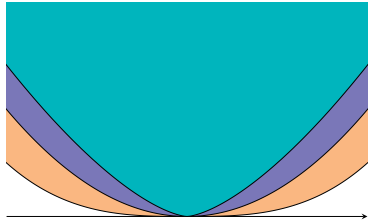

\begin{figure}[H]
	\caption{In the classical case it is known that the Julia quotient is bounded on Stolz regions, (in blue) and that the function is bounded on regions of quadratic approach (in orange). This corresponds to $t^2$-regularity. In this case, $t= \gamma(t)/t = \sqrt{\gamma(t)}$ so the automatic boundedness of the Julia quotient derived from Theorem \ref{pittingtheorem} is the same as that for the amortized Julia quotient.}
\begin{tikzpicture}[scale=.7]
    \begin{axis}[
        domain=-1.25:1.25,
        xmin=-.5, xmax=.5,
        ymin=-.2, ymax=.5,
        samples=401,
        axis y line=none,
        axis x line=middle,
	ticks=none,
        legend style={at={(1,.7)},anchor=north west},
    	]
        \addplot+[mark=none, black, fill=Apricot] {abs(x)^2};
				\addplot+[mark=none, thick, black, fill=Aquamarine] {abs(x)};
    \end{axis}
\end{tikzpicture}
\end{figure}
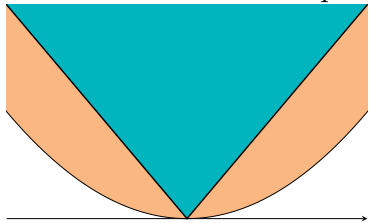

\section{The extended Bolotnikov-Kheifets theory using the fractional Laplacian} \label{iterated}
\subsection{Background}

In the case that $D = \Omega = \D$, the Julia quotient is infinitely subharmonic. That is, for analytic functions $\ph:\D \to \cc\D$, we have $\Delta^n J_\ph(z) \geq 0.$ The subharmonicity comes from the fact that
$A(z,w)=\frac{1-\ph(z)\cc{\ph(w)}}{1-z\cc{w}}$ is a positive kernel, and thus, on compact subsets of its domain, $A$ can be written as $A(z,w) = \sum f_k(z)\cc{f_k(w)}$ by Mercer's theorem. In Theorem 1.2 of \cite{bk06}, Bolotnikov and Kheifets established the exact relationship between nontangential boundedness of the positive quantity $\Delta^n J_\ph(z)$ and regularity to order $2n+1.$ We present their main result here in our language. (We note that regularity to order $2n+1$ was originally phrased as positivity condition for a certain matrix constructed from the Taylor coefficients of $\tilde\ph$ at $\tau.$)

\begin{theorem}[Bolotnokov, Kheifets \cite{bk06}]\label{bolo}
Let $\ph: \D \to \cc\D$ be an analytic function. Let $\tau$ be a point in $\T = \partial \D$. The following are equivalent:
\begin{enumerate}
    \item There exists a sequence $\la_n \subset \D$ tending to $\tau$ such that $\Delta^n J_\ph(\la_n)$ is bounded as $\la_n \to \tau$;
    \item for every sequence $\la_n$ in $\D$ tending to $\tau$ nontangentially, the sequence $\Delta^n  J_\ph(\la_n)$ is bounded as $\la_n \nt \tau$;
    \item the function $\ph$ is regular to order $2n+1$ at $\tau.$
\end{enumerate}
\end{theorem}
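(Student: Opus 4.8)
The plan is to move everything to the upper half plane by the Cayley transform and then read the statement off the Nevanlinna representation. After composing with a rotation of $\D$ so that $\tau$ is not the preimage of $\infty$, the Cayley map $c:\D \to \Pi$ is conformal on a neighborhood of $\tau$, and $\ph$ corresponds to a Pick function $f:\Pi \to \cc\Pi$; since $c$ is biholomorphic near $\tau$, regularity to order $2n+1$ is the same notion on $\D$ (model map a Blaschke product) as on $\Pi$ (model map a real polynomial), as in the discussion preceding Corollary~\ref{nevcor}. So it suffices to prove the upper half plane analogue of the theorem, together with the fact that ``$\Delta^n J$ bounded along some sequence tending to $\tau$'' and ``$\Delta^n J$ bounded on every Stolz region'' are each transported by the change of variables --- I will return to this last point, which is the only delicate one, at the end. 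The heart of the argument is an exact formula for $\Delta^n J_f$ in terms of the representing measure.

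First I would compute $\Delta^n J_f$. Using Lemma~\ref{splitupterms} to write $f = f_{trivial} + f_{red}$ with $f_{red} = \int_{[-1,1]} (t-z)^{-1}\,\dd\tilde\mu(t)$, Lemma~\ref{jadd} to split $J_f = J_{f_{trivial}} + J_{f_{red}}$, and observing (as in the proof of Lemma~\ref{vacuous}) that $J_{f_{trivial}}$ is real-analytic on $\D$, hence has all iterated Laplacians bounded near $\tau$, the question reduces to $f_{red}$. By Lemma~\ref{JQint}, $J_{f_{red}}(x+iy) = \int((t-x)^2+y^2)^{-1}\,\dd\tilde\mu(t)$, and since in the plane $\Delta\,|t-z|^{-2k} = 4k^2\,|t-z|^{-2k-2}$ for $t\in\R$, differentiating under the integral sign (legitimate at each fixed $z\in\Pi$ because $\tilde\mu$ is finite and $|t-z|\geq\IM z$) yields
\[
\Delta^n J_{f_{red}}(z) \;=\; 4^n (n!)^2 \int_{[-1,1]} \frac{\dd\tilde\mu(t)}{|t-z|^{2n+2}} \;\geq\; 0 ,
\]
which in passing re-derives the infinite subharmonicity of $J_f$ on $\Pi$.

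Next I would identify boundedness of this integral with an integrability condition on $\mu$. For $z$ in a Stolz region $S_{\tau,M}$ (take $\tau=0$) and $t\in\R$ one has $|t-z|\geq\tfrac{M}{2}|t|$ --- split into the cases $|t|\leq 2|z|$, where $|t-z|\geq\IM z\geq M|z|$, and $|t|>2|z|$, where $|t-\RE z|\geq|t|/2$ --- so on every Stolz region $\Delta^n J_{f_{red}}$ is dominated by a constant times $\int|t|^{-2n-2}\,\dd\tilde\mu(t)$, and finiteness of this integral forces nontangential boundedness of $\Delta^n J_f$. Conversely, if $\Delta^n J_f(\la_k)$ is bounded along some $\la_k\to 0$, then Fatou's lemma applied to $|t-\la_k|^{-2n-2}\to|t|^{-2n-2}$ (using that $\Delta^n J_{f_{trivial}}$ is bounded near $0$) gives $\int|t|^{-2n-2}\,\dd\tilde\mu(t)<\infty$. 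Since the tail of $\mu$ on $\R\setminus[-1,1]$ adds nothing new ($|t|^{-2n-2}\leq t^{-2}$ there, which is $\mu$-integrable by Theorem~\ref{nevrep}), this is exactly integrability of $(t-\tau)^{-2(n+1)}$ against $\mu$, which by Corollary~\ref{nevcor} is equivalent to $f$ being regular to order $2(n+1)-1 = 2n+1$. Thus on $\Pi$ all of (1), (2), (3) are equivalent to $\int|t-\tau|^{-2n-2}\,\dd\mu<\infty$.

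Finally I would transfer back to $\D$, which is where I expect the only friction. Writing $\zeta = c^{-1}(z)$, the Julia quotients are related by $J_f(z) = h(\zeta)\,J_\ph(\zeta)$ with $h$ real-analytic and bounded above and below near $\tau$ (conformal distortion of boundary distances; the continuity of $\ph$ at $\tau$ needed for this is classical Julia--Carath\'eodory, Theorem~\ref{jc}, applied to the fact --- already extracted above --- that the Julia quotient itself is then bounded along a sequence tending to $\tau$). The Laplacian is only conformally \emph{covariant}: iterating $\Delta(u\circ c^{-1}) = |(c^{-1})'|^2(\Delta u)\circ c^{-1}$ and expanding $\Delta^i(h J_\ph)$ by the product rule gives a triangular relation $\Delta^n_z J_f = \sum_{i\leq n} b_i(\zeta)\,\Delta^i_\zeta J_\ph$ with each $b_i$ real-analytic, bounded near $\tau$, and $b_n = |(c^{-1})'|^{2n}\,h$ bounded away from $0$. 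The lower-order terms could a priori cause cancellation, but they do not: from the computation on $\Pi$, boundedness of $\Delta^n J_f$ (along a sequence, or nontangentially) is equivalent to $\int|t-\tau|^{-2n-2}\,\dd\mu<\infty$, which forces $\int|t-\tau|^{-2i-2}\,\dd\mu<\infty$ for \emph{every} $i\leq n$ (the exponents are nested because $|t-\tau|\leq 1$ near $\tau$), hence simultaneous boundedness of $\Delta^i J_f$ for all $i\leq n$; a short induction on $i$ using the triangular relation then carries simultaneous boundedness of $\{\Delta^i J_\ph\}_{i\leq n}$ back and forth, matching (1) and (2) on $\D$ with their $\Pi$ counterparts and completing the proof. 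The main obstacle, then, is entirely bookkeeping: verifying that the family of integrability conditions in play is totally ordered, so that no information leaks through the lower-order correction terms when the iterated Laplacian is pushed through the conformal change of coordinates.
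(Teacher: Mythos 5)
The paper does not actually prove Theorem \ref{bolo}: it is quoted from \cite{bk06} as background (``we present their main result here in our language''), so there is no internal proof to compare against. Judged on its own terms, the half-plane core of your argument is sound and is essentially the machinery the paper develops in Section \ref{iterated}: your formula $\Delta^n J_{f_{red}}(z)=4^n(n!)^2\int_{[-1,1]}|t-z|^{-2n-2}\,\dd\tilde\mu(t)$ is Lemma \ref{iterJQ} with integer $s$ (up to the paper's normalization of $\Delta$), the Fatou argument along an arbitrary, possibly tangential, sequence, the Stolz-region estimate $|t-z|\geq\tfrac{M}{2}|t|$, the reduction via Lemmas \ref{splitupterms} and \ref{vacuous}, and the identification of $\int|t-\tau|^{-2n-2}\,\dd\mu<\infty$ with regularity to order $2n+1$ via Corollary \ref{nevcor} are all correct.

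The genuine gap is precisely the step you dismiss as bookkeeping: the conformal transfer back to $\D$. The claimed triangular relation $\Delta_z^n J_f=\sum_{i\leq n}b_i(\zeta)\,\Delta_\zeta^i J_\ph$ is not an identity. Already for $n=1$ the product rule gives $\Delta(hJ_\ph)=h\,\Delta J_\ph+2\nabla h\cdot\nabla J_\ph+J_\ph\,\Delta h$, and the cross term $\nabla h\cdot\nabla J_\ph$ (and, upon iteration, all mixed partials of $J_\ph$ up to order $2n$) is neither expressible through nor controlled by the quantities $\Delta^i J_\ph$ alone; a relation of your claimed form valid with coefficients depending only on the point would force $\nabla h\equiv 0$, and nothing about $J_\ph$ makes the omitted terms harmless. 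Compounding this, the multiplier is $h(\zeta)=\frac{|1-\zeta|^2}{|1-\ph(\zeta)|^2}\cdot\frac{1+|\ph(\zeta)|}{1+|\zeta|}$ (after suitable rotations): the factor $1+|\ph|$ is not real-analytic at zeros of $\ph$, which may accumulate at $\tau$, and boundedness of $1/|1-\ph(\zeta)|$ near $\tau$ along the arbitrary sequences permitted in (1) is exactly the sort of boundary control the theorem is meant to deliver --- your appeal to Theorem \ref{jc} to secure it uses boundedness of $J_\ph$ along a sequence, which in your scheme is only extracted from the half-plane computation \emph{after} the transfer has been performed, a circularity. So the disk theorem does not follow from the half-plane analogue by the argument given; one must either work directly on $\D$ with a Herglotz or positive-kernel representation (the route of \cite{bk06}), or find a genuine way around the conformal covariance of $\Delta^n$. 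The paper itself flags this disk/half-plane passage as a difficulty it has not resolved (see its commentary on the Schur class), which is a good indication that this step is not mere bookkeeping.
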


 Unlike the Laplacian approach taken by Bolotnikov and Kheifets on the disk, our main result, Theorem \ref{mainthm}, characterizes boundary regularity entirely in terms of the classical Julia quotient, with the trade-off that we must work on larger than nontangential sets.

\subsection{The extended Bolotnikov-Kheifets theory}
	Consider the space $C_{r}$ of functions of the form
		$$\int \frac{1}{|z-t|^{2r}} d\mu,$$
	where $\mu$ is a finite signed Borel measure.
	The \dfn{fractional Laplacian} $\Delta^s: C_r \rightarrow C_{r+s}$
		is defined to be
		$$\Delta^s \int \frac{1}{|z-t|^{2r}} d\mu = \frac{\Gamma(r+s)^2}{\Gamma(r)^2}\int \frac{1}{|z-t|^{2(r+s)}} d\mu.$$
	We define $\Delta^s C = 0$ for any constant $C.$
	(Here, we are using the fact that $\Delta = \frac{\partial^2 }{\partial z \partial \cc z}$
	and that  $|z-t|^{2r} = (z-t)^r\cc{(z-t)^r}$, for which the behavior
	on $s\in \mathbb{N}$ justifies the definition on non-integral $s.$)
	
	The Julia quotient of an analytic function 
	 $f: \Pi \rightarrow \overline{\Pi}$ can be shown to be in the domain of
	  $\Delta^s.$ The following gives the fractional Laplacian of the Julia quotient for an analytic function of the form
	$f = \int_{[-1,1]} \frac{1}{t-z} d\mu(t)$ for some finite positive measure $\mu.$
\begin{lemma}\label{iterJQ}
Let $f: \Pi \rightarrow \overline{\Pi}$ be an analytic function of the form
	$f = \int_{[-1,1]} \frac{1}{t-z} d\mu(t)$ for some finite positive measure $\mu.$ Then
	\begin{equation} 
		\lapl^s J_{f}(x+iy) = \Gamma(s)^2 \int_{-1}^{1} \frac{1}{\left((x-t)^2 +y^2\right)^{s+1}}\,d\mu(t).
	\end{equation}
\end{lemma}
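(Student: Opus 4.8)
The plan is to compute $\lapl^s J_f$ directly from the integral formula for $J_f$ already established in Lemma \ref{JQint}. By that lemma, for $f = \int_{[-1,1]} \frac{1}{t-z}\, d\mu(t)$ we have
\[
J_f(x+iy) = \int_{-1}^{1} \frac{1}{(x-t)^2 + y^2}\, d\mu(t) = \int_{-1}^{1} \frac{1}{|z-t|^2}\, d\mu(t),
\]
so $J_f \in C_1$ in the notation introduced above, with representing measure $\mu$ and exponent $r = 1$. Then I would simply apply the definition of the fractional Laplacian $\lapl^s : C_1 \to C_{1+s}$ to this function, which gives
\[
\lapl^s J_f(x+iy) = \frac{\Gamma(1+s)^2}{\Gamma(1)^2} \int_{-1}^{1} \frac{1}{|z-t|^{2(1+s)}}\, d\mu(t) = \Gamma(1+s)^2 \int_{-1}^{1} \frac{1}{\left((x-t)^2 + y^2\right)^{s+1}}\, d\mu(t).
\]
Since $\Gamma(1+s) = s\,\Gamma(s)$, we have $\Gamma(1+s)^2 = s^2\,\Gamma(s)^2$, which matches the claimed formula up to interpreting the constant — I would double-check whether the intended statement carries $\Gamma(s+1)^2$ or $\Gamma(s)^2$; with $r=1$ the definition as written produces $\Gamma(s+1)^2$, so I would write the proof to land on that and flag any discrepancy with the displayed $\Gamma(s)^2$ as a typographical normalization (e.g. absorbing the $s^2$ or using the alternate normalization $\Gamma(r+s)^2/\Gamma(r+1)^2$, consistent with the classical $\lapl$ on the disk being the genuine Laplacian when $s=1$).

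The only substantive point requiring care is that applying $\lapl^s$ term-by-term under the integral sign is legitimate — i.e. that $J_f$ genuinely lies in $C_1$ with the honest representing measure $\mu$, rather than merely agreeing pointwise with such a function, and that the fractional Laplacian is well-defined (independent of the representation) on this class. The representation $J_f = \int |z-t|^{-2}\, d\mu$ is literally the output of Lemma \ref{JQint}, so membership in $C_1$ is immediate; well-definedness of $\lapl^s$ on $C_r$ I would take as part of the setup already granted in the preceding paragraph (it is the content of the parenthetical remark about $\lapl = \partial^2/\partial z\, \partial\bar z$ justifying the non-integral definition).

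I expect the main obstacle — such as it is — to be purely bookkeeping with the Gamma-function constants and confirming the normalization convention, rather than anything analytic; the computation itself is a one-line substitution into the definition. If a more self-contained argument is wanted, I would alternatively verify the formula for $s \in \mathbb{N}$ by induction, differentiating $\left((x-t)^2+y^2\right)^{-(k+1)}$ under the integral (justified by local uniform convergence since $\mu$ is finite and supported in $[-1,1]$, so $|z-t|$ is bounded below on compact subsets of $\Pi$), checking that $\Delta\, |z-t|^{-2k} = k^2\, |z-t|^{-2(k+1)}$ via the identity $\Delta\, (z-t)^{-k}\overline{(z-t)^{-k}} = k^2 (z-t)^{-(k+1)}\overline{(z-t)^{-(k+1)}}$, and then invoking the stated extension of this pattern from $\mathbb{N}$ to arbitrary $s > 0$ as the definition of $\lapl^s$.
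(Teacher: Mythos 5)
Your proof is correct and is exactly the intended argument: Lemma \ref{JQint} exhibits $J_f$ as the element $\int_{[-1,1]} |z-t|^{-2}\,d\mu(t)$ of $C_1$, and the formula then follows by applying the definition of $\Delta^s$ with $r=1$ (with the integer case, computed by differentiating under the integral via $\partial_z\partial_{\bar z}\,|z-t|^{-2k} = k^2|z-t|^{-2(k+1)}$, justifying the normalization). You are also right about the constant: the definition as stated produces $\Gamma(s+1)^2$ rather than the displayed $\Gamma(s)^2$ (these coincide only when $s=1$), so the mismatch is a normalization slip in the statement of the lemma, not a gap in your argument.
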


We obtain the following estimate, similar to what we obtained before in the ordinary case in Theorem \ref{AJmuBound}.
\begin{lemma}
Fix $s\geq 0.$
Let $\la$ be $o(t).$
Let $f$ of the form $\int_{[-1,1]} \frac{1}{t-z}\,d\mu(t)$.
Then, 
	\begin{equation}
		\lapl^s AJ_{f,\lambda}^0(\eps) = \frac{\Gamma(s)^2}{2\eps} \int_{\eps}^{\eps} \int_{-1}^{1} \frac{1}{\left((x-t)^2 +y^2\right)^{n+1}}\,d\mu(t)\,dx.
	\end{equation}
	Moreover, there exist constants $L_1, L_2, C$ such that
	\begin{equation}\label{BKversion}
		L_1 \,\cdot \,\frac{\mu(-\eps, \eps)}{\eps \lambda^{2s+1}(\eps)}
			\leq \lapl^s AJ_{f,\lambda}^0\leq
			L_2 \,\cdot \frac{\mu(-\eps, \eps)}{\eps \lambda^{2s+1}(\eps)} +C\int_{[-1,1]} \frac{1}{t^{2s+2}} d\mu(t).
	\end{equation}
\end{lemma}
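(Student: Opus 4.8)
The plan is to mimic the proof of the augur lemma (Lemma \ref{AJmuBound}) almost verbatim, substituting the fractional-Laplacian integrand $\frac{1}{((x-t)^2+y^2)^{s+1}}$ for the Poisson-type kernel $\frac{1}{(x-t)^2+y^2}$. First I would establish the exact formula for $\lapl^s AJ_{f,\lambda}^0(\eps)$: this is immediate from Lemma \ref{iterJQ} together with the definition \eqref{defAJ} of the amortized Julia quotient, after linearity of $\lapl^s$ lets us pull it inside the arclength average over $C_d$ (parametrized, for $\tau=0$ and with $y=\lambda(\eps)$ fixed, as $x$ ranging over $[-\eps,\eps]$). So $\lapl^s AJ_{f,\lambda}^0(\eps) = \frac{\Gamma(s)^2}{2\eps}\int_{-\eps}^{\eps}\int_{-1}^{1} \frac{d\mu(t)\,dx}{((x-t)^2+y^2)^{s+1}}$. (The printed bounds of the $x$-integral, ``$\int_\eps^\eps$'', are evidently a typo for $\int_{-\eps}^\eps$.)

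For the lower bound I would restrict the inner $t$-integral to $t\in(-\eps,\eps)$ and the $x$-integral to $[-\eps,\eps]$, swap the order of integration by Tonelli, and bound the resulting $x$-integral $\int_{-\eps}^{\eps}\frac{dx}{((x-t)^2+y^2)^{s+1}}$ from below. Here the cleanest route is the same observation used in Lemma \ref{AJmuBound}: for $t\in(-\eps,\eps)$ the interval $[-\eps,\eps]$ contains a subinterval of length $\eps$ lying within distance $2\eps$ of $t$ on which $(x-t)^2+y^2 \le 4\eps^2 + y^2$; since $y=\lambda(\eps)=o(\eps)$, for small $\eps$ this is $\le 5\eps^2$, giving $\int_{-\eps}^{\eps}\frac{dx}{((x-t)^2+y^2)^{s+1}} \ge \eps\cdot(5\eps^2)^{-(s+1)}$. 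Actually, to get the sharp $\lambda^{2s+1}$ scaling one should instead keep the ball of radius $\sim y$ around $t$: on $|x-t|\le y$ (which lies in $[-2\eps,2\eps]$, hence one can shrink to $[-\eps,\eps]$ up to a constant) the integrand is $\ge (2y^2)^{-(s+1)}$, contributing $\gtrsim y\cdot y^{-2(s+1)} = y^{-2s-1}$; this is the term that produces $\frac{1}{\eps\lambda^{2s+1}(\eps)}$ after dividing by $2\eps$. Either way, integrating over $t\in(-\eps,\eps)$ against $\mu$ yields $\lapl^s AJ_{f,\lambda}^0(\eps)\ge L_1\frac{\mu(-\eps,\eps)}{\eps\lambda^{2s+1}(\eps)}$.

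For the upper bound I would split $\int_{-1}^1 = \int_{-2\eps}^{2\eps} + \int_{(-2\eps,2\eps)^{\mathsf c}}$ exactly as in Lemma \ref{AJmuBound}. On the far piece, $|x-t|\ge t/2$ for $x\in(-\eps,\eps)$, so the integrand is $\le (t/2)^{-2(s+1)} = 4^{s+1}t^{-2s-2}$, giving the tail term $C\int_{[-1,1]}\frac{d\mu(t)}{t^{2s+2}}$ after averaging in $x$. On the near piece $t\in(-2\eps,2\eps)$ I would swap order and bound $\int_{-\eps}^{\eps}\frac{dx}{((x-t)^2+y^2)^{s+1}}$ from above: for $s>0$ this is $\le \int_{\R}\frac{dx}{((x-t)^2+y^2)^{s+1}} = c_s\, y^{-2s-1}$ by a standard beta-integral (scale $x-t = yu$), and for $s=0$ it is $\le \pi/y = \pi y^{-1}$ as in the original lemma; in all cases it is $\le C'_s\,\lambda(\eps)^{-2s-1}$. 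Integrating over $t\in(-2\eps,2\eps)$ against $\mu$ and dividing by $2\eps$ gives $\le L_2\frac{\mu(-2\eps,2\eps)}{\eps\lambda^{2s+1}(\eps)}$; since $\mu(-2\eps,2\eps)\le\mu(-2\eps,2\eps)$ this matches the claimed form (one may relabel $\mu(-\eps,\eps)$ as $\mu(-2\eps,2\eps)$ exactly as in Lemma \ref{AJmuBound}, and the stated lemma is harmless about the factor of $2$).

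The only genuine obstacle, and it is mild, is verifying that the one-dimensional integral $\int_\R \frac{du}{(u^2+1)^{s+1}}$ converges for the relevant range of $s$ — it does precisely when $2(s+1)>1$, i.e.\ $s>-\tfrac12$, which holds since $s\ge 0$ — and tracking the $\Gamma$-factors and absolute constants so they get absorbed into $L_1,L_2,C$; none of this is delicate. The structural point is simply that the fractional-Laplacian kernel scales like $y^{-2(s+1)}$ instead of $y^{-2}$, which shifts the power of $\lambda$ from $1$ to $2s+1$ and the tail exponent from $2$ to $2s+2$, and everything else in the proof of Lemma \ref{AJmuBound} goes through unchanged.
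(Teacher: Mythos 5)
Your proposal is correct and follows essentially the route the paper intends: its proof is stated only as ``the same arguments as Lemmas \ref{JQint} and \ref{AJmuBound} with a change of order of integration and the obvious trigonometric substitution,'' and your splitting at $\pm 2\eps$, the far-field bound $|x-t|\ge |t|/2$, and the near-field kernel estimate scaling like $\lambda(\eps)^{-2s-1}$ are exactly that argument, with your substitution $x-t=\lambda(\eps)u$ (a beta integral) playing the role of the paper's trigonometric substitution. Your side remarks are also apt: the displayed formula's $\int_\eps^\eps$ and exponent $n+1$ are typos for $\int_{-\eps}^\eps$ and $s+1$, and the upper bound is naturally obtained with $\mu(-2\eps,2\eps)$ as in Lemma \ref{AJmuBound}, the constants being absorbed into $L_1, L_2, C$.
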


The proof essentially follows the same arguments as the proofs of Lemma \ref{JQint} and Lemma \ref{AJmuBound} by
changing the order of integration and doing the obvious trigonometric substitution.

So we arrive at the following theorem, along the lines of Theorem \ref{mainthm},
in analogy with Bolotnikov and Kheifets \cite{bk06}. (Note the algebraic similarity of \eqref{BKversion} to \eqref{ordinarybound}.)
\begin{theorem}
Let $\gamma$ be  $O(t^{2s+2}).$
An analytic function
$f: \Pi \rightarrow \cc \Pi$
is $\gamma$-regular
if and only if there exists 
a $\gamma$-augury $\lambda^{2s+1}$ such that $\la(t)$ is $o(t)$ and 
$\Delta^s AJ_{f,\la}^\tau(d)$ is bounded as $d \rightarrow 0.$
\end{theorem}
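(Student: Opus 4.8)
The plan is to mirror the proof of Theorem \ref{mainthmPrime}, feeding the fractional analogue \eqref{BKversion} of the augur estimate \eqref{ordinarybound} into the same layer-cake machinery, with $\la(t)$ replaced throughout by $\la(t)^{2s+1}$ and $t^2$ replaced by $t^{2s+2}$. As usual we reduce to $\tau=0$.

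First I would reduce to a Cauchy transform. Write $f=f_{trivial}+f_{red}$ as in Lemma \ref{splitupterms}, so that $f_{red}=\int_{[-1,1]}\frac{1}{t-z}\,d\mu(t)$ (relabelling $\mu\big|_{[-1,1]}$ as $\mu$) and $J_{f_{trivial}}(x+iy)=b+\int_{\R\setminus[-1,1]}\frac{1}{|z-t|^2}\,d\mu(t)$ lies in $C_1$ up to an additive constant, which $\Delta^s$ annihilates. By linearity of $\Delta^s$ on Julia quotients and of amortization (Lemma \ref{ajadd}), $\Delta^s AJ^0_{f,\la}=\Delta^s AJ^0_{f_{trivial},\la}+\Delta^s AJ^0_{f_{red},\la}$; and by Lemma \ref{iterJQ} applied to $f_{trivial}$, $\Delta^s J_{f_{trivial}}(z)$ is a constant multiple of $\int_{\R\setminus[-1,1]}|z-t|^{-2(1+s)}\,d\mu(t)$, which for $z$ on an arc $C_d$ with $d$ small is bounded by a constant multiple of the finite mass $\mu(\R\setminus[-1,1])$ because $|z-t|$ is bounded below on $C_d\times(\R\setminus[-1,1])$. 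Hence $\Delta^s AJ^0_{f_{trivial},\la}(d)$ is bounded as $d\to0$ --- the $\Delta^s$-analogue of Lemmas \ref{vacuous} and \ref{splitupterms}(A) --- and together with Lemma \ref{splitupterms}(B) this lets us assume $f=\int_{[-1,1]}\frac{1}{t-z}\,d\mu(t)$ with $\mu$ finite, positive and supported in $[-1,1]$ (discarding, via Lemma \ref{vacuous}, the trivial case $0\notin\operatorname{supp}\mu$, so that $\mu(-\eps,\eps)>0$ for every $\eps$).

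Now run the layer-cake argument. As in the proof of Theorem \ref{mainthmPrime}, monotonicity of $\gamma$ and the layer-cake principle make $\gamma$-regularity of $f$ equivalent to finiteness of $\int_0^1 \frac{\mu(-t,t)}{\gamma(Ct)^2}\,\dd\gamma(Ct)$ for some $C>0$. For the implication ($\Leftarrow$): given a $\gamma$-augury $\la^{2s+1}$ with $\la$ itself $o(t)$ and with $\Delta^s AJ^0_{f,\la}$ bounded, the lower bound in \eqref{BKversion} forces $\mu(-\eps,\eps)\lesssim\eps\,\la(\eps)^{2s+1}$, and inserting this into the layer-cake integral (after adjusting the constant $C$ to match the one in the augury condition) shows $\int_0^1 \frac{\mu(-t,t)}{\gamma(C't)^2}\,\dd\gamma(C't)<\infty$ for a suitable $C'$, so $f$ is $\gamma$-regular. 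For ($\Rightarrow$): given $f$ is $\gamma$-regular, set $\la(t):=\bigl(\mu(-2t,2t)/t\bigr)^{1/(2s+1)}$, so that $\eps\,\la(\eps)^{2s+1}=\mu(-2\eps,2\eps)$ and the first term of the upper bound in \eqref{BKversion} is $\le1$; since $\gamma$ is $O(t^{2s+2})$, $\gamma$-regularity also yields $\int_{[-1,1]}t^{-2s-2}\,d\mu<\infty$, bounding the remaining term, so $\Delta^s AJ^0_{f,\la}(\eps)\le L_2+C\int_{[-1,1]}t^{-2s-2}\,d\mu$ is bounded. The same layer-cake computation as in Theorem \ref{mainthmPrime}, now with $t^{-2s-2}$, shows that $\la^{2s+1}(t)=\mu(-2t,2t)/t$ makes $\frac{t\,\la^{2s+1}(Ct)\,\dd\gamma(t)}{\gamma(t)^2}$ integrable for some $C$, i.e. $\la^{2s+1}$ is a $\gamma$-augury. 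Finally $\la$ is $o(t)$, i.e. $\mu(-2t,2t)/t^{2s+2}\to0$: this is the $t^{-2s-2}$-analogue of Lemma \ref{isgammaaugury}, proved by the identical Riemann-sum argument, since a subsequence $t_n\to0$ with $t_n/t_{n+1}>2$ and $\mu(-t_n,t_n)/t_n^{2s+2}$ bounded below would force $\int_0^1 \mu(-2t,2t)\,t^{-2s-3}\,dt$, comparable to $\int_{-1}^1 t^{-2s-2}\,d\mu$, to diverge.

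The constant bookkeeping in the substitutions and the verbatim adaptation of the layer-cake identity are routine. The step I expect to be the genuine obstacle is the $f_{trivial}$ reduction under the fractional Laplacian: one must be careful that $J_f$ (hence each summand) really lies in the domain $C_1$ on which $\Delta^s$ is defined, that $\Delta^s$ acts additively across the decomposition, and that $\Delta^s J_{f_{trivial}}$ is uniformly bounded along the arcs $C_d$ as $d\to0$ --- none of which was needed when $s=0$, since there $\Delta^0$ is the identity and Lemma \ref{vacuous} applies directly. A secondary technical point is verifying the estimate \eqref{BKversion} itself (stated only with a proof sketch above); I would supply it by repeating the proofs of Lemmas \ref{JQint} and \ref{AJmuBound} with the kernel $\bigl((x-t)^2+y^2\bigr)^{-(s+1)}$ in place of $\bigl((x-t)^2+y^2\bigr)^{-1}$ and the substitution $x-t=y\tan\theta$.
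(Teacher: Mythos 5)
Your proposal is correct and follows essentially the same route the paper intends: it feeds the fractional estimate \eqref{BKversion} into the layer-cake argument of Theorem \ref{mainthmPrime}, with $\la^{2s+1}(t)=\mu(-2t,2t)/t$ in the converse direction and the $t^{-2s-2}$ analogue of Lemma \ref{isgammaaugury}, which is exactly the adaptation the paper leaves implicit. Your added care with the $f_{trivial}$ reduction under $\Delta^s$ and with the (likely typographical) $\mu(-\eps,\eps)$ versus $\mu(-2\eps,2\eps)$ in the upper bound of \eqref{BKversion} is consistent with, and slightly more explicit than, the paper's treatment.
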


\section{Commentary}

We point out some related open problems and connections to other work. 
\subsection{Schur class}
In \cite{bk06}, Bolotnikov and Kheifets get results on the disk about members of the Schur class. Thus, there should be obvious interest in transforming Theorem \ref{mainthm} back to an analogous theorem on the disk. Attempting to go directly through the conformal map from $\Pi$ to $\mathbb{D}$ causes some amount of confusion due to the polar singularity of such a map not playing well with amortization, though we have partial results in this direction. There is also a body of followup and related results in Bolotnikov-Kheifets theory that may have analogues in our setting (e.g. \cite{MR2154356, MR2448987, MR2522782, MR3495433, MR2390675}).
\subsection{General kernels}
What is important in our analysis of the Julia quotient is essentially that self-maps of the upper half plane
are of the form $\int \frac{1}{t-z}\,d\mu(t).$ That is, such functions can be viewed as the convolution of 
$\frac{1}{z}$ with a measure $\mu.$ Moreover, the Julia quotient, in this case, is given as the convolution (in $x$)
of $\frac{1}{x^2+y^2}$ with $\mu.$ Similarly, the extended Bolotnikov-Kheifets theory relies on the fractional Laplacian of the Julia quotient being given by the convolution
of $\frac{1}{(x^2+y^2)^n}$ with $\mu.$
In general, one imagines that functions arising from convolutions of the form $\int k(t-z)\mu(t)$ for nice functions $k$ should have an
analogous theory of $\gamma$-regularity.

\subsection{Analysis of Cauchy transforms}
The notion of $\gamma$-regularity allows for finer resolution in analysis of the Cauchy transform of a measure $\mu$, $\int \frac{1}{t-z}\dd \mu(t).$ In turn, that could lead to insight into finer aspects of unbounded self-adjoint operators via the spectral theorem. (Evaluation of various Julia type quotients and nontangential limits are important in perturbation theory for self-adjoint operators, for example. See e.g. \cite{liawtreil2018} and references contained therein for a good sampling. A survey of the theory of the Cauchy transform and recent developments in perturbation theory can be found in \cite{rosscimamath}.)
\subsection{Several variables}
A large body of work generalizes the classical Julia-Carath\'eodory theory to analytic functions in two or more variables in both the commutative and free noncommutative settings (e.g. \cite{pascoePEMS, mprevisit, amy10a, amy10b, aty12, ptdpick, tdopmat}) with methods reliant on non-tangential approach. Generalizing Theorem \ref{mainthm} to several variables would open these questions up to finer investigation of boundary regularity. Resolvent methods as in \cite{aty12} may present difficulty in integration and thus establishing some fundamental lemmas, such as the augur lemma. Integral methods, such as those in \cite{lugarnedic}, may prove more tractable, although the understanding of the relationship between the constructed measures and the geometry of the function would need to be developed.

\subsubsection{Free probability}
Of specific interest in several variables are applications to noncommutative function theory and free probability. The connections with this work to the extensive body of seemingly related results in moment theory, while considered in a preliminary fashion here in Section \ref{momentSection}, should be explored more deeply. In the free case, a combination of the results about the representation of free Pick functions as certain Cauchy transforms in \cite{pastdcauchy} and the Hankel vector moment sequence approach in \cite{pascoePEMS} may provide an avenue to a foundation for moment determinacy theory in operator-valued free probability. 
%\subsection{Geometric moment determinacy}

\section{Acknowledgments}

The authors would like to thank Constanze Liaw for pointing out some helpful connections.

\bibliography{references}

\begin{thebibliography}{10}

\bibitem{ab98}
M.~Abate.
\newblock The {Julia-Wolff-Carath\'eodory} theorem in polydisks.
\newblock {\em J. Anal. Math.}, 74:275--306, 1998.

\bibitem{amchvms}
J.~Agler and J.E. {\mc}Carthy.
\newblock Hankel vector moment sequences and the non-tangential regularity at
  infinity of two variable {P}ick functions.
\newblock {\em Trans. Amer. Math. Soc}, 366(3):1379--1411, 2014.

\bibitem{amy10b}
J.~Agler, J.E. M\raise.45ex\hbox{c}Carthy, and N.J. Young.
\newblock Facial behavior of analytic functions on the bidisk.
\newblock {\em Bull. London Math. Soc.}, 43:478--494, 2011.

\bibitem{amy10a}
J.~Agler, J.E. M\raise.45ex\hbox{c}Carthy, and N.J. Young.
\newblock A {Carath\'eodory} theorem for the bidisk using {Hilbert} space
  methods.
\newblock {\em Math. Ann.}, 352:581--624, 2012.

\bibitem{aty12}
J.~Agler, R.~Tully-Doyle, and N.J. Young.
\newblock Boundary behavior of analytic functions of two variables via
  generalized models.
\newblock {\em Indag. Math. (N.S.)}, 23:995--1027, 2012.

\bibitem{akhiezer}
N.~I. Akhiezer.
\newblock {\em The classical moment problem and some related problems in
  analysis}.
\newblock Oliver and Boyd, London, 1965.

\bibitem{MR2154356}
J.~A. Ball and V.~Bolotnikov.
\newblock Nevanlinna-{P}ick interpolation for {S}chur-{A}gler class functions
  on domains with matrix polynomial defining function in {$\Bbb C^n$}.
\newblock {\em New York J. Math.}, 11:247--290, 2005.

\bibitem{bps1}
K.~Bickel, J.~E. Pascoe, and A.~Sola.
\newblock Derivatives of rational inner functions: geometry of singularities
  and integrability at the boundary.
\newblock {\em Proc. London Math. Soc.}, 116:281--329, 2018.

\bibitem{MR2522782}
V.~Bolotnikov.
\newblock Boundary rigidity for some classes of meromorphic functions.
\newblock {\em Oper. Matrices}, 3(2):283--301, 2009.

\bibitem{MR3495433}
V~Bolotnikov, M.~Elin, and D.~Shoikhet.
\newblock Boundary asymptotic expansions of analytic self-mappings of the unit
  disk.
\newblock {\em Ann. Sc. Norm. Super. Pisa Cl. Sci. (5)}, 15:399--433, 2016.

\bibitem{bk06}
V.~Bolotnikov and A.~Kheifets.
\newblock A higher order analogue of the {C}arath{\'e}odory-{J}ulia theorem.
\newblock {\em J. Func. Anal}, 237:350 -- 371, 2006.

\bibitem{MR2448987}
V.~Bolotnikov and A.~Kheifets.
\newblock Carath\'eodory-{J}ulia type theorems for operator valued {S}chur
  functions.
\newblock {\em J. Anal. Math.}, 106:237--270, 2008.

\bibitem{car29}
C.~Carath\'eodory.
\newblock {\"Uber die Winkelderivierten von beschra\"ankten analytischen
  Funktionen}.
\newblock {\em Sitzunber. Preuss. Akad. Wiss.}, pages 39--52, 1929.

\bibitem{mprevisit}
J.E. {\mc}Carthy and J.~E. Pascoe.
\newblock The {J}ulia-{Carath\'eodory} theorem revisited.
\newblock {\em Acta. Math. Sci. (Szeged)}, 83:165--175, 2018.

\bibitem{MR2390675}
E.~Fricain and J~Mashreghi.
\newblock Boundary behavior of functions in the de {B}ranges-{R}ovnyak spaces.
\newblock {\em Complex Anal. Oper. Theory}, 2(1):87--97, 2008.

\bibitem{jaf93}
F.~Jafari.
\newblock Angular derivatives in polydisks.
\newblock {\em Indian J. Math.}, 35:197--212, 1993.

\bibitem{ju20}
G.~Julia.
\newblock Extension nouvelle d'un lemme de {Schwarz}.
\newblock {\em Acta Math.}, 42:349--355, 1920.

\bibitem{ks17}
G.~A. Karagulyan and M.~H. Safaryan.
\newblock On a theorem of {L}ittlewood.
\newblock {\em Hokkaido Math. J.}, 46(1), 2017.

\bibitem{lax02}
P.~Lax.
\newblock {\em Functional Analysis}.
\newblock Wiley, 2002.

\bibitem{lt2016}
C.~Liaw and S.~Treil.
\newblock Clark model in the general situation.
\newblock {\em J. Anal. Math}, 130:287--328, 2016.

\bibitem{lt2017}
C.~Liaw and S.~Treil.
\newblock Singular integrals, rank one perturbations and {C}lark model in
  general situation.
\newblock In {\em Harmonic Analysis, Partial Differential Equations, Banach
  Spaces, and Operator Theory (Volume 2)}. Springer, 2017.

\bibitem{liawtreil2018}
Constanze Liaw and Sergei Treil.
\newblock Matrix measures and finite rank perturbations of self-adjoint
  operators.
\newblock 29 pages. arXiv:1806.08856, 2018.

\bibitem{littlewood1927}
J.~E. Littlewood.
\newblock On a theorem of {F}atou.
\newblock {\em J. London Math. Soc.}, 2:172--176, 1927.

\bibitem{nrs1982}
A.~Nagel, W.~Rudin, and J.~Shapiro.
\newblock Tangential boundary behavior of functions in {D}irichlet-type spaces.
\newblock {\em Ann. of Math.}, 116(2):331--360, 1982.

\bibitem{ns84}
A.~Nagel and E.~M. Stein.
\newblock On certain maximal functions and approach regions.
\newblock {\em Adv. Math.}, 54:83--106, 1984.

\bibitem{lugarnedic}
M.~Nedic and A.~Luger.
\newblock A characterization of {H}erglotz-{N}evanlinna functions.
\newblock {\em Ark. Mat.}, 55:199--216, 2017.

\bibitem{nev22}
R.~Nevanlinna.
\newblock {\ Asymptotisch Entwicklungen beschr\"ankter Funktionen und das
  Stieltjessche Momentproblem}.
\newblock {\em Ann. Acad. Sci. Fenn. Ser. A}, 18, 1922.

\bibitem{nikolski1}
N.~Nikolski.
\newblock {\em Operators, functions, and systems: an easy reading, Vol. 1},
  volume~92 of {\em Mathematical Surveys and Monographs}.
\newblock American Mathematical Society, Providence, Rhode Island, 2002.

\bibitem{pascoePEMS}
J.~E. Pascoe.
\newblock An inductive {Julia-Carath\'eodory Theorem} for {P}ick functions in
  two variables.
\newblock {\em Proc. Edin. Math. Soc.}, 61(3):647--660, 2018.

\bibitem{ptdpick}
J.~E. Pascoe and R.~Tully-Doyle.
\newblock Free {P}ick functions: representations, asymptotic behavior and
  matrix monotonicity.
\newblock {\em J. Func. Anal.}, 273(1):283--328, 2017.

\bibitem{pastdcauchy}
J.~E. Pascoe and R.~Tully-Doyle.
\newblock Cauchy transforms arising from homomorphic conditional expectations
  parametrize free pick functions.
\newblock in revision. arXiv: 1607.06737, 2018.

\bibitem{reedsimon}
M.~Reed and B.~Simon.
\newblock {\em {Methods of Modern Mathematical Physics, Vol. 2: Fourier
  Analysis and Self-Adjointness}}.
\newblock Academic Press, Inc., San Diego, 1975.

\bibitem{rosscimamath}
W.~T. Ross, J.~A. Cima, and A.~L. Matheson.
\newblock {\em The Cauchy Transform}.
\newblock American Math Society, Providence, 2006.

\bibitem{tdopmat}
R.~Tully-Doyle.
\newblock Analytic functions on the bidisk at boundary singularities via
  {Hilbert} space methods.
\newblock {\em Oper. Matrices}, 11(1):55--70, 2017.

\bibitem{zygmund49}
A.~Zygmund.
\newblock On a theorem of {L}ittlewood.
\newblock {\em Summa Brasil. Math.}, 2:1 -- 7, 1949.

\end{thebibliography}
\bibliographystyle{plain}

\printindex

\end{document}